\newtheorem{thm}{Theorem}[section]
\newtheorem{lemma}[thm]{Lemma}
\newtheorem{coro}[thm]{Corollary}
\newtheorem{prop}[thm]{Proposition}
\newtheorem{remark}[thm]{Remark}
\newtheorem{definition}[thm]{Definition}
\newtheorem{example}[thm]{Example}
\numberwithin{equation}{section}
\newcommand{\pr}{\partial}
\newcommand{\ta}{\tilde{\alpha}}
\newcommand{\veps}{\varepsilon}
\newcommand{\lm}[2]{\lim\limits_{#1\to #2}}
\newcommand{\ovr}[1]{\overline{#1}}   
\newcommand{\be}{\begin{equation}}
\newcommand{\ee}{\end{equation}}
\newcommand{\bee}{\begin{equation*}}
\newcommand{\eee}{\end{equation*}}
\def\tr{\textnormal{tr}}
\def\dv{\textnormal{div}}
\def\grad{\nabla}
\def\dint{\displaystyle\int}
\def\R{\mathbb{R}}
\def\vol{\mathrm{vol}}
\def\dvol{\mathrm{dvol}}
\def\r{\rho}
\def\S{\Sigma}
\def\({\left(}
\def\){\right)}
\def\a{\alpha}
\def\b{\beta}
\def\H{\mathcal{H}}
\def\bH{\mathbf{H}}
\def\Hb{\mathbb{H}}
\def\s{\sigma}
\def\w{\omega}
\def\g{\gamma}
\def\V{\mathcal{V}}
\def\bS{\mathbb{S}}
\def\graph{\textnormal{graph}}
\def\m{\mathbf{m}}
\def\depth{\textnormal{depth}}
\def\G{\mathcal{G}}
\def\l{\ell}
\def\diam{\textnormal{diam}}
\def\GHto{\stackrel{\textnormal{GH}}{\longrightarrow}}
\newcommand{\rstr}{\:\mbox{\rule{0.1ex}{1.2ex}\rule{1.1ex}{0.1ex}}\:}
\def\lip{\textnormal{Lip}} 
\newcommand{\IFto}{\stackrel {\mathcal{F}}{\longrightarrow} }
\newcommand{\bdry}{\partial}
\def\set{\textrm{set}}
\def\spt{\textrm{spt}}
\newcommand{\mass}{{\mathbf M}}
\newcommand{\intcurr}{{\mathbf I}}
\newcommand{\IFZto}{\xrightarrow[Z]{\;\; \mathcal{F}\;\; }}
\newcommand{\di}{d^{\mathrm{intr}}}
\newcommand{\definedas}{\mathrel{\raise.095ex\hbox{\rm :}\mkern-5.2mu=}}
 \newcommand{\asdefined}{\mathrel{=\mkern-5.2mu\raise.095ex\hbox{\rm :}}}
\def\defeq{\definedas}
\title[IF stability of the PMT for AH graphs]{Intrinsic flat stability of the positive mass theorem for asymptotically hyperbolic graphical manifolds}
\author[{Cabrera Pacheco}]{Armando J. {Cabrera Pacheco}}
\address{Department of Mathematics, Universit\"at T\"ubingen, 72076 T\"{u}bingen, Germany}
\curraddr{T\"ubingen AI Center, Universit\"at T\"ubingen, 72076 T\"{u}bingen, Germany}
\email{a.cabrera@uni-tuebingen.de}
\author[{Graf}]{Melanie Graf}
\address{Department of Mathematics, Universit\"at T\"ubingen, 72076 T\"{u}bingen, Germany and Institute for Mathematics, Universit\"at Potsdam, 14476 Potsdam, Germany}
\curraddr{Department of Mathematics, Universit\"at Hamburg,  20146 Hamburg, Germany}
\email{melanie.graf@uni-hamburg.de}
\author[Perales]{Raquel Perales}
\address{CONACYT Research Fellow. Institute of Mathematics at the National Autonomous University of Mexico. Oaxaca}
\email{raquel.perales@im.unam.mx}
\begin{document}

\begin{abstract}
The rigidity of the Riemannian positive mass theorem for asymptotically hyperbolic manifolds states that the total mass of such a manifold is zero if and only if the manifold is isometric to the hyperbolic space. This leads to study the stability of this statement, that is, if the total mass of an asymptotically hyperbolic manifold is almost zero, is this manifold close to the hyperbolic space in any way? Motivated by the work of Huang, Lee and Sormani for asymptotically flat graphical manifolds with respect to intrinsic flat distance, 
we show the intrinsic flat stability of the positive mass theorem for a class of asymptotically hyperbolic graphical manifolds by adapting the positive answer to this question provided by Huang, Lee and the third named author.
\end{abstract}

\maketitle

%%%%%%%%%%%%%%%%%%%%%%%%%%%%%%%%%%%%%%%%%%%%%%%%%%%%%%%%%%%%
%%%%%%%%%%%%%%%%%%%%%%%%%%%%%%%%%%%%%%%%%%%%%%%%%%%%%%%%%%%%

\section{Introduction}\label{sec-intro}
\thispagestyle{empty}

In the context of mathematical relativity, asymptotically hyperbolic manifolds correspond to initial data sets for the Einstein equations with a negative cosmological constant $\Lambda$. An asymptotically hyperbolic manifold is, roughly speaking, a Riemannian manifold $(M^n,g)$  such that the metric $g$ approaches the metric of the $n$-dimensional hyperbolic space, $\Hb^n$, at infinity sufficiently fast.  Under appropriate decay conditions on $g$ there is a well defined notion of total mass of $(M^n,g)$ given by Chru\'sciel and Herzlich \cite{C-H}, and Wang \cite{Wang}.

From the constraint equations for the Einstein equations, it follows that if the initial data set is a time-symmetric, asymptotically hyperbolic manifold $(M^n,g)$, then 
with a suitable normalization of the cosmological constant,  the dominant energy condition reduces to a lower bound on the scalar curvature, $R(g) \geq -n(n-1)$.  The positive mass theorem then asserts that the mass of this type of manifolds is non-negative, and it is equal to 0 if and only if the manifold is isometric to $\Hb^n$. The history of the positive mass theorem and its different proofs is rich.  We refer the reader to a recent proof by Sakovich \cite{Sakovich}, which also contains a complete description of the history of this result and previous results. The rigidity part was established in general by Huang, Jang and Martin in \cite{H-J-M}.

From the rigidity statement of the positive mass theorem it is natural to ask whether a stability statement holds. The answer to this question is subtle, as can be seen in examples for the asymptotically flat setting given by Lee and Sormani in \cite{L-S}, showing that the answer is negative with respect to some usual topologies. Nonetheless, the stability of the positive mass theorem has been established in some cases.  In particular, the Sormani--Wenger intrinsic flat distance \cite{SW2} has shown to be an adequate notion of distance for this problem. In \cite{S-S} Sakovich and Sormani obtained a stability result for the positive mass theorem for complete rotationally symmetric asymptotically hyperbolic manifolds with respect to this distance. Huang and Lee \cite{H-L}   showed stability of the positive mass theorem with respect to the Federer--Fleming flat distance for a class of asymptotically flat graphical manifolds.   Subsequently Huang, Lee and Sormani showed stability of the positive mass theorem for a smaller subclass %\rp{it is less general, that is why i said it was a subclass} 
with respect to the intrinsic flat distance 
\cite{HLS}. While there were two gaps in \cite{HLS}, see \cite{HLS-Corr}, these are now completely filled in. The gap in \cite[Theorem 1.3]{HLS} was filled in by work of Del Nin and the third named author \cite{dNP}, and the gap in  \cite[Theorem 1.4]{HLS} by the work of Huang, Lee and the third named author  \cite{HLP}.  There is also a proof of  \cite[Theorem 1.3]{HLS}
using a different approach to \cite{HLS} in \cite{HLP} (c.f. \cite[Theorem 7.2]{A-P} for  a proof in the entire case).   

Following the work in \cite{H-L}, the first named author showed in  \cite{C} the stability of the positive mass theorem for a class of asymptotically hyperbolic graphical manifolds with respect to the Federer--Fleming flat distance.  In this work, starting from \cite{C} and following  \cite{HLP}, we establish a stability result  for a subclass of asymptotically hyperbolic graphs with respect to the intrinsic flat distance. We note that we cannot use the results in \cite{dNP} since they only ensure existence of intrinsic flat limits of the form $(B^{\R^n}(R), d_{\R^n}, [[ B^{\R^n}(R)]])$.  

For asymptotically hyperbolic manifolds stability has been established in some other cases (see \cite{S-S,AllenAH}).  The techniques in  \cite{H-L} have been also successfully applied to obtain a stability result of the Brown--York mass by Alaee, McCormick and the first named author in \cite{ACM}, and the techniques in \cite{H-L,HLS,A-P, HLP}  have also been applied to obtain flat and intrinsic flat stability results for tori with almost non-negative scalar curvature by the first and third named authors \cite{CKP}.

We will follow the definition of asymptotically hyperbolic graphs and the adaptation of the total mass for asymptotically hyperbolic manifolds to asymptotically hyperbolic graphs by  Dahl, Gicquaud and Sakovich \cite{D-G-S}.  We write $\Hb^{n+1}$ as the warped product $\Hb^n \times_{V} \R$ with metric $\bar b =  b + V^2 ds^2$, where $b$ is the metric of $\Hb^n$
which we write in coordinates defined on $\R^n=[0,\infty) \times \bS^{n-1}$ and $V(r) = \cosh(r)$ where $r$ represents the radial coordinate. We denote the open ball in $\Hb^{n}$ of radius $\r$ around the origin by $B^b(\r)$.   We start by defining the class of asymptotically hyperbolic graphs that where studied in  \cite{C} and which will be the basis for the subclass we will consider here.

\begin{definition}\label{def-Gn}
For $n  \geq 3$, define $\mathcal{G}_n$ to be the space of  graphs 
of functions, $\graph(f) \subset  \Hb^{n+1}$,  where $f \colon \Hb^n \setminus U \to \R$ is a continuous function which is smooth on $\Hb^n \setminus \overline{U}$ and $U \subset \Hb^n$ is an open and bounded subset whose complement is connected, such that $\graph(f)$ is 
a balanced asymptotically hyperbolic graph when endowed with the metric induced by $\Hb^{n+1}$, with scalar curvature greater than or equal to $-n(n-1)$, and it is either entire or with minimal boundary. In addition, we require:
\begin{enumerate}[$(1)$]
\item The mean curvature vector of $\graph(f)$ in $\Hb^{n+1}$ points upward
\item For almost every $h \in \mathbb R$, the level set $f^{-1}(h)$ is star-shaped and outer-minimizing in $\Hb^{n}$.
\end{enumerate}
\end{definition}

We denote by $\m(f)$ the total mass of any $\graph(f) \in \G_n$. 
The first named author showed stability of the class $\mathcal{G}_n$ \cite{C}. In particular, after vertically translating $\graph(f)$, 
it was shown that in any ball $B^{\bar b}(\r) \subset \Hb^{n+1}$ of radius $\r$ centered at the origin, 
 $$d_F^{ B^{\bar b}(\r)}(\graph(f), \Hb^n  \times \{ 0 \}) \to 0\quad \text{ as }\,\m(f)  \to 0,$$ 
where $d_{F}$ denotes the Federer--Fleming flat distance. See Theorem \ref{thm-main-C}.

Convergence with respect to the flat distance does not necessarily imply convergence with respect to the intrinsic flat distance. Hence, it is interesting to study the stability of the positive mass theorem with respect to intrinsic flat distance for the class $\G_n$.  
Since by Wenger's compactness theorem, compact oriented Riemannian manifolds 
with a uniform upper diameter bound and a uniform upper volume bound for the manifolds and their boundaries is precompact with respect to intrinsic flat distance, we will consider (in a similar way to  \cite{HLS, HLP}) a subclass of $\mathcal{G}_n$.

\begin{definition}\label{def-Gn2}
For constants $\r_0, \,\g, \, D>0$, we define $\mathcal{G}_n(\rho_0,\g,D)$ to be the space of $n$-dimensional manifolds  $(M,g)$ (possibly with boundary) that admit a smooth Riemannian isometric embedding into $\Hb^{n+1}$, $\Psi:M \to \Hb^{n+1}$, such that $\Psi(M)=\graph(f)$ for some $\graph(f) \in \mathcal{G}_n$ and that in addition the following is satisfied: 
\begin{enumerate}\setcounter{enumi}{2}
\item $U \subset B^b(\rho_0/2)$
\item For $r \geq \tfrac{\r_0}{2}$, a uniform decay condition holds, 
\be\label{eq-Vgrad}
V^2|\grad^b f|_b^2 \leq \g^2
\ee
\item The region $ \Omega(\r_0) = \Psi^{ -1}( \overline{B^b(\r_0)} \times \R)$ has bounded depth,
\bee
\depth(\Omega(\r_0)) = \sup \{ d_M(p,\S(\r_0))\, : \, p \in \Omega(\r_0) \} \leq D,
\eee   
where $d_M$ denotes the length distance in $M$ induced by $g$ and $\S(\r_0) = \pr \Omega(\r_0) \setminus \pr M$. 
\item 
If $f$ is not entire, a stronger condition for the minimal boundary holds, $\nabla^b_{\nu} f \to \infty$ %as one approaches $\pr U$ 
 \emph{while $\nabla^b_{X} f $ remains bounded}   as one approaches $\pr U$ (where $\nu$ is the local vector field obtained by extending the outward unit normal of $\pr U$ to a neighborhood of $\pr U$ by parallel transport along the flow lines of the normal exponential map and $X$ is any vector field with $X\perp^b \nu $). We will additionally demand that our minimal boundary is mean convex, i.e., $H\geq 0$, where $H$ denotes the mean curvature of $\pr U$ as a submanifold of $\Hb^{n}$, and star-shaped. % (which in particular implies that there is only one connected component). 
\end{enumerate}
\end{definition}

Given $(M,g) \in \mathcal{G}_n(\rho_0,\g,D)$ we endow $(\Omega(\r), g|_{\Omega(\r)})$ with the intrinsic length distance, $\di_{\Omega(\r)}$,
and the integral current with weight $1$, $[[\Omega(\r)]]$, as in Example \ref{ex-submanifoldCurrent}.
Our first result is the following. 

\begin{thm}\label{thm-MainO}
Let $M_j\in\G_n(\r_0,\gamma, D)$ be a sequence of asymptotically hyperbolic graphs 
with $\Psi_j:  M_j  \to  \Hb^{n+1}$ a smooth Riemannian isometric embedding as in Definition  \ref{def-Gn2}.
If  $\lim_{j \to \infty}\m(M_j)=0$,   then for any $\r>\r_0$ we have
\bee
\lim_{j \to \infty} d_{\mathcal{F}}   ( (\Omega_{j}(\r), \di_{\Omega_{j}(\r)}, [[\Omega_{j}(\r)]]),  (  \ovr{ B^b(\r)}  , \di_{\ovr{ B^b(\r)}},  [[ \ovr{B^b(\r)} ]]  ))= 0,
\eee
where $d_{\mathcal{F}}$ denotes the intrinsic flat distance, and $\vol(\Omega_{j}(\r)) \to \vol(B^b(\r))$. 
\end{thm}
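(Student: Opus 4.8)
The plan is to upgrade the Federer--Fleming flat convergence already established in \cite{C} to intrinsic flat convergence, following the strategy of \cite{HLP}: bound the relevant geometry uniformly to obtain subconvergence, compare the intrinsic length metric of $\Omega_j(\r)$ with the hyperbolic metric of $\ovr{B^b(\r)}$ via the vertical projection, and identify the limit. Vertical translation is an isometry of $\Hb^{n+1}$ which changes neither $\grad^b f_j$ nor the integral current space $(\Omega_j(\r),\di_{\Omega_j(\r)},[[\Omega_j(\r)]])$, so after translating as in Theorem \ref{thm-main-C} we may assume $\graph(f_j)\to\Hb^n\times\{0\}$ in $B^{\bar b}(\r)$ in the flat sense, that $\int_{\{r\ge\r_0/2\}}V^2|\grad^b f_j|_b^2\,\dvol_b\to 0$ (with $V^2|\grad^b f_j|_b^2\le\gamma^2$ there by \eqref{eq-Vgrad}), and --- via the Penrose-type inequality for asymptotically hyperbolic graphs together with the star-shaped and outer-minimizing hypotheses --- that $|\partial U_j|_b\to 0$ and $\vol_b(U_j)\to 0$ when $f_j$ is not entire.

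First I would record the uniform bounds needed for Wenger's compactness theorem. The induced metric on the graph is $g_j=b+V^2\,df_j\otimes df_j$, so $b\le g_j\le(1+V^2|\grad^b f_j|_b^2)\,b$; on the annulus $\{\r_0\le r\le\r\}$ this gives $b\le g_j\le(1+\gamma^2)\,b$, so that part of $\Omega_j(\r)$ is uniformly bi-Lipschitz to the corresponding hyperbolic annulus and has uniformly bounded diameter, volume and boundary area. On the inner region $\Omega_j(\r_0)$, Definition \ref{def-Gn2}(5) says every point lies within $d_{M_j}$-distance $D$ of $\S_j(\r_0)$, which (since $V\ge 1$, so vertical displacement is a lower bound for length along the graph) also yields a uniform height bound $|f_j|\le D+o(1)$ on $\Omega_j(\r_0)$; together with boundedness of $\vol_{g_j}(\Omega_j(\r_0))$ --- itself one of the points to be proved, see below --- this bounds $\diam(\Omega_j(\r))$, $\vol(\Omega_j(\r))$ and $\vol(\partial\Omega_j(\r))$, and Wenger's theorem gives a subsequence converging in the intrinsic flat sense to an integral current space $(X_\infty,d_\infty,T_\infty)$.

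Next I would identify the limit. The vertical projection $\pi_j\colon\Omega_j(\r)\to\ovr{B^b(\r)}\setminus U_j\subset\ovr{B^b(\r)}$ is $1$-Lipschitz for the length metrics (as $b\le g_j$), and off a subset of $b$-measure tending to $0$ it is $(1+\eta_j)$-bi-Lipschitz with $\eta_j\to 0$ (by Chebyshev and the $L^1$-smallness of $V^2|\grad^b f_j|_b^2$ on $\{r\ge\r_0/2\}$). Pushing currents forward by $\Psi_j$ into $\Hb^{n+1}$ and using the ambient flat convergence of \cite{C} together with $\vol_b(U_j)\to 0$ forces $T_\infty=[[\ovr{B^b(\r)}]]$. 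Combining the almost-isometry on the annulus, the (separately established) intrinsic flat convergence $\Omega_j(\r_0)\to\ovr{B^b(\r_0)}$ on the inner region, the matching along $\S_j(\r_0)$, and the uniform bounds above, a Lakzian--Sormani-type intrinsic flat convergence criterion as made rigorous in \cite{HLP} yields
\[
d_{\mathcal{F}}\!\left((\Omega_j(\r),\di_{\Omega_j(\r)},[[\Omega_j(\r)]]),\,(\ovr{B^b(\r)},\di_{\ovr{B^b(\r)}},[[\ovr{B^b(\r)}]])\right)\longrightarrow 0,
\]
together with $\vol(\Omega_j(\r))\to\vol(B^b(\r))$. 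Since the limit is independent of the chosen subsequence, the whole sequence converges.

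The main obstacle is the inner region $\Omega_j(\r_0)$. Unlike on the annulus there is no pointwise control there --- $f_j$ may oscillate by order $D$ and $\grad^b f_j$ blows up along the minimal boundary --- so one must rule out a spurious ``spike'' in the limit, i.e.\ show that $\Omega_j(\r_0)$ itself converges in the intrinsic flat sense to $\ovr{B^b(\r_0)}$ and that $\vol_{g_j}(\Omega_j(\r_0))\to\vol_b(B^b(\r_0))$. I expect this to require extending the level-set analysis of \cite{C} into $B^b(\r_0)$ --- using a localized form of the graphical mass identity so that $\m(M_j)\to 0$ forces the relevant interior integral to vanish, with the boundary term at the mean-convex minimal boundary contributing a favorable sign --- together with the depth-induced height bound $|f_j|\le D+o(1)$, the tangential gradient bound from Definition \ref{def-Gn2}(6), and $\vol_b(U_j)\to 0$. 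This is the asymptotically hyperbolic, warped-product counterpart of the estimate repairing the gap in \cite[Theorem 1.4]{HLS} carried out in \cite{HLP}, and performing it while tracking the warping factor $V$ is the technical core of the proof.
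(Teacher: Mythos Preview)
Your outline differs substantially from the paper's proof and leaves genuine gaps. The paper does \emph{not} proceed by Wenger compactness followed by identification of the limit, nor does it upgrade the Federer--Fleming convergence of \cite{C}. Instead it invokes directly the Allen--Perales compactness theorem (Theorem \ref{convBdry}), which delivers intrinsic flat convergence to a \emph{prescribed} limit $(\Omega_\infty,g_\infty)$ once one has uniform diameter and boundary-area bounds, volume convergence, and $C^1$ diffeomorphisms $\Phi_j:\Omega_\infty\to\Omega_j$ satisfying the one-sided inequality $\Phi_j^*g_j\ge\lambda_j g_\infty$ with $\lambda_j\to 1$. The bounds and volume convergence are obtained in Theorem \ref{thm-IFbounds} from the coarea/level-set estimates of Lemmas \ref{lem-volO-}--\ref{lem-volO+}, which use the height bound Theorem \ref{thm-fUpperB} and the outer-minimizing property of the level sets --- not a ``localized mass identity''. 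Your claimed $L^1$ smallness $\int_{\{r\ge\r_0/2\}}V^2|\grad^b f_j|_b^2\,\dvol_b\to 0$ does not follow from flat convergence or from the pointwise bound \eqref{eq-Vgrad} and is never established; and the ``Lakzian--Sormani-type criterion as made rigorous in \cite{HLP}'' mischaracterizes that paper, which instead relies on the Allen--Perales theorem and hence on the diffeomorphisms just described, not on an almost-isometry off a small-measure set.

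The step you correctly flag as the main obstacle --- the inner region and the minimal boundary --- is handled by a construction you do not mention. In the non-entire case the paper \emph{caps off} each $\Omega_j(\r)$ by attaching a cylinder $\partial U_j\times(-L,0)$ and a graphical cap over $U_j$, producing a manifold $\widetilde\Omega_j$ diffeomorphic to $\overline{B^b(\r)}$ (Theorem \ref{thm-NewOmega}, Appendix \ref{sec-appendix}). Condition (6) of Definition \ref{def-Gn2} --- specifically the boundedness of the \emph{tangential} gradient near $\partial U_j$ --- is exactly what is used to build the $C^1$ diffeomorphism $\widetilde\Phi^\lambda:\overline{B^b(\r)}\to\widetilde\Omega_j$ with $\widetilde\Phi^{\lambda *}g_j\ge\lambda b$; this is the hyperbolic analogue of the \cite{HLP} repair, and it is a concrete construction rather than a further level-set estimate. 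The added cap has volume $O(\vol(\partial U_j))\to 0$ by the Penrose-type inequality (Theorem \ref{RPI}), so $\Omega_j(\r)$ and $\widetilde\Omega_j$ share the same intrinsic flat limit, and Theorem \ref{convBdry} applied to $\widetilde\Omega_j$ finishes the proof.
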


Conditions (3) and (4) of Definition \ref{def-Gn2} provide a uniform control on the exterior region,  i.e. the complement of $\Omega(\r_0/2)$, of the sequence of manifolds, while (5) prevents the formation of deep ``gravity wells'' (c.f. \cite{LeeSormani1,HLS}). 
Conditions (3)-(5) provide a uniform intrinsic diameter bound of regions $\Omega_j(\r)$
and a uniform bound for $\vol( \bdry \Omega_j(\r))$.  Conditions (1) and (2) together with the uniform diameter bound ensure convergence of $\vol(\Omega_j(\r))$ to $\vol(B^b(\r))$. Thus, Wenger's compactness theorem ensures convergence 
to an integral current space. To get the precise limit space one has to use condition (6) to ensure that the regions 
$\Omega_j(\r)$ embed into suitable Riemannian manifolds diffeormorphic to $\overline{B^b(\r)}$ via a capping procedure as in \cite{HLP},  Theorem \ref{thm-NewOmega}, in order to apply a compactness result by Allen and the third named author \cite{A-P} (see Theorem \ref{convBdry}). We note that to apply Theorem \ref{convBdry} one has to endow $\Omega(\r)$ with the structure $(\Omega(\r), \di_{\Omega(\r)}, [[\Omega(\r)]])$ 
rather than $(\Omega(\r), d_{M}, [[\Omega(\r)]])$. The same is true when applying the compactness result from \cite{A-P} in the proof of \cite[Theorem 3.2]{HLP}. However, by \cite{dNP}  the result in \cite{HLS} is correct for $(\Omega(\r), d_{M}, [[\Omega(\r)]])$ as well.
We also remark that boundedness of $\nabla_X f$ for \emph{tangential directions $X$} as one approaches $\pr U$ as specified in (6) was not originally stated in \cite{HLS}, nor in \cite{HLP}, though in the latter it was implicitly assumed.

We also obtain a pointed version. This is the analogue of \cite[Theorem 1.4]{HLS}.

\begin{thm}\label{thm-MainB}
Let $M_j\in\G_n(\r_0,\gamma, D)$ be a sequence of asymptotically hyperbolic graph manifolds with $\lim_{j \to \infty}\m(M_j)=0$
and $p_j \in \Sigma_j(\r_0)$  be a sequence of points.  Then  for  almost every $R>0$ we have
\bee
\lim_{j \to \infty} d_{\mathcal{F}}   (  (\overline{B^{M_{j}} (p_{j}, R)} , d_{M_j},  [[ \overline{B^{M_{j}} (p_{j},R) }]])  ,   
( \ovr{B^b(R)} , d_{\Hb^n},  [[ \ovr{B^b(R)} ]] )  )  = 0
\eee
and $\vol(B^{M_j}(p_j,R)) \to \vol(B^b(R))$. 
\end{thm}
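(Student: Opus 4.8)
The plan is to deduce Theorem~\ref{thm-MainB} from Theorem~\ref{thm-MainO} together with the convergence of the $\di_{\Omega_j(\r)}$-balls to the hyperbolic balls, following the strategy by which \cite[Theorem 1.4]{HLS} is derived from \cite[Theorem 1.3]{HLS}. First I would fix $R>0$ and choose $\r>\r_0$ large enough (depending on $R$ and $D$) so that, by Condition~(5) of Definition~\ref{def-Gn2}, every ball $B^{M_j}(p_j,R)$ is contained in $\Omega_j(\r)$; indeed $\depth(\Omega_j(\r_0))\le D$ and the exterior control from Conditions~(3)--(4) give a uniform bound on how far into the exterior region a point at $d_{M_j}$-distance $R$ from $p_j\in\S_j(\r_0)$ can reach, so a single $\r$ works for the whole sequence. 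On this region the intrinsic metric $\di_{\Omega_j(\r)}$ and the ambient metric $d_{M_j}$ agree on balls of radius $R$ around $p_j$ once $\r$ is large enough, since geodesics realizing distances $\le R$ stay in the interior of $\Omega_j(\r)$.

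Next I would invoke Theorem~\ref{thm-MainO}: the integral current spaces $(\Omega_j(\r),\di_{\Omega_j(\r)},[[\Omega_j(\r)]])$ converge in the intrinsic flat sense to $(\overline{B^b(\r)},\di_{\overline{B^b(\r)}},[[\overline{B^b(\r)}]])$, with volume convergence. The key point is then to pass from intrinsic flat convergence of the regions $\Omega_j(\r)$ to intrinsic flat convergence of the metric balls $B^{M_j}(p_j,R)$ inside them. For this I would use the general principle (as in \cite{HLS}, relying on the work of Sormani--Wenger and Portegies) that if a sequence of integral current spaces converges in the intrinsic flat sense with volume convergence, then for almost every radius $R$ the balls of radius $R$ around a convergent sequence of points also converge in the intrinsic flat sense; the "almost every" is needed to avoid radii at which the limit ball's boundary carries positive measure or the slicing is badly behaved. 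Here one must also check that the basepoints $p_j\in\S_j(\r_0)$ converge, after passing to a subsequence, to a point of $\overline{B^b(\r)}$ — this follows from the uniform depth bound placing them in a compact part of the converging sequence, and since the limit is rotationally symmetric the choice of limit point is immaterial, yielding $\overline{B^b(R)}$ as the limit ball up to isometry.

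Finally, I would identify the limit: the balls $B^{\overline{B^b(\r)}}(q,R)$ in hyperbolic space of radius $R<\r-\r_0$ around an interior point $q$ are just the hyperbolic balls $\overline{B^b(R)}$ with their intrinsic (which here equals ambient) metric $d_{\Hb^n}$ and weight-one current $[[\overline{B^b(R)}]]$, so the limit statement matches the assertion of the theorem. Volume convergence $\vol(B^{M_j}(p_j,R))\to\vol(B^b(R))$ comes along for the ride from the volume-convergence clause of the ball-convergence principle, again for almost every $R$. I expect the main obstacle to be the passage from convergence of the full regions to convergence of the metric balls with the correct limit: one has to control the intrinsic metrics $d_{M_j}$ near $\bdry\Omega_j(\r)$ and near $\bdry M_j$ (the minimal boundary) carefully enough that the balls $B^{M_j}(p_j,R)$ are genuine "intrinsic" balls in the converging integral current spaces and not distorted by paths exiting $\Omega_j(\r)$, and to verify the measure-theoretic hypotheses that make the "almost every $R$" slicing argument go through; the rest is bookkeeping already carried out in \cite{HLS} and adapted here exactly as in the asymptotically flat case.
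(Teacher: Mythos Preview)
Your overall strategy is the same as the paper's, but there is a genuine gap in the step where you assert that the basepoints $p_j$ subconverge to a point $p_\infty$ of the limit space. Intrinsic flat convergence by itself does \emph{not} guarantee that any sequence of points subconverges; points can disappear in the limit (see \cite[Appendix]{SW2} and \cite[Definition~3.2]{Sormani-AA}). Your justification ``the uniform depth bound placing them in a compact part of the converging sequence'' does not address this: compactness of the $p_j$ inside $\Omega_j(\r)$ is trivially true, but what is needed is a single metric space into which all the $\Omega_j(\r)$ embed so that the images of the $p_j$ lie in a common compact set. The paper supplies this via Corollary~\ref{lem-Phi}, showing that the annular regions $A_j(\r_0-1,\r_0+1)$ containing the $p_j$ converge in \emph{both} Gromov--Hausdorff and intrinsic flat sense to the same limit; it is the GH convergence (combined with Theorem~\ref{prop:GHIF} and Lemma~\ref{theorem:point-convergenceH}) that rules out disappearance.

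A second, related point you gloss over is why the limit point $p_\infty$ is far enough from the boundary $\partial B^b(\r)$ that the limit ball $S(p_\infty,R)$ is actually a full hyperbolic $R$-ball. Saying ``the limit is rotationally symmetric so the choice is immaterial'' begs the question: a priori $p_\infty$ could land on or near $\partial B^b(\r)$, and then $S(p_\infty,R)$ would be a truncated ball, not isometric to $\ovr{B^b(R)}$. The paper handles this with Lemma~\ref{lem-inner-gone} (the inner boundaries $\partial M_j$ converge to $\mathbf{0}$) together with Theorem~\ref{theorem:GH}, which yields the lower bound $d_{\Hb^n}(p_\infty,\partial B^b(\r_0+\bar R))\geq \bar R$. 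The remaining ingredients you cite --- matching $d_{M_j}$ with $\di_{\Omega_j(\r)}$ (Lemma~\ref{lem-distequiv}), ball convergence for a.e.\ radius (Lemma~\ref{lem-AASorh}, Remark~\ref{rmrk-volConv}), and the diagonalization (Theorem~\ref{theorem:subsequences}) --- are indeed as you describe.
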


This manuscript is organized as follows. In Section \ref{sec-back} we provide background material. In Section \ref{sec:prelimResults}
 we prove volume estimates for the regions $\Omega(\r)$, uniform diameter and area bounds and also show their volume converges to the volume of a ball in hyperbolic space provided $\m(f) \to 0$. Additionally, we show Gromov-Hausdoff and intrinsic flat convergence of annular regions, $\Omega(\r') \setminus \Omega(\r)$, and that the inner boundaries, $\bdry M$, converge to the zero integral current space.   The proofs of the main results are given in Section \ref{s-Proofs}.  The proof of Theorem \ref{thm-MainO}, in the entire case, consists in applying the compactness theorem of \cite{A-P}, Theorem \ref{convBdry}, to the regions $\Omega(\r)$, and in the non-entire case, we apply Theorem \ref{thm-NewOmega}, proven 
 in Appendix \ref{sec-appendix}, where we enlarge the $\Omega(\r)$'s and use condition (6) to carefully construct diffeormorphisms from the enlargements to $\overline{B^b(\r)}$, so that Theorem \ref{convBdry} can be applied. Theorem \ref{thm-MainB} follows from Theorem \ref{thm-MainO} and Lemma \ref{theorem:point-convergenceH}, we believe the latter result is interesting in its own. It easily follows from results in \cite{HLP}, see Theorem \ref{theorem:point-convergence}, but in Lemma \ref{theorem:point-convergenceH} 
   we clearly see when Gromov-Hausdorff and intrinsic flat convergence of a sequence of subsets of an intrinsic flat converging sequence
   imply subconvergence of a sequence of points.

\setcounter{tocdepth}{1}
\tableofcontents

\bigskip

%%%%%%%%%%%%%%%%%%%%%%%%%%%%%%%%%%%%%%%%%%
%%%%%%%%%%%%%%%%%%%%%%%%%%%%%%%%%%%%%%%%%

 {\bf Acknowledgments.} The authors would like to thank Lan-Hsuan Huang, Dan Lee and Christina Sormani for helpful and interesting discussions during the preparation of this work. AJCP is grateful for the generous support of the Carl Zeiss Foundation and the financial support of the Deutsche Forschungsgemeinschaft through the SPP 2026 “Geometry at Infinity”. MG is grateful for the support of the Deutsche Forschungsgemeinschaft through the SPP 2026 “Geometry at Infinity”. RP acknowledges support from CONACyT Ciencia de Frontera 2019  CF217392 grant. This project started during the Simons Center Mass in General Relativity Workshop, March 26-30, 2018 organized by Christina Sormani, Shing-Tung Yau, Richard Schoen, Mu-Tao Wang  and Piotr Chrusciel to whom we are grateful.

\section{Background}\label{sec-back}

In this section we first collect some results from \cite{D-G-S} about asymptotic hyperbolic graphs and review material from \cite{C} about the class $\mathcal G_n$. In the second part, we define integral current spaces, intrinsic flat distance and state some results that we will apply in subsequent sections.

%%%%%%%%%%%%%%%%%%%%%%%%%%%%%%%%%%%%%%%%%
%%%%%%%%%%%%%%%%%%%%%%%%%%%%%%%%%%%%%%%%

\subsection{Asymptotically hyperbolic graphs}\label{ssec-AHgraphs}

Here we give some definitions related to asymptotically hyperbolic graphs and state a Riemannian Penrose like inequality, obtained in \cite{D-G-S}.
For a more detailed discussion about asymptotically hyperbolic graphs the reader is referred to  \cite{D-G-S}.

\bigskip
Let $\Hb^n$ denote the hyperbolic space of dimension $n$ and let $b$ be its Riemannian metric, 
which in spherical coordinates $(r,\theta) \in  [0,\infty) \times \bS^{n-1}$ takes the form
\bee
b = dr^2 + \sinh^2(r) \s,
\eee
where $\s$ represents the standard Riemannian metric of $\bS^{n-1}$. 

We will consider graphs of functions over subsets of $\Hb^{n}$ inside the $(n+1)$-dimensional hyperbolic space $\Hb^{n+1}$ with Riemannian metric
$\bar b$, written in coordinates $(r, \theta, s) \in \Hb^n \times \R $ as  
\bee
\bar b = b + V(r)^2 ds^2,
\eee
where $V(r)=\cosh(r)$. Note that the scalar curvature of $\bar b$ is $R(\bar b) = -(n+1)n$.

Given an open set $U \subset \Hb^n$ and a continuous function $f \colon \Hb^n \setminus U \longrightarrow \R$ which is smooth on $\Hb^{n} \setminus \ovr{U}$, we endow
\bee
\graph(f) \defeq \Big\{ (x,   f(x)) \in   \Hb^{n+1}  \, \big | \  x \in \Hb^{n}  \setminus \ovr{U} \Big\}
\eee
with the coordinate chart 
\begin{equation}\label{chart}
\Pi \colon \graph(f)  \to \Hb^{n} \setminus \ovr{U}, \qquad  \Pi(x,f(x)) = x,
\end{equation}
and the Riemannian metric $g$ induced by $\Hb^{n+1}$. To simplify notation, we will often denote geometric quantities associated to $\graph(f)$ using $f$ instead of its metric. For example, we denote its scalar curvature as $R(f)$. This should not cause any confusion as the meaning of the symbols will be clear from the context.

While there is a more general definition of asymptotically hyperbolic manifolds and their mass (see \cite{C-H}, \cite{Wang}) 
we will only consider asymptotically hyperbolic graphs and so, in the interest of simplicity, will only present the graph case here (following \cite{D-G-S}).

\begin{definition}\label{def-AHgraph} 
Let $n\geq 3$ 
and  $U \subset \Hb^n$ be an open (possibly empty)  bounded subset with connected complement. Let  $f \colon \Hb^n \setminus U \longrightarrow \R$ be a continuous function which is smooth on $\Hb^{n} \setminus \ovr{U}$.  We say that $(\graph(f), g)$ is an asymptotically hyperbolic graph (or simply $f$ is asymptotically hyperbolic) with respect to the chart $\Pi$ as in \eqref{chart} if $e=g-b= V^2 df \otimes df$ satisfies 
\begin{enumerate}[(i)]
\item 
\bee
\int_{\Hb^n \setminus B} (|e|_b^2 + |\grad^b e |_b^2) \cosh(r) \, \dvol_b < \infty,
\eee
and
\bee
\dint_{\Hb^n \setminus B} | R(f) + n(n-1)| \cosh(r) \, \dvol_b < \infty,
\eee
where $B$ is a closed ball in $\Hb^n$ that properly contains $U$ and $\dvol_b$ denotes the volume form induced by $b$.
\item  $|e|_b^2=V^2 |\grad^b f|_b^2 \to 0$ at infinity. 
\end{enumerate}
\end{definition}

In general, the total mass of an asymptotically hyperbolic manifold can be defined as the minimization of a functional, called the \emph{mass functional}, that depends on its coordinate chart at infinity, say $\Psi$. If the mass functional is positive over an appropriate subset of a vector space, then $\Psi$ can be chosen so that the mass takes a simpler form; this suitable diffeomorphism $\Psi$ is then referred to as a set of balanced coordinates \cite{D-G-S}.  For an asymptotically hyperbolic graph, if $\Pi$ as in (\ref{chart}) is a set of balanced coordinates we say that $f$ is balanced. In this case the mass takes the form given below \cite{D-G-S}.

\begin{definition} \label{mass-AHgraph}
If $f$ is an asymptotically hyperbolic and balanced function, its mass is given by 
\be \label{AHG-mass}
\m(f) = \dfrac{1}{2(n-1) \w_{n-1}} \lm{r}{\infty} \int_{S_r} (V(\dv^b e - d \tr^b e) + (\tr^b e) dV - e(\grad^b V, \cdot))(\nu_r) \dvol_b,
\ee
where $e \defeq V^2 df \otimes df$, $V(r) =\cosh(r)$, $S_r$ is the coordinate sphere of radius $r$ in $\Hb^{n}$, $\nu_r$ the outward normal vector to $S_r$ and $\w_{n-1}$ denotes the volume of the round sphere $\bS^{n-1}$.
\end{definition}

\begin{definition}
We say that an asymptotically hyperbolic function $f \colon \Hb^n\setminus U \to \R$  is entire
if $U = \emptyset$.  Moreover, we say that $f$ has a minimal boundary if $\pr U \neq \emptyset$, $f$ is constant on each component of $\pr U$ and $|\nabla^bf|_b\to \infty$ as one approaches $\pr U$.
\end{definition}

We now give an example of an asymptotically hyperbolic graph with minimal (and mean convex) boundary which represents an initial data set for the AdS-Schwarzschild spacetime.

\begin{example}[AdS-Schwarzschild manifolds as graphs]
Recall that the spatial AdS-Schwarz\-schild is defined for $m \geq 0$ and $n \geq 3$, as the manifold $(\rho_+,\infty) \times \bS^{n-1}$ with the metric given by
\bee
g_m = \tfrac{1}{1 + \rho^2 - \tfrac{2m}{\rho^{n-2}}} d\rho^2 + \rho^2 \s,
\eee
where $\rho_+$ is the largest root of 
\bee
 \rho^n + \rho^{n-2} - 2m = 0.
\eee
The mass of this manifold is equal to $m$, and clearly,  $g_0= b$.

To see this manifold as a graph over $\Hb^n$ it is convenient to make the change of variables $\rho = \sinh(r)$ on $\Hb^n$, so that the metric on $\Hb^{n+1}$ is given by 
\bee
\bar b = (1+ \rho^2)ds^2 + \tfrac{1}{1+\rho^2} d\rho^2 + \rho^2 \s.
\eee
Then, the AdS-Schwarzschild graph of mass $m$ is given by the function
\bee
f(\rho) = \int_{\rho_0}^{\rho} \tfrac{1}{\sqrt{1+s^2}}\sqrt{\tfrac{1}{1+s^2 -\tfrac{2m}{s^{n-2}}} - \tfrac{1}{1+s^2}}, 
\eee
that is this function is constant in the theta parameter,
and it can be checked by direct computations that this graph has a minimal boundary at $\rho=\rho_+$. 
\end{example}

We now state a Riemannian Penrose like inequality, which was very useful when proving flat convergence of sequences contained in the class $\mathcal G_n$.
We will use it in the proofs of Theorem \ref{thm-IFbounds} and Lemma \ref{lem-inner-gone}, to establish uniform volume bounds for the regions $ \partial\Omega_j(\r)$, and $U_j$ and to show that the inner boundaries of $\Omega_j(\r)$, $\partial M_j$, converge to the zero integral current space, respectively.

\begin{thm}[Riemannian Penrose like inequality {\cite[Theorem 2.1]{D-G-S}}] \label{RPI}
Suppose that $f \colon \Hb^n \setminus U \to \R$ is a balanced asymptotically hyperbolic graph in $\Hb^{n+1}$ with minimal boundary 
and scalar curvature $R(f) \geq -n(n-1)$. Suppose that $\pr U$ is mean convex (i.e., $H \geq 0$, where $H$ denotes the mean curvature of $\pr U$ in $\Hb^n$) and that $U$ contains an inner ball centered at the origin of radius $r_0$. Then,
\bee
 \vol(\pr U) \leq \tfrac{2\w_{n-1}} {V(r_0)} \m(f) 
\eee
where $V(r)=\cosh(r)$ and $\w_{n-1}$ denotes the volume of the round sphere $\bS^{n-1}$.  In particular, $\vol(\pr U)  \leq 2\w_{n-1} \m(f)$.  
\end{thm}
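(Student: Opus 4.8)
The plan is to prove $\vol(\pr U)\le \tfrac{2\w_{n-1}}{V(r_0)}\m(f)$ by the graphical (Lam-type) method, carried over to the warped product $\Hb^{n+1}=\Hb^n\times_V\R$. First I would record what the scalar curvature hypothesis means for the hypersurface $\S=\graph(f)\subset\Hb^{n+1}$. Since $\Ric_{\bar b}=-n\,\bar b$ and $R(\bar b)=-n(n+1)$, the contracted Gauss equation gives
\[
R(f)=R(\bar b)-2\,\Ric_{\bar b}(N,N)+H^2-|A|^2=-n(n-1)+H^2-|A|^2 ,
\]
where $A$ and $H$ are the second fundamental form and mean curvature of $\S$ in $\Hb^{n+1}$. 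Hence $R(f)\ge -n(n-1)$ is equivalent to $H^2\ge |A|^2$, and in particular the integrand $R(f)+n(n-1)$ is non-negative on $\Hb^n\setminus\overline U$.

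Next I would establish the ``mass $=$ bulk $+$ boundary'' identity. Writing $e=V^2\,df\otimes df$ and letting $\mathbb U=V(\dv^b e-d\tr^b e)+(\tr^b e)\,dV-e(\grad^b V,\cdot)$ be the $1$-form under the mass integral in \eqref{AHG-mass}, a direct computation (as carried out in \cite{D-G-S}) shows that $\mathbb U^{\sharp}$, after the conformal modification by the appropriate power of $W=\sqrt{1+V^2|\grad^b f|_b^2}$ needed to keep both the divergence and the inner flux finite at the minimal boundary, has divergence equal to a strictly positive function times $R(f)+n(n-1)$. Applying the divergence theorem on $B^b(\rho)\setminus\overline U$ and letting $\rho\to\infty$ (using the decay of $e$ to identify the flux at infinity with $2(n-1)\w_{n-1}\m(f)$) yields
\[
2(n-1)\w_{n-1}\,\m(f)=\int_{\Hb^n\setminus\overline U}(\text{positive weight})\,\big(R(f)+n(n-1)\big)\,\dvol_b+\mathcal B ,
\]
where the inner flux $\mathcal B$, evaluated in the limit $|\grad^b f|_b\to\infty$ and using that $f$ is locally constant on $\pr U$ (so its tangential gradient vanishes there), collapses to $\mathcal B=\int_{\pr U}V\,H\,\dvol_b$, with $H$ the mean curvature of $\pr U$ in $\Hb^n$. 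Since the bulk integral is non-negative by the first step, $2(n-1)\w_{n-1}\,\m(f)\ge \int_{\pr U}V\,H\,\dvol_b$.

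Finally I would bound $\mathcal B$ from below using a Hsiung--Minkowski identity. In $\Hb^n$ the field $\grad^b V=\sinh(r)\,\pr_r$ satisfies $\hess^b V=V\,b$, so for the closed hypersurface $\pr U$, integrating $\dv_{\pr U}$ of the tangential part of $\grad^b V$ gives
\[
\int_{\pr U}H\,\sinh(r)\,\la\pr_r,\nu\ra\,\dvol_b=(n-1)\int_{\pr U}V\,\dvol_b .
\]
Mean convexity gives $H\ge 0$, and pointwise $V=\cosh(r)>\sinh(r)\ge \sinh(r)\la\pr_r,\nu\ra$ (Cauchy--Schwarz), so $V\,H\ge H\,\sinh(r)\la\pr_r,\nu\ra$ and therefore
\[
\int_{\pr U}V\,H\,\dvol_b\ge (n-1)\int_{\pr U}V\,\dvol_b\ge (n-1)\,V(r_0)\,\vol(\pr U),
\]
the last step because $B^b(r_0)\subset U$ forces $r\ge r_0$, hence $V=\cosh r\ge\cosh r_0=V(r_0)$, on $\pr U$. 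Combining with the previous step gives $2(n-1)\w_{n-1}\,\m(f)\ge (n-1)\,V(r_0)\,\vol(\pr U)$, i.e. $\vol(\pr U)\le \tfrac{2\w_{n-1}}{V(r_0)}\m(f)$; since $V(r_0)=\cosh r_0\ge 1$ this also yields $\vol(\pr U)\le 2\w_{n-1}\m(f)$.

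The main obstacle is the second step. The induced graph metric degenerates at the minimal boundary (where $|\grad^b f|_b\to\infty$, so $g$ blows up in the normal direction), which makes the bulk integrand fail to be absolutely integrable near $\pr U$ and the flux of the unmodified $\mathbb U^{\sharp}$ through $\pr U$ infinite. Making the identity precise requires either excising a thin collar of $\pr U$, running the divergence theorem on the truncated region, and controlling the collar contributions as the collar shrinks, or working throughout with the correctly renormalized flux field and verifying carefully that its divergence is the claimed sign-definite (improperly integrable) quantity and that its flux through $\pr U$ converges to $\int_{\pr U}V H\,\dvol_b$. The remaining ingredients — the Gauss and Hsiung--Minkowski identities and the elementary bound $V\ge V(r_0)$ on $\pr U$ — are routine.
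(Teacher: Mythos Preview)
The paper does not give its own proof of this statement; it is quoted as Theorem~2.1 of \cite{D-G-S}. Your sketch is essentially the argument of Dahl--Gicquaud--Sakovich: the Lam-type divergence identity expressing the mass as a non-negative bulk term plus the boundary flux $\int_{\pr U}V\,H\,\dvol_b$, followed by the hyperbolic Hsiung--Minkowski identity (from $\hess^b V=V\,b$) and the pointwise bound $V\ge V(r_0)$ on $\pr U$. The obstacle you flag --- the blow-up of $|\grad^b f|_b$ at $\pr U$, forcing one either to truncate a collar and pass to the limit or to work with the correctly weighted flux field --- is precisely the technical point handled in \cite{D-G-S}.
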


Let us remark at this point that we will from now on, by slight abuse of notation, use $\vol $ to denote volumes of Riemannian (sub-)manifolds regardless of their dimension and we will suppress specifying the metric in the notation unless there would be inequivalent canonical choices.
\bigskip

%%%%%%%%%%%%%%%%%%%%%%%%%%%%%%%%%%
%%%%%%%%%%%%%%%%%%%%%%%%%%%%%%%%%%%

\subsection{The class $\mathcal{G}_n$ and stability of the PMT with respect to flat distance}\label{ssec-Gclass}

In \cite{C}  the first named author,  inspired by the work of Huang and Lee \cite{H-L}, defined the class $\mathcal G_n$  and proved  the stability of the hyperbolic positive mass theorem with respect to the flat distance (also known as Federer--Fleming distance).    Here we review parts of this work needed in subsequent sections.

\bigskip
We now give more details  about the conditions appearing in Definition \ref{def-Gn}.    Given an asymptotically hyperbolic function, $f \colon \Hb^n \setminus U \to \R$, let $\ovr{\bH}$ denote the mean curvature vector of $\graph(f)$ inside $\Hb^{n+1}$ and let $n_0 \defeq (\vec{0},1) \in T_p \Hb^{n+1}$, for any $p \in \Hb^{n+1}$. We say that $\ovr{\bH}$ points upward if $\bar{b}(n_0,\ovr{\bH})$  is a non-negative function that does not vanish everywhere. 
The mean curvature vector convention is that deformations in the direction given by this vector decrease volume. In particular, the standard sphere has positive mean curvature with respect to the inner pointing unit normal vector field.   We say that $f^{-1}(h)$ is star-shaped if in some radial coordinates $(r, \theta )$ on $\Hb^n\setminus\{0\} $ the set can be written as a smooth graph $(\rho_{f^{-1}(h)}(\theta), \theta ) : \bS^{n-1} \to (0,\infty)\times \bS^{n-1}\cong \Hb^n\setminus \{0\}$. Note that this in particular implies that $f^{-1}(h)$ is a differentiable
	 $(n-1)$-dimensional submanifold of $\Hb^n$. 
 For a bounded and finite perimeter set $E \subset \Hb^{n}$, we say that $\pr^* E$ (where $\pr^*E$ denotes the reduced boundary of $E$) is outer-minimizing if for any bounded set $F$ containing $E$ we have $\H^{n-1}_b(\pr^* E) \leq P(F) $ where $P$ denotes the perimeter of $F$ and $\H^{n-1}_b$ denotes the $(n-1)$ dimensional Hausdorff measure of $\Hb^n$.

\bigskip

The key idea to prove the stability of the hyperbolic positive mass theorem for  $\mathcal G_n$  was to find a suitable ``height'', $h_0(f)$, which divides any $\graph(f)$ in two parts. In the lower part,
\bee
\Big\{  (x, f(x))  \in \Hb^{n+1} \,| \, x \in \Hb^n\setminus U, \,\, f(x) \leq h_0(f)  \Big\},
\eee
one can show that all 
 level sets of $f$ have volume bounded above by some function that depends on $\m(f)$  which goes to zero as $\m(f)$ goes to zero. Meanwhile,  in the upper part, the quantity $\sup(f)-h_0(f)$ is bounded above by a function that also depends on $\m(f)$ and goes to zero as $\m(f)$ does.

In order to define this height, one studies the function 
\bee
\V(h)  \defeq  P( \{ x \in \Hb^n \, : \, \bar f(x) < h  \}),
\eee
where $\bar f$ is the extension of $f$ to $\Hb^n$ which is defined to be constant on $U$. Using condition (1) of the definition of  $\mathcal G_n$ (i.e. that the mean curvature vector of $\graph(f)$ points upward), it is shown that there exists $h_{\max} \in \R$ so that $f  <  h_{\max} $ everywhere and that $\V$ is finite for any $h < h_{\max}$.  Using condition (2) it is shown that  $\V$ is non-decreasing.  Therefore, $\V$ is differentiable almost everywhere and a height can be defined.

\begin{definition}[{\cite[Definition 4.1]{C}}]\label{defn-h0}
Let $\b > 1$ be any fixed constant. Furthermore, let $n \geq 3$ and $f$ be a balanced asymptotically hyperbolic function. We define the height $h_0(f)$ of $f$ as 
\bee \label{h0}
h_0 (f) \defeq \sup\{ h \, : \, \H^{n-1}(f^{-1}(h)) \leq \max \{ 2 \beta \w_{n-1} \m(f)^{\tfrac{n-1}{n-2}},2 \beta \w_{n-1} \m(f) \} \},
\eee
if the above set is non-empty and $h_0(f) = \min(f)$ otherwise. 
\end{definition}
If $\m(f) < 1$ then it follows that  $h_0 (f) = \sup\{ h \, : \, \H^{n-1}(f^{-1}(h)) \leq 2 \beta \w_{n-1} \m(f) \}$.

\bigskip

After an appropriate rescaling of $f$,  using that the mean curvature of the level sets is nonnegative,  
a suitable expression for $\m(f)$ and the Minkowski inequality, one shows that $\V'(h)  \geq F(\V(h))$ for almost every $h \geq h_0(f)$ and some function $F$.
Then the upper bound for $\sup(f)-h_0(f)$  is obtained by comparing $\V$ to the solution of the equation $Y'(h)=  F(Y(h))$ with initial condition equal to $\V(h_0(f))$. 
That is, one finds that $Y \leq \V$ and $Y$ goes to infinity at a finite height, implying that $\V$ also does and hence giving an upper bound to the rescaling of $f$. Rescaling back gives the desired inequality:

\begin{thm}[{\cite[Lemma 4.8]{C}}]\label{thm-fUpperB}
Let $f  \in \mathcal G_n$,  then there exists a constant $C=C(n)$ such that
\be\label{eq-fUpperB}
0 < \sup(f) - h_0(f) < C \m(f)^{\tfrac{1}{n-2}}.
\ee
\end{thm}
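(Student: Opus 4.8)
## Proof Proposal

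The plan is to follow the strategy sketched immediately before the statement in the excerpt: introduce the rescaled problem, derive a differential inequality $\V'(h) \geq F(\V(h))$ for almost every $h \geq h_0(f)$, and then compare $\V$ against the solution of the associated ODE to force $\V$ to blow up at a finite height controlled by $\m(f)^{1/(n-2)}$. The positivity $\sup(f) - h_0(f) > 0$ is immediate: by definition either $h_0(f) = \min(f) < \sup(f)$ (when $f$ is nonconstant, which holds for a genuine asymptotically hyperbolic graph), or $h_0(f)$ is the supremum of heights with small level-set area, and since $f < h_{\max}$ with all level sets $f^{-1}(h)$ having area growing to infinity as $h \to h_{\max}$ (because $\V(h) \to \infty$), we have $h_0(f)$ strictly below $h_{\max}$ and strictly below $\sup(f)$.

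The core of the argument is the differential inequality. First I would recall the expression for $\m(f)$ adapted from \cite{D-G-S} and rewrite it, using that $e = V^2 df \otimes df$ and that $f$ is balanced, as an integral over a level set (or coordinate sphere) that can be bounded below by a geometric quantity involving $\H^{n-1}(f^{-1}(h))$, the mean curvature $H$ of the level set in $\Hb^n$, and the warping factor $V = \cosh(r)$. Using condition (1) of $\mathcal{G}_n$ (upward-pointing mean curvature vector) one gets that level sets have nonnegative mean curvature, and using condition (2) (star-shaped, outer-minimizing level sets) together with the Minkowski-type inequality in $\Hb^n$ one converts the lower bound on $\m(f)$ into a bound of the form $\m(f) \geq c_n \int_{h_0(f)}^{\sup f} G(\V(h)) \, dh$ or pointwise $\V'(h) \geq F(\V(h))$ for a.e.\ $h \geq h_0(f)$, where $F$ has superlinear growth at infinity so that $\int^\infty dy / F(y) < \infty$. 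The precise form of $F$ (and the rescaling of $f$ needed to absorb the constants, as mentioned in the text) is where the bookkeeping lives; this is essentially \cite[Lemma 4.8]{C} and I would invoke or reconstruct that computation rather than redo every step.

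Once $\V'(h) \geq F(\V(h))$ holds for a.e.\ $h \geq h_0(f)$, I would let $Y$ solve $Y'(h) = F(Y(h))$ with $Y(h_0(f)) = \V(h_0(f))$; by the definition of $h_0(f)$, $\V(h_0(f))$ is controlled by $\max\{2\beta \w_{n-1} \m(f)^{(n-1)/(n-2)}, 2\beta\w_{n-1}\m(f)\}$. A standard comparison (ODE comparison / Grönwall-type argument applied to $1/F$) gives $Y(h) \leq \V(h)$ wherever both are defined, and separating variables shows $Y$ reaches $+\infty$ at height $h_0(f) + \int_{\V(h_0(f))}^\infty dy/F(y)$. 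Since $\V < \infty$ below $h_{\max}$ and $\sup(f) \leq h_{\max}$, we conclude $\sup(f) - h_0(f) \leq \int_{\V(h_0(f))}^\infty dy / F(y)$, and the explicit form of $F$ together with the bound on $\V(h_0(f))$ (and undoing the rescaling) yields the estimate $C(n)\, \m(f)^{1/(n-2)}$.

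The main obstacle I anticipate is the derivation of the sharp-enough differential inequality with the correct power of $\m(f)$: one must carefully convert the mass integral into a monotone quantity along the level sets, use the right rescaling of $f$ (so that the induced metric still has the mean-curvature-vector and scalar-curvature properties after scaling), and apply the hyperbolic Minkowski inequality to the star-shaped outer-minimizing level sets to get the superlinear $F$; the exponent $\tfrac{1}{n-2}$ emerges precisely from integrating $dy/F(y)$ near $y = \V(h_0(f)) \sim \m(f)$ and the scaling behavior, so any slack in $F$ would destroy the clean power. Everything else — positivity, the ODE comparison, the blow-up at finite height — is routine once the inequality is in hand.
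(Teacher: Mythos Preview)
The paper does not give its own proof of this statement; it is quoted from \cite[Lemma 4.8]{C}, and the paragraph preceding the theorem merely sketches the strategy (rescale $f$, derive $\V'(h)\geq F(\V(h))$ via nonnegative mean curvature of the level sets, the mass expression, and the Minkowski inequality, then compare with the ODE $Y'=F(Y)$ to force blow-up at a finite height). Your proposal follows this sketch essentially verbatim, so it is in line with the paper's indicated approach; the detailed verification of the differential inequality and the exponent bookkeeping that you flag as the main obstacle is precisely what is deferred to \cite{C}.
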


The stability of the hyperbolic positive mass theorem with respect to the flat distance reads as follows.

\begin{thm}[{\cite[Theorem 5.1]{C}}] \label{thm-main-C}
Let $n \geq 3$ and $f_j \in \mathcal G_n$ be a sequence of balanced asymptotically hyperbolic functions.
Assume that  $\lim_{j \to \infty}\m(f_j) = 0$.  Then after normalizing $\graph(f_j)$ so that $h_0(f_j) = 0$, we have 
\bee
\lim_{j \to \infty} d_{F}^{B^{\bar b}(\r)}([[\graph(f_j)]], [[\Hb^n \times \{ 0 \}]])=0,
\eee
where for any $\rho >0$, $B^{\bar b}(\r)  \subset \Hb^{n+1}$ is the $\bar b$-ball of radius $\rho$ centered at the origin, 
i.e., in terms of our coordinate system,  $B^{\bar b}(\r) =  \{  (r,\theta,s) \, |\,   \cosh^2(r)s^2 + \sinh^2(r) \leq \rho^2 \}$.
\end{thm}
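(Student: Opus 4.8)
The plan is to estimate the flat distance between the integral currents $[[\graph(f_j)]]$ and $[[\Hb^n\times\{0\}]]$, restricted to the fixed ball $B^{\bar b}(\r)$, by exhibiting an explicit filling, i.e.\ an integral $(n{+}1)$-current $A_j$ and an integral $n$-current $B_j$ with $[[\graph(f_j)]] - [[\Hb^n\times\{0\}]] = \pr A_j + B_j$ inside $B^{\bar b}(\r)$, and showing that $\mass(A_j) + \mass(B_j) \to 0$ as $\m(f_j)\to 0$. After the normalization $h_0(f_j) = 0$, split $\graph(f_j)$ into its \emph{upper} part $\{f_j \geq 0\}$ and its \emph{lower} part $\{f_j \le 0\}$, mirroring the two-sided structure of the estimates recalled above. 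For the upper part, use Theorem \ref{thm-fUpperB}: since $0 < \sup(f_j) - h_0(f_j) < C\,\m(f_j)^{1/(n-2)}$, the entire graph over $\{f_j \ge 0\}$ lies in a slab of $\bar b$-height $O(\m(f_j)^{1/(n-2)})$ above $\Hb^n\times\{0\}$, so the region swept out between $\graph(f_j)|_{\{f_j\ge 0\}}$ and its vertical projection to $\Hb^n\times\{0\}$ is an $(n{+}1)$-current whose mass is bounded by (height of the slab) $\times$ ($\bar b$-volume of the relevant base region in $B^b(\r)$, which is uniformly bounded), hence $O(\m(f_j)^{1/(n-2)}) \to 0$.

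For the lower part the argument is different: here one cannot bound the height, but by the definition of $h_0$ and the monotonicity of $\V$, every level set $f_j^{-1}(h)$ with $h \le h_0(f_j) = 0$ satisfies $\H^{n-1}(f_j^{-1}(h)) \le 2\beta\w_{n-1}\m(f_j)$ (for $\m(f_j)<1$). Decompose the current $[[\graph(f_j)|_{\{f_j\le 0\}}]] - [[\{x: \bar f_j(x)<0\}\times\{0\}]]$ using the coarea formula applied to the height function $s$: the boundary of the swept region over the sublevel set $\{\bar f_j < 0\}$ consists of the graph piece, a piece of $\Hb^n\times\{0\}$, and the ``vertical cylinder'' over $f_j^{-1}(0) \cap B^b(\r)$. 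The cylinder has $\bar b$-area $\le (\sup(f_j) - h_0(f_j))\cdot(\text{const})\cdot \H^{n-1}(f_j^{-1}(0))$, which is $O(\m(f_j)^{1/(n-2)}\cdot \m(f_j)) \to 0$; one also has to account for $U_j$, but $\vol(\pr U_j) \le 2\w_{n-1}\m(f_j)$ by Theorem \ref{RPI}, so the ``flat part'' over $U_j$ and the inner boundary contribute negligibly. It remains to bound the $(n{+}1)$-volume of the swept-out region over $\{\bar f_j < 0\}\cap B^b(\r)$: by the coarea formula this volume equals $\int_{-\infty}^{0} \H^{n}(\{\bar f_j < h\}\cap B^b(\r))\,dh$ weighted by $V$; since each horizontal slice has $\bar b$-volume uniformly bounded (it sits inside $B^b(\r)$) and, crucially, the \emph{perimeters} $\V(h) = P(\{\bar f_j < h\})$ are $\le 2\beta\w_{n-1}\m(f_j)$ for $h\le 0$, one shows the slab $\{\bar f_j < h\}\cap B^b(\r)$ has small $\bar b$-volume for $h\le 0$ via a relative isoperimetric/comparison inequality in $B^b(\r)$ (a set of small perimeter inside a fixed ball either has small volume or volume close to that of the ball, and the latter is ruled out because $\bar f_j \to 0$ at infinity forces $\{\bar f_j < h\}$ to stay away from $\pr B^b(\r)$ for $h<0$). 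Hence this term is $o(1)$ as well.

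Combining, $\mass(A_j) + \mass(B_j) = O(\m(f_j)^{1/(n-2)}) + o(1) \to 0$, which gives $d_F^{B^{\bar b}(\r)}([[\graph(f_j)]], [[\Hb^n\times\{0\}]]) \to 0$. I expect the main obstacle to be the lower-part estimate — specifically, controlling the $(n{+}1)$-volume of the region between $\graph(f_j)|_{\{f_j\le 0\}}$ and $\Hb^n\times\{0\}$: unlike the upper part, there is no height bound, so one must convert the small-perimeter information on the level sets $f_j^{-1}(h)$, $h\le 0$, into a small-volume statement, and this requires a careful isoperimetric-type comparison inside the fixed hyperbolic ball $B^b(\r)$ together with the fact that the sublevel sets $\{\bar f_j<h\}$ for $h<0$ are confined to a compact region because $\bar f_j\to 0$ at infinity. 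The bookkeeping for the inner region $U_j$ and the boundary $\pr U_j$ (in the minimal-boundary case) is handled cheaply by the Penrose-type inequality, Theorem \ref{RPI}. Everything else — the slab estimate for the upper part, the cylinder estimate, and the assembly of the filling currents — is routine once these pieces are in place. Presumably the actual proof in \cite{C} organizes this slightly differently, but the two-sided decomposition driven by $h_0(f_j)$ and the two recalled estimates (Theorem \ref{thm-fUpperB} and the level-set bound built into Definition \ref{defn-h0}) are the essential inputs.
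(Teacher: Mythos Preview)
Your proposal is correct and follows the same approach as the paper's proof sketch: explicit filling currents split at the height $h_0(f_j)=0$, with Theorem \ref{thm-fUpperB} controlling the upper slab, the level-set bound from Definition \ref{defn-h0} plus the isoperimetric inequality controlling the lower region, and Theorem \ref{RPI} handling $\partial U_j$. One simplification of what you anticipate as the main obstacle: the sublevel sets $\{\bar f_j<h\}$ for regular $h<0$ are already compact in $\Hb^n$ (they are bounded by the star-shaped level sets $f_j^{-1}(h)$), so Yau's isoperimetric inequality (Proposition \ref{prop-IsoIneq}) applies directly to give $\H^n(\{\bar f_j<h\})\le c_n\cdot 2\beta\omega_{n-1}\m(f_j)$ --- no relative isoperimetric or boundary-avoidance argument is needed, and the $s$-integration range is automatically bounded since $|s|\le\rho$ inside $B^{\bar b}(\rho)$.
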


Theorem \ref{thm-main-C} is proven  by explicitly choosing integral currents  $A_j$ and $B_j$ such that 
$A_j+ \partial B_j= [[\graph(f_j)]]- [[\Hb^n  \times \{ 0 \}]]$ in $B^{\bar b}(\r)$ and 
 $\mass(A_j) + \mass(B_j) \to 0$ as $j  \to 0$, which implies the conclusion. 
All this can be guaranteed by applying Theorem \ref{thm-fUpperB},  the isoperimetric inequality,  the definition of $h_0(f)$ and Theorem \ref{RPI}.

%%%%%%%%%%%%%%%%%%%%%%%%%%%%%%%%%%%%%%%%
%%%%%%%%%%%%%%%%%%%%%%%%%%%%%%%%%%%%%%%%
%%%%%%%%%%%%%%%%%%%%%%%%%%%%%%%%%%%%%%%%

\subsection{Integral currents, intrinsic flat distance and convergence of balls}\label{ssec-currents}

We now give a brief introduction to integral currents in metric spaces, integral current spaces and intrinsic flat distance.
For further details about integral currents in metric spaces we refer the reader to Ambrosio and Kirchheim \cite{AK}, Lang \cite{Lang}, and Lang and Wenger \cite{LaWe}. For the definition of integral current spaces and intrinsic flat distance between them we refer to Sormani and Wenger \cite{SW1,SW2}. For results about point convergence we refer to Sormani \cite{Sormani-AA} and, Huang, Lee and Perales \cite{HLP}.  
\bigskip

\subsubsection{Integral currents}\label{ssec-IntCur}

Given a complete metric space $(Z,d)$,  let  $\lip(Z)$  denote the set of Lipschitz functions on $Z$ and 
$\lip_b(Z)$ the bounded ones.  An $n$-dimensional current $T$ on $Z$ 
is a multilinear map $T: \lip_b(Z) \times [\lip(Z)]^{n}  \to \R$  that satisfies certain properties, see \cite[Definition 3.1]{AK}.  
The $n$-dimensional current $T$ endows $Z$ with a finite Borel measure, $||T||$,
called the mass measure of $T$ and  $\set(T)$ is the set of points in $Z$ where the $n$-dimensional lower density of 
$||T||$ is positive: 
\begin{equation}\label{def-set}
\set(T)= \left\{ z \in Z \, | \, \liminf_{r \downarrow 0} \tfrac{\|T\|(B(z,r))}{ r^n }> 0 \right\}.
\end{equation}
The mass of $T$ is defined as $\mass(T)=||T||(Z)$ and it is known that $\spt(T)=\spt(||T||)=\ovr{\set(T)}$.  For any Lipschitz function $\varphi: Z \to Y$
the push-forward of $T$ is the current ${\varphi}_{\sharp} T: \lip_b(Y) \times [\lip(Y)]^{n}  \to \R$ defined as
\bee
{\varphi}_{\sharp} T (f, \pi_1, \dots, \pi_n) 
= T( f\circ \varphi, \pi_1\circ\varphi,  \dots, \pi_n\circ\varphi).
\eee
The  boundary of $T$,  $\partial T: \lip_b(Z) \times [\lip(Z)]^{n-1}  \to \R$,  is the functional defined as
\bee
\partial T(f, \pi_1,...,\pi_{n-1}) = T(1, f, \pi_1,...,\pi_{n-1}),
\eee
where $1:  Z \to \R$  denotes the constant function equal to $1$.
For any Borel set $A \subset Z$, the restriction of $T$ to $A$, is the current $ T \rstr A: \lip_b(Z) \times [\lip(Z)]^{n}  \to \R$
given by 
\bee
 T   \rstr A ( f, \pi_1,...,\pi_{n}) = T( 1_A \, f, \pi_1,...,\pi_{n}), 
\eee
where $1_A:  Z \to \R$  denotes the indicator function of $A$. In this case $||T   \rstr A|| = ||T ||  \rstr A$ and so $\spt(T \rstr A) \subset \overline A$.

\begin{remark}\label{rmrk-restrictDom}
By  \cite[Proposition 3.3]{Lang}, the current $ T   \rstr A $ can be identified with a current defined in $(\overline{A}, d)$ and that we will denote in the same way,
that is,  $T\rstr A: \lip_b(\overline A) \times [\lip( \overline A)]^{n} \to \R$
(see also  \cite[Equation (3.6)]{AK}).
\end{remark}

The main examples of currents are the zero $n$-dimensional currents, that is, $T(f, \pi_1, \dots, \pi_n )=0$ for all $(f,\pi_1, \dots, \pi_n)$, and the ones given by 
\bee
\varphi_{\sharp} [[\theta]]
(f, \pi_1,..., \pi_n)=  \int_{A} \theta(x) f(\varphi(x))  \det( D_x(\pi_1\circ\varphi,  \dots , \pi_n\circ\varphi)) d\mathcal L^n(x),
\eee
where  $\varphi: A \to  Z$  is a Lipschitz function, $A \subset \R^n$ is a Borel set and, $\theta \in L^1(A, \mathbb R)$.
We will work with  $n$-dimensional integral currents $T$ which are $n$-dimensional currents that can be written as a sum 
of currents of the form $\varphi_{i \sharp} [[\theta_i]]$ as given above, with $\theta_i's$ integer valued, and so that $\bdry T$ is also a current.  The class consisting of these currents will be denoted as
$I_n(Z)$ and to be more precise we will sometimes write $I_n(Z, d)$.

\subsubsection{Integral current spaces and intrinsic flat distance}

An $n$-dimensional integral current space $Q=(X, d_X, T)$ consists of a metric space $(X, d_X)$ and an $n$-dimensional integral current, $T \in I_n(\ovr{X}, d_{\ovr{X}})$,
where $(\overline{X}, d_{\ovr{X}})$ is the metric completion of $(X,d_X)$, and such that $\set(T)=X$.   There is the notion of zero $n$-dimensional integral current space denoted as ${\bf{0} }=(X,d,T)$, here $T=0$ and $\set(T)=\emptyset$. We define $\mass(Q)=\mass(T)$, $\set(Q)=\set(T)$.

The boundary of $Q$, $\partial Q$, is an $(n-1)$-dimensional integral current space and is defined in the following way.
By Remark \ref{rmrk-restrictDom}, $\partial T : \lip_b(\ovr X ) \times [\lip(\ovr X)]^{n-1}  \to \R$ can be identified with a current that we denote in the same way, 
$\partial T : \lip_b(\spt( \bdry T )) \times [\lip(\spt( \bdry T) )]^{n-1}  \to \R$. With this identification
$$\bdry Q= \bdry (X,d_X,T) := (\set(\partial T), d_{\overline X}, \partial T ) \in I_{n-1}(  \spt(\bdry T),d_{\ovr{X}}).$$
 We remark that $\set(\partial T) \subset \ovr{X}$, and that the second entry of $\bdry Q$ is the metric of $\ovr{X}$ restricted to $\set(\partial T)$, which by abuse of notation we write as
 $d_{\overline X}$.

\begin{example}\label{ex-manifold-bdryCurrent}
Given an $n$-dimensional compact oriented Riemannian manifold $(M,g)$, we can define several integral current spaces.

\begin{enumerate}[(i)]
\item $(M,g)$ can be regarded as an $n$-dimensional integral current space,  $(M,d_M,[[M]])$, so that the mass measure of $[[M]]$ equals $\dvol_g$ and $\set([[M]]) = M$.
Indeed, let $d_M$ be the length metric induced by the Riemannian metric $g$.  Choose a locally finite atlas $\{(U_i, \varphi_i)\}_{i \in \mathbb N}$ of $M$ consisting of positively oriented Lipschitz charts and let $\{\psi_i\}$ be a Lipschitz partition of unity of $M$ with respect to this atlas
such that $\spt (\psi_i) \subset U_i  \subset M$. Then for $(f,\pi) \in  \lip_b(M) \times [\lip(M)]^{n}$ we define
\begin{align*}\label{eq-canonicalT}
[[M]] (f,\pi)= &  \sum_{i=1}^\infty {\varphi_i^{-1}}_\sharp [[\psi_i \circ \varphi_i^{-1}]] (f, \pi)\\
= & \sum_{i=1}^\infty  \int_{\varphi_i(U_i)}  ( \psi_i \circ \varphi_i^{-1}) (f \circ \varphi_i^{-1} )  \det( D(\pi \circ \varphi_i^{-1})) d\mathcal L^n. \nonumber
\end{align*}

\item $\partial M$ can be regarded as an $(n-1)$-dimensional integral current space $(\partial M,  d_M,  [[\partial M]] )$. 
Taking $(f,\pi) \in  \lip_b(M) \times [\lip(M)]^{n-1}$,  
\begin{align*}
\partial [[M]] (f,\pi)= &  \sum_{i=1}^\infty {\varphi_i^{-1}}_\sharp [[\psi_i \circ \varphi_i^{-1}]] (1, f, \pi)\\
= & \sum_{i=1}^\infty  \int_{\varphi_i(U_i)}  \psi_i \circ \varphi_i^{-1}(x) \det( D_x((f,\pi) \circ \varphi_i^{-1})) d\mathcal L^n(x) \\
= & \sum_{i=1}^\infty  \int_{\varphi_i(U_i \cap \partial M)}  (\psi_i \circ \varphi_i^{-1}(x))(f \circ  \varphi_i^{-1}(x)) \det( D_x(\pi \circ \varphi_i^{-1})) d\mathcal L^{n-1}(x) 
\end{align*}
so by Remark \ref{rmrk-restrictDom} and abusing notation, we write  $[[\partial M]]= \partial [[M]] \rstr \partial M \in I_{n-1}(\partial M, d_M)$, i.e., the orientation of $M$ passes to $\partial M$ and the atlas of bilipschitz maps are taken with respect to the distance of $M$, and so we write $\partial (M,d_M, [[M]])= (\partial M,  d_M,  [[\partial M]] )$.

\item Given an $n$-dimensional compact submanifold $Y$ of $M$, with possibly non-empty boundary and finite area, we can endow it with the restricted Riemannian metric, $g|_{Y}$,
and the orientation of $M$.  Then as in $(i)$ we get an integral current space $(Y, \di_Y, [[Y]])$ so that $[[Y]] \in I_n(Y, \di_Y)$. We reiterate our notational convention of denoting the intrinsic length distance on $Y$, obtained from considering $(Y,g|_{Y})$ as a Riemannian manifold, by $\di_Y$. If $Y$ is a totally geodesic submanifold of $M$, then $d_M=\di_Y$, but in general $d_M\neq \di_Y$ and it will be important for us to keep track of which distance we are using.

\item If we do not endow $M$ with a Riemannian metric we can still define $[[M]] \rstr Y \in I_n(M, d_M)$, which is given by
\bee
[[M]]\rstr Y (f,\pi)=  \sum_{i=1}^\infty  \int_{\varphi_i(U_i \cap Y)}  ( \psi_i \circ \varphi_i^{-1}) (f \circ \varphi_i^{-1} )  \det( D(\pi \circ \varphi_i^{-1})) d\mathcal L^n. \nonumber
\eee
By Remark \ref{rmrk-restrictDom} and abusing notation, 
 we let $[[Y]]:= [[M]] \rstr Y   \in  I_n(Y, d_M)$
 i.e., the orientation of $M$ passes to $Y$ and the atlas of bilipschitz maps are taken with respect to the distance of M. 
Then  $(Y,d_M,[[Y]])$ is an $n$-dimensional integral current space. 
\end{enumerate}
\end{example}

\begin{example}\label{ex-submanifoldCurrent}
In this article we will work with complete, oriented, non compact Riemannian manifolds $(M^n,g)$. Note that we cannot define $[[M]]$ as in the previous example because Ambrosio--Kirccheim currents require the mass measure to be finite and for $M=\Hb^n, \graph(f)$, for example, this would not be true, though there exist other definitions of currents by Lang and Lang--Wenger \cite{Lang, LaWe} that allow currents to have locally finite measure.
Nonetheless, for $n$-dimensional compact submanifolds $Y  \subset M$ with possibly non-empty boundary we can consider $(Y, \di_Y, [[Y]])$, with  $[[Y]] \in I_n(Y, \di_Y)$, and
$(Y,d_M,[[Y]])$, where $[[Y]]  \in  I_n(Y, d_M)$, as in the previous example. In the latter, we still have $\set([[Y]]) = Y$ and $\partial (Y,d_M,[[Y]])= (\partial Y,  d_M,  [[\partial Y]])$ with $ [[\partial Y]] \in  I_{n-1}(\partial Y, d_M)$.
 	\end{example}

	\begin{example}[{\cite[Lemma 2.34, Lemma 2.35, Remark 2.37]{Sormani-AA}}]\label{lem-ball}
Given an $n$-dimensional integral current space $Q=(X, d_X, T)$. Then for any $p \in X$ and for almost every $r>0$, 
$||T||( d_p^{-1} (r)) + ||\partial T||( d_p^{-1} (r))=0$. For those $r$ it holds 
\begin{itemize}
\item $T \rstr B(p,r)= T \rstr \overline{B(p,r)}$
\item $S(p,r) : = ( \set(T \rstr B(p,r)), d_X, T \rstr B(p,r)) \in I_n(\overline{B(p,r)}, d_{\ovr X} )$
\item $B(p,r) \subset \set(T \rstr B(p,r)) \subset \overline{B(p,r)}$
\item $S^c(p,r):= (\set(T \rstr X\setminus B(p,r)), d_X, T \rstr X\setminus B(p,r) ) \in  I_n( X \setminus B(p,r), d_{\ovr X} ),$
\end{itemize}
where $d_p: X \to \R$ is the function $d_p(x)=d_X(p,x)$, $x \in X$, and we had use Remark \ref{rmrk-restrictDom}.

Furthermore, for $Q=(M, d_M, [[M]])$ as in Example \ref{ex-manifold-bdryCurrent}, 
 $\set(T \rstr B(p,r))=\overline{B(p,r)}$. Hence, for a.e. $r>0$,  $S(p,r) = (\overline{B(p,r)}, d_X, T \rstr \overline{B(p,r)})$.
\end{example}

We say that an integral current space $(X,d,T)$ is precompact if $(X,d)$ is precompact. The definition of intrinsic flat distance is as follows.

\begin{definition}[{\cite[Definition 1.1]{SW2}}]\label{IF-defn} 
Given two $n$-dimensional precompact integral current spaces, $(X_1, d_1, T_1)$ and $(X_2, d_2,T_2)$, the 
intrinsic flat 
distance between them is defined as
\bee
d_{\mathcal{F}}\left( (X_1, d_1, T_1), (X_2, d_2, T_2)\right)=\inf\left\{d_F^Z(\varphi_{1\sharp}T_1, \varphi_{2\sharp}T_2):\,\,\varphi_j: X_j \to Z \right\},
\eee
where the infimum is taken over all  complete metric
spaces $Z$ and all metric isometric embeddings  $\varphi_j$.
The flat distance between two $n$-dimensional integral currents $T_1, T_2  \in  \intcurr_{n} (Z)$, $d_{F}^Z$, is defined as
\bee
d_{F}^Z( T_1, T_2)=\inf  \left\{  \mass(U)+ \mass(V) \, : \,  U \in \intcurr_{n}(Z), \,\,\,  V   \in  \intcurr_{n+1} (Z),\,\, T_2 -T_1=  U + \bdry V       \right\}.
\eee
\end{definition}

The function $d_{\mathcal{F}}$ is a distance up to current preserving isometries {\cite[Theorem 3.27]{SW2}}. 
Hence,
$$d_{\mathcal{F}}((X_1, d_1, T_1), (X_2, d_2, T_2))=0$$ if and only if  there exists  
$\varphi: X_1 \to X_2$ metric isometry such that $\varphi_\sharp T_1=T_2$. In this case, we identify both  integral currents spaces 
\begin{equation}\label{eq-equalCurrSp}
(X_1, d_1, T_1)= (X_2, d_2, T_2),
\end{equation}
and so $d_{\mathcal{F}}$ is a distance in the space of equivalence classes with this relation. 
If $M_i$ are compact oriented Riemannian manifolds,  $(M_1, d_{M_1}, [[M_1]])= (M_2, d_{M_2}, [[M_2]])$ if and only if
there is a Riemannian isometry between $M_1$ and $M_2$ that preserves their orientation.

Wenger proved the following compactness theorem.

\begin{thm}[{\cite[Theorem 1.2]{Wenger-compactness}}]\label{thm-Wenger-compactness}  
Let $D, V, A > 0$ and let $\{(X_j, d_j, T_j)\}_{j=1}^\infty$
be a sequence of $n$-dimensional integral current spaces 
such that
\bee
\diam(X_j) \le D, \quad \mass(T_j)\le V \textrm{ and } \mass(\partial T_j) \le A.
\eee
Then there exists a subsequence $\{(X_{j_k}, d_{j_k}, T_{j_k})\}_{k=1}^{\infty}$  and an $n$-dimensional 
 integral current space $(X_\infty, d_\infty, T_\infty)$
such that 
\bee
\lim_{k\to\infty}d_{\mathcal{F}}((X_{j_k}, d_{j_k}, T_{j_k}),(X_\infty, d_\infty, T_\infty))=0.
\eee
\end{thm}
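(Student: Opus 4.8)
The plan is to deduce Theorem~\ref{thm-Wenger-compactness} from three by now standard tools: \emph{(a)} Wenger's isoperimetric inequality for integral cycles in Banach spaces; \emph{(b)} the Ambrosio--Kirchheim closure and compactness theorems for integral currents with supports in a fixed compact set; and \emph{(c)} the equivalence of weak and flat convergence for sequences of integral currents with uniformly bounded mass and boundary mass. The conceptual heart is that the three uniform bounds in the hypothesis, although they control the intrinsic geometry of the $X_j$ only very weakly, nonetheless suffice to realize \emph{all} the $T_j$ --- after an isometric change of ambient space that does not affect their intrinsic metrics --- inside one compact metric space whose covering numbers are bounded uniformly in $j$ by a function of $n,D,V,A$. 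Once this is arranged, \emph{(b)} produces a weak limit current and \emph{(c)} upgrades weak convergence to flat convergence, which gives the conclusion.

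First I would isometrically embed each $(X_j,d_j)$ into the fixed Banach space $\ell^\infty$ via the Kuratowski embedding, and view $T_j\in\intcurr_{n}(\ell^\infty)$ by push-forward; this preserves $d_j$ and leaves $\mass(T_j)\le V$, $\mass(\partial T_j)\le A$ and $\diam(\spt T_j)\le D$ unchanged. Next, using \emph{(a)} together with a cone construction off the boundary $\partial T_j$ (an $(n-1)$-cycle), I would write $T_j=\partial S_j+C_j$ with $S_j\in\intcurr_{n+1}(\ell^\infty)$, $C_j\in\intcurr_{n}(\ell^\infty)$, $\partial C_j=\partial T_j$, $\mass(S_j)+\mass(C_j)\le\Lambda(n,D,V,A)$, and $\spt S_j\cup\spt C_j$ contained in a ball of radius $\le C(D)$ about a point of $\spt T_j$; since $\spt T_j\subseteq\spt S_j\cup\spt C_j\asdefined W_j$, it then remains to control the covering numbers of $W_j$. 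This is the technical heart: choosing $S_j$ (and $C_j$) to be \emph{almost mass-minimizing} fillings, one has a uniform lower density bound $\|S_j\|(B(x,r))\ge c_{n+1}r^{n+1}$ (and the analogous bound for $C_j$) at \emph{every} point $x$ of the corresponding support, so a disjoint-balls argument turns the mass bounds into $N_{W_j}(\varepsilon)\le C(n,D,V,A)\,\varepsilon^{-(n+1)}$; since $W_j$ is also closed in the complete space $\ell^\infty$ it is then compact, and together with $\diam(W_j)\le C(D)$ it is a uniformly ``small'' compact space (in particular $\spt T_j$ is compact as well).

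By Gromov's compactness theorem, a subsequence of the compact spaces $(W_j,d)$ converges in the Gromov--Hausdorff sense to a compact metric space, and since every separable metric space embeds isometrically into $\ell^\infty$ we may realize this convergence inside a further copy of $\ell^\infty$: after re-embedding (still isometrically on each $(X_j,d_j)$) and passing to a subsequence, all $W_j$ and the Gromov--Hausdorff limit $W_\infty$ sit inside a common compact set $Z\subset\ell^\infty$ with $W_j\to W_\infty$ in Hausdorff distance. In $Z\subset\ell^\infty$ the currents $T_j,C_j,S_j$ have uniformly bounded mass and boundary mass and supports inside the fixed compact set $Z$, so by \emph{(b)} a further subsequence converges weakly: $T_j\rightharpoonup T_\infty$, $C_j\rightharpoonup C_\infty$, $S_j\rightharpoonup S_\infty$, with $T_\infty\in\intcurr_{n}(\ell^\infty)$, $\mass(T_\infty)\le V$ and $\mass(\partial T_\infty)\le A$ by lower semicontinuity, $\partial S_\infty=T_\infty-C_\infty$, $\partial C_\infty=\partial T_\infty$, and all supports in $W_\infty$. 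Setting $(X_\infty,d_\infty,T_\infty)\defeq\bigl(\set(T_\infty),\,d_Z|_{\set(T_\infty)},\,T_\infty\bigr)$ gives an $n$-dimensional integral current space (the zero one if $T_\infty=0$), our candidate limit.

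Finally, since $T_j$ and $T_\infty$ all lie in $\ell^\infty$ with uniformly bounded mass and boundary mass, tool \emph{(c)} upgrades $T_j\rightharpoonup T_\infty$ to $d_F^{\ell^\infty}(T_j,T_\infty)\to0$; as the (re-embedded) Kuratowski maps are metric isometric embeddings of $(X_j,d_j)$ and $(X_\infty,d_\infty)$ into $\ell^\infty$ taking $T_j$ and $T_\infty$ to themselves, Definition~\ref{IF-defn} yields $d_{\mathcal{F}}\bigl((X_j,d_j,T_j),(X_\infty,d_\infty,T_\infty)\bigr)\le d_F^{\ell^\infty}(T_j,T_\infty)\to0$, as required. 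I expect the main obstacle to be precisely the second paragraph: extracting from the weak hypotheses a single compact ambient space with covering numbers controlled only by $n,D,V,A$ --- equivalently, producing fillings whose supports (hence a neighbourhood of each $\spt T_j$) carry a genuine everywhere lower density bound --- so that Gromov's theorem can be applied uniformly; the remaining steps are routine applications of \emph{(a)}--\emph{(c)}, Gromov compactness, and the definitions.
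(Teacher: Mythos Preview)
The paper does not prove Theorem~\ref{thm-Wenger-compactness}: it is stated there as a quotation of \cite[Theorem~1.2]{Wenger-compactness} and used as a black box (in Lemma~\ref{lem-inner-gone} and implicitly through Theorem~\ref{convBdry}). So there is no ``paper's own proof'' to compare your attempt against.

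That said, your outline is a faithful high-level summary of Wenger's original argument: embed into a common Banach space, replace each $T_j$ by a decomposition $T_j=\partial S_j+C_j$ with controlled mass obtained from the isoperimetric inequality and a cone over $\partial T_j$, use lower density of (almost) minimizing fillings to bound covering numbers of the supports uniformly in $n,D,V,A$, invoke Gromov compactness to realize a subsequence in a fixed compact $Z$, apply Ambrosio--Kirchheim closure/compactness to extract a weak limit, and upgrade weak to flat convergence via the known equivalence under uniform mass and boundary-mass bounds. You correctly identify the second paragraph as the technical heart. One point to be careful about: the everywhere lower density bound for $S_j$ and $C_j$ does not follow from almost-minimality alone without an argument (in Wenger's paper this is handled via a thick--thin decomposition rather than by asserting density bounds on the full support), so if you were to write this out you would either have to reproduce that decomposition or justify the density bound for your specific choice of fillings. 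Apart from that, the sketch is sound and matches the literature.
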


We will sometimes use the notation  $(X_{j}, d_{j}, T_{j})   \IFto  (X_\infty, d_\infty, T_\infty)$ to denote convergence of the sequence 
$(X_{j}, d_{j}, T_{j})$  to $(X_\infty, d_\infty, T_\infty)$ in the intrinsic flat sense.  Intrinsic flat converging sequences have intrinsic flat converging boundaries 
and the mass functional is lower semicontinuous with respect to this distance.

\begin{thm}[{\cite[Theorem 4.6, Remark 3.3, Remark 3.22]{SW2}}]\label{converge}
Let  $\{(X_j, d_j, T_j)\}_{j=1}^\infty$ be a sequence of integral current spaces that converges in the intrinsic flat sense to
$(X_\infty, d_\infty, T_\infty)$, then
 \begin{enumerate}
 \item $\lim_{j\to\infty} d_{\mathcal{F}}( \bdry (X_j, d_j, T_j) , \bdry (X_\infty, d_\infty, T_\infty))=0$
\item $\mass(T_\infty)\le \liminf_{j\to\infty}\mass(T_j)$
\item If $\lim_{j \to 0}\mass(T_j)=0$, then $(X_\infty, d_\infty, T_\infty)={\bf{0}}$. 
\end{enumerate}
\end{thm}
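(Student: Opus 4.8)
The statement to prove is Theorem \ref{converge}, which collects three standard facts about intrinsic flat convergence: convergence of boundaries, lower semicontinuity of mass, and the fact that a sequence of currents whose masses go to zero converges to the zero integral current space. These are attributed to \cite[Theorem 4.6, Remark 3.3, Remark 3.22]{SW2}, so the "proof" here is really a matter of assembling the pieces from Sormani--Wenger's foundational paper.

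\medskip

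\noindent\textbf{Proof proposal.}

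The plan is to reduce everything to the behavior of the Federer--Fleming flat distance $d_F^Z$ on a common complete metric space $Z$, exploiting the fact that, by definition of $d_{\mathcal F}$, intrinsic flat convergence $(X_j,d_j,T_j)\IFto (X_\infty,d_\infty,T_\infty)$ gives, for each $j$, metric isometric embeddings $\varphi_j\colon X_j\to Z_j$, $\varphi_\infty\colon X_\infty\to Z_j$ into complete spaces $Z_j$ with $d_F^{Z_j}(\varphi_{j\sharp}T_j,\varphi_{\infty\sharp}T_\infty)\to 0$; a standard argument (gluing the $Z_j$ along the common copy of $X_\infty$, as in \cite{SW2}) lets us assume all the embeddings land in a single complete metric space $Z$. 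So fix such a $Z$ with $d_F^Z(\varphi_{j\sharp}T_j,\varphi_{\infty\sharp}T_\infty)\to 0$, and set $S_j\defeq\varphi_{j\sharp}T_j$, $S_\infty\defeq\varphi_{\infty\sharp}T_\infty\in I_n(Z)$.

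First I would prove (1). Since $S_j-S_\infty = U_j+\partial V_j$ with $\mass(U_j)+\mass(V_j)\to 0$, taking boundaries gives $\partial S_j-\partial S_\infty = \partial U_j$ (the $\partial^2 V_j$ term vanishes), hence $d_F^Z(\partial S_j,\partial S_\infty)\le \mass(\partial U_j)\le \mass(U_j)+\mass(\partial U_j)$. The one subtlety is that $\mass(\partial U_j)$ need not be small just because $\mass(U_j)$ is; this is handled by the standard refinement (used in \cite{SW2}) in which one may choose the filling pair so that $V_j$ itself has controlled boundary mass, or equivalently by writing $\partial S_j - \partial S_\infty = \partial U_j + \partial(\text{something of small mass})$ after a further decomposition --- concretely, applying the deformation/filling theorem to $U_j$ to replace it by $U_j'+\partial W_j$ with $\mass(\partial U_j') + \mass(W_j)$ small. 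Since $\partial(\varphi_{j\sharp}T_j)=\varphi_{j\sharp}(\partial T_j)$ and $\varphi_j$ restricts to a metric isometric embedding of the boundary space, this yields $d_{\mathcal F}(\bdry(X_j,d_j,T_j),\bdry(X_\infty,d_\infty,T_\infty))\le d_F^Z(\partial S_j,\partial S_\infty)\to 0$, which is (1).

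Next, (2) follows from lower semicontinuity of mass under flat convergence on a fixed $Z$: if $S_j\to S_\infty$ in the flat norm and all lie in $I_n(Z)$, then $\mass(S_\infty)\le\liminf_j\mass(S_j)$. This is because mass is the supremum, over finite families of tuples $(f,\pi_1,\dots,\pi_n)$ with appropriate Lipschitz bounds, of $\sum S(f,\pi)$, and each such functional is continuous under the flat norm (it changes by at most $\mass(U_j)+n\cdot\mathrm{Lip}\cdot\mass(V_j)$ type terms when passing from $S_j$ to $S_\infty$); taking sup after $\liminf$ gives the inequality. Since $\mass$ is preserved by the push-forwards $\varphi_{j\sharp},\varphi_{\infty\sharp}$ (they are metric isometric embeddings), $\mass(T_\infty)=\mass(S_\infty)\le\liminf\mass(S_j)=\liminf\mass(T_j)$. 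Finally, (3) is immediate from (2): if $\mass(T_j)\to 0$ then $\mass(T_\infty)\le\liminf\mass(T_j)=0$, so $T_\infty=0$ as an integral current, and therefore $\set(T_\infty)=\emptyset$, i.e. $(X_\infty,d_\infty,T_\infty)=\mathbf 0$.

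\medskip

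\noindent The main obstacle is really item (1): making precise the reduction to a \emph{single} ambient space $Z$ (gluing the sequence of $Z_j$'s) and controlling $\mass(\partial U_j)$ rather than just $\mass(U_j)$, i.e. upgrading an arbitrary near-optimal filling pair to one whose excess piece has small boundary mass as well. Both are exactly the technical heart of \cite{SW2}, so in the write-up I would simply cite \cite[Theorem 4.6]{SW2} (together with \cite[Remarks 3.3, 3.22]{SW2}) for (1)--(2) and note that (3) is a one-line consequence of (2).
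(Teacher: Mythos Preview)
The paper does not prove this theorem at all; it is stated purely as a background result with citation to \cite{SW2}, so there is no ``paper's own proof'' to compare against. That said, your sketch of (2) and (3) is fine, but your treatment of (1) contains a self-inflicted difficulty that is worth correcting.

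You write $\partial S_j-\partial S_\infty=\partial U_j$ and then worry about bounding $\mass(\partial U_j)$, invoking deformation/filling arguments to replace $U_j$ by something with small boundary mass. This is unnecessary: in the flat distance for $(n-1)$-currents one seeks $A\in I_{n-1}(Z)$ and $B\in I_n(Z)$ with $\partial S_j-\partial S_\infty=A+\partial B$, and the pair $(A,B)=(0,U_j)$ already does the job. Hence
\[
d_F^Z(\partial S_j,\partial S_\infty)\le \mass(0)+\mass(U_j)=\mass(U_j)\longrightarrow 0,
\]
with no need to control $\mass(\partial U_j)$ or to appeal to any deformation theorem. So the ``main obstacle'' you identify in (1) is not an obstacle at all; once you see that $U_j$ itself is the filling for the boundary difference, the argument is one line. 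With that fix, and simply citing \cite{SW2} for the gluing into a common $Z$ (Theorem \ref{thm-IFiffF} in the paper) and for lower semicontinuity of mass, your proposal is correct.
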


\subsubsection{Convergence of points and balls}\label{section:point-convergence}

Convergence of points and "balls" under intrinsic flat distance is more subtle than when using Gromov-Hausdorff distance. With intrinsic flat distance we can have 
sequences of points disappearing in the limit or balls converging to the zero integral current space (see some examples of this in  {\cite[Appendix]{SW2}}).  
Here we present several results from Sormani and Huang--Lee--Perales \cite{Sormani-AA, HLP} that will be used in the proof of Theorem \ref{thm-MainB}. 
For historical reasons we start defining convergence of points for GH converging sequences.

\begin{thm}[{\cite[Section 6]{Gromov-metric}}]\label{thm-GromovEmbedding}
A sequence of compact metric spaces $(Y_j, d_j)$, $j \in \mathbb N$,  converges in Gromov-Hausdorff sense to a compact metric space $(Y_\infty, d_\infty)$ if and only if there exist a  
compact metric space $(Z, d)$ and isometric embeddings $\varphi_j:Y_j \rightarrow Z$,  $j\in\mathbb{N}\cup\{\infty\}$,  such that 
 $\varphi_j(Y_j)$ converges with respect to the Hausdorff distance to  $\varphi_\infty(Y_\infty)$.
 \end{thm}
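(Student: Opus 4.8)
The ``if'' direction is immediate: if such $(Z,d)$ and embeddings $\varphi_j$ exist, then for each finite $j$ one has $d_{GH}((Y_j,d_j),(Y_\infty,d_\infty)) \le d_H^Z(\varphi_j(Y_j),\varphi_\infty(Y_\infty))$, and the right-hand side tends to $0$ by hypothesis, so $(Y_j,d_j)\GHto(Y_\infty,d_\infty)$. The content is in the ``only if'' direction. First I would recall the standard reformulation of Gromov--Hausdorff convergence in terms of admissible metrics: since $d_{GH}((Y_j,d_j),(Y_\infty,d_\infty))\to 0$, we may pick a null sequence $\veps_j\downarrow 0$ and, for each $j$, a metric $\rho_j$ on the disjoint union $W_j := Y_j\sqcup Y_\infty$ which restricts to $d_j$ on $Y_j$ and to $d_\infty$ on $Y_\infty$ and satisfies $d_H^{\rho_j}(Y_j,Y_\infty)<\veps_j$. (Here one uses that the $Y_j$ are genuine metric spaces, so after an arbitrarily small perturbation each $\rho_j$ can be taken to be a genuine metric on $W_j$, not merely a pseudometric.)

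Next I would build the ambient space by gluing all the $W_j$ along the common copy of $Y_\infty$. Concretely, set $Z_0 := Y_\infty \sqcup \bigsqcup_{j\in\mathbb N} Y_j$ and define $d$ on $Z_0$ as follows: on $Y_\infty$ use $d_\infty$; on each $Y_j$ use $d_j$; for $x\in Y_j$ and $z\in Y_\infty$ use $\rho_j(x,z)$; and for $x\in Y_j$, $y\in Y_k$ with $j\neq k$ put
\[
d(x,y) := \inf_{z\in Y_\infty}\bigl(\rho_j(x,z)+\rho_k(z,y)\bigr).
\]
The infimum is attained because $Y_\infty$ is compact. Symmetry is clear, and I would verify the triangle inequality by a case analysis according to which of the pieces $Y_\infty$, $Y_j$, $Y_k$, $Y_l$ the three points lie in; the role of $Y_\infty$ as a ``hub'' through which cross-piece distances are routed makes each case reduce to the triangle inequalities for the $\rho_j$'s. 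That $d$ is a genuine metric (not just a pseudometric) follows because $\rho_j$ is a genuine metric on the disjoint union $W_j$, so $\rho_j(x,z)>0$ whenever $x\in Y_j$ and $z\in Y_\infty$; hence $d(x,y)=0$ with $x\in Y_j$, $y\in Y_k$, $j\neq k$ is impossible.

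Finally I would let $(Z,d)$ be the metric completion of $(Z_0,d)$ and let $\varphi_j\colon Y_j\to Z$, $\varphi_\infty\colon Y_\infty\to Z$ be the canonical inclusions, which are isometric embeddings by construction. By definition of $\rho_j$ we have $d_H^Z(\varphi_j(Y_j),\varphi_\infty(Y_\infty))<\veps_j\to 0$, so $\varphi_j(Y_j)$ converges to $\varphi_\infty(Y_\infty)$ in the Hausdorff distance on $Z$. It remains to check that $Z$ is compact: given $\delta>0$, choose $N$ with $\veps_j<\delta$ for $j>N$; then every $\varphi_j(Y_j)$ with $j>N$ lies in the $\delta$-neighborhood of $\varphi_\infty(Y_\infty)$, so $Z_0$ lies in the $\delta$-neighborhood of the finite union $\varphi_\infty(Y_\infty)\cup\bigcup_{j\le N}\varphi_j(Y_j)$, which is totally bounded as a finite union of totally bounded sets; hence $Z_0$ is totally bounded and its completion $Z$ is compact. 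The main obstacle is purely bookkeeping: carefully running the triangle-inequality case analysis for the glued metric $d$ (and confirming genuineness of the metric), while the compactness and Hausdorff-convergence claims are then short consequences of the defining estimates on $\rho_j$.
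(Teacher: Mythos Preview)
The paper does not give its own proof of this theorem; it is stated as background with a citation to Gromov's book and used as a black box. Your argument is the standard gluing construction and is correct: the admissible metrics $\rho_j$ on $Y_j\sqcup Y_\infty$ exist by the definition of $d_{GH}$, the hub metric on $Z_0$ satisfies the triangle inequality by the routine case analysis you indicate, and total boundedness of $Z_0$ follows exactly as you describe. One minor remark: the completion step is harmless but in fact unnecessary, since any Cauchy sequence in $Z_0$ that visits infinitely many pieces $Y_j$ must, by the hub definition of $d$, be accompanied by a Cauchy sequence in the compact $Y_\infty$ and hence already converges in $Z_0$.
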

 
When keeping track of the embeddings and the space, we write: 
\bee
Y_j  \xrightarrow[Z, \varphi_j]{\>\mathrm{GH}\>}   Y_\infty. 
\eee

\begin{definition}[Gromov]
Let $(Y_j, d_j)$, $j \in \mathbb N$, be a sequence of compact metric spaces that converges in Gromov-Hausdorff sense to the compact metric space $(Y_\infty, d_\infty)$.
Let $y_j\in Y_j$, $j \in \mathbb N$, and $y \in Y_\infty$. We say that $\{y_j\}$ converges to $y$, $y_j \to y$, if there exist a compact metric space $(Z,d)$ and isometric 
embeddings $\varphi_j:Y_j \rightarrow Z$,  $j\in\mathbb{N}\cup\{\infty\}$, as in the previous theorem, such that $d_Z(\varphi_j(y_j), \varphi_\infty(y))  \to 0$. 
\end{definition}

Since it will be important to keep track of the embeddings and space, in the previous case, we wite:
\bee
(Y_j, y_j)   \xrightarrow[Z, \varphi_j]{\>\mathrm{GH}\>}   (Y_\infty, y). 
\eee

For intrinsic flat convergence there is a similar result.

\begin{thm}[{\cite[Theorem 4.2]{SW2}}]\label{thm-IFiffF}
A sequence of precompact $n$-dimensional integral current spaces $(Y_j, d_j, T_j)$,
 $j \in \mathbb N$,  converges in intrinsic flat sense to the precompact $n$-dimensional integral current space $(Y_\infty, d_\infty, T_\infty)$
 if and only if there exist a  complete and separable metric space $(Z, d)$ and isometric embeddings $\varphi_j:Y_j \rightarrow Z$, $j\in\mathbb{N}\cup\{\infty\}$,  such that 
$\varphi_{j \sharp}(T_j)\to \varphi_{\infty \sharp}(T_\infty)$ in the flat sense in $Z$.
\end{thm}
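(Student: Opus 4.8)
The plan is to treat the two implications separately. The ``if'' direction is essentially tautological: if there exist a complete separable metric space $(Z,d)$ and metric isometric embeddings $\varphi_j\colon Y_j\to Z$ for $j\in\mathbb N\cup\{\infty\}$ with $\varphi_{j\sharp}T_j\to\varphi_{\infty\sharp}T_\infty$ in the flat sense in $Z$, then feeding exactly this $Z$ and the pair of embeddings $\varphi_j,\varphi_\infty$ into Definition \ref{IF-defn} gives
\[
d_{\mathcal F}\big((Y_j,d_j,T_j),(Y_\infty,d_\infty,T_\infty)\big)\le d_F^Z(\varphi_{j\sharp}T_j,\varphi_{\infty\sharp}T_\infty)\longrightarrow 0 .
\]

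The substance is the ``only if'' direction, where the problem is to replace the sequence of auxiliary spaces produced by the definition of $d_{\mathcal F}$ by one single ambient space. By hypothesis and Definition \ref{IF-defn}, for each $j$ there are a complete metric space $Z_j$ and metric isometric embeddings $\iota_j\colon Y_j\hookrightarrow Z_j$, $\kappa_j\colon Y_\infty\hookrightarrow Z_j$ with $d_F^{Z_j}(\iota_{j\sharp}T_j,\kappa_{j\sharp}T_\infty)<2^{-j}$; after shrinking $Z_j$ to the closure of the union of $\iota_j(Y_j)$, $\kappa_j(Y_\infty)$ and the supports of the $n$- and $(n+1)$-currents realizing this flat bound, we may assume each $Z_j$ is complete and separable. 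The idea is then to glue all the $Z_j$ along the common isometric copy of $\overline{Y_\infty}$: on the quotient $W=\big(\bigsqcup_j Z_j\big)/\!\sim$, where $\kappa_j(y)\sim\kappa_k(y)$ for all $y\in Y_\infty$, put $d(x,x')=d_{Z_j}(x,x')$ if $x,x'$ lie in the same $Z_j$, and $d(x,x')=\inf_{y\in\overline{Y_\infty}}\big(d_{Z_j}(x,\kappa_j(y))+d_{Z_k}(\kappa_k(y),x')\big)$ if $x\in Z_j$ and $x'\in Z_k$ with $j\ne k$.

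The key verification is that $d$ is a genuine metric on $W$ and that each inclusion $Z_j\hookrightarrow(W,d)$ is a metric isometric embedding. The triangle inequality for the mixed cases reduces, after applying the triangle inequality inside whichever $Z$-piece is traversed and using that $\overline{Y_\infty}$ is \emph{isometrically} embedded in every $Z_k$, to a ``no shortcut'' observation: a detour from $Z_j$ out to another piece and back can only re-enter $Z_j$ through $\overline{Y_\infty}$, and such a detour is never shorter than the direct $d_{Z_j}$-distance, so $d|_{Z_j\times Z_j}=d_{Z_j}$. One then takes $Z$ to be the metric completion of $(W,d)$, which is still separable, sets $\varphi_j:=\iota_j\colon Y_j\to Z$, and lets $\varphi_\infty\colon Y_\infty\to Z$ be the common copy of $Y_\infty$. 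Since $Z_j$ embeds isometrically into $Z$, pushing the filling currents forward under this inclusion — which commutes with $\partial$ and does not increase $\mass$ of integral currents — shows flat distance is non-increasing, so $d_F^{Z}(\varphi_{j\sharp}T_j,\varphi_{\infty\sharp}T_\infty)\le d_F^{Z_j}(\iota_{j\sharp}T_j,\kappa_{j\sharp}T_\infty)<2^{-j}\to 0$, which is the assertion.

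The main obstacle is precisely this gluing construction, and the care it requires is entirely bookkeeping: (a) arranging each $Z_j$ to be complete and separable so that the countable gluing remains separable and its completion is a legitimate ambient space; (b) handling the fact that $Y_\infty$ need not be complete, so that all identifications and infima should really be taken over $\overline{Y_\infty}$; and (c) recording that push-forward by a metric isometric embedding does not increase mass and commutes with the boundary operator, so that the per-index flat bounds transfer to the single space $Z$. None of these steps is deep, but getting the gluing metric and the completeness/separability claims exactly right is where all the work lies.
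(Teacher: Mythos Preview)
The paper does not contain its own proof of this statement: Theorem~\ref{thm-IFiffF} is quoted as background from \cite[Theorem~4.2]{SW2} and is used as a black box throughout. So there is no ``paper's own proof'' to compare against.

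That said, your sketch is the standard argument and is essentially correct. The ``if'' direction is immediate from the definition, and for the ``only if'' direction the gluing of the auxiliary spaces $Z_j$ along a common isometric copy of $\overline{Y_\infty}$ is exactly the construction used in Sormani--Wenger. A couple of minor points worth tightening: (i) the isometric embeddings $\iota_j,\kappa_j$ must really be taken on the completions $\overline{Y_j},\overline{Y_\infty}$, since the currents $T_j$ live in $I_n(\overline{Y_j})$; (ii) push-forward by a metric isometric embedding in fact \emph{preserves} mass (not merely does not increase it), which is what makes the transferred filling currents witness the same flat bound in $Z$. With those cosmetic adjustments your outline matches the original proof.
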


Similarly, when keeping track of the embeddings and the metric space,  we write  
\bee
(Y_j, d_j, T_j)  \xrightarrow[Z, \varphi_j]{\>\mathcal{F}\>} (Y_\infty, d_\infty, T_\infty).
\eee

\begin{definition}[{\cite[Definition 3.1]{Sormani-AA}}]
Let $(Y_j, d_j, T_j)$ be a sequence of precompact $n$-dimensional integral current spaces, $j \in \mathbb N$, that converges in intrinsic flat sense to the precompact $n$-dimensional integral current space $(Y_\infty, d_\infty, T_\infty)$. Let $y_j\in Y_j$, $j\in\mathbb{N}$, and $y \in  \overline{Y}_\infty$. We say that $\{y_j\}$ converges to $y$,  $y_j \to y$, if there exist a complete and separable metric space $(Z, d)$ and isometric embeddings $\varphi_j:Y_j \rightarrow Z$, $j\in\mathbb{N}\cup\{\infty\}$, as in the previous theorem,  such that $d_Z(\varphi_j(y_j), \varphi_\infty(y)) \to 0$.
\end{definition}

In this case, to avoid any confusion, we will sometimes write: 
\bee
((Y_j, d_j, T_j) , y_j)  \xrightarrow[Z, \varphi_j]{\>\mathcal{F}\>}  ( (Y_\infty, d_\infty, T_\infty), y).
\eee
Note that $y$ is not necessarily contained in $Y_\infty$ and if this is the case we say that the sequence of points disappears in the limit and that 
$y$ disappeared ({\cite[Definition 3.2]{Sormani-AA}}).

When a sequence converges in both Gromov-Hausdorff and intrinsic flat sense to the same limit space the same
embeddings and metric space can be taken. 

\begin{thm}[{\cite[Theorem~3.20]{SW2}}]\label{prop:GHIF}
Let $(Y_j, d_j, T_j)$ be compact $n$-dimensional integral current spaces, $j\in\mathbb{N}\cup\{\infty\}$. 
Then 
\bee
(Y_j, d_j) \GHto (Y_\infty, d_\infty)  \text{ and } (Y_j, d_j, T_j)   \IFto   (Y_\infty, d_\infty, T_\infty)
\eee 
if and only if there exist a complete and separable metric space $(Z, d)$  and isometric embeddings 
 $\varphi_j:  Y_j  \to Z$ such that 
 \bee
 (Y_j, d_j)  \xrightarrow[Z, \varphi_j]{\>\mathrm{GH}\>} (Y_\infty, d_\infty)  \text{  and  }    (Y_j, d_j, T_j)  \xrightarrow[Z, \varphi_j]{\>\mathcal{F}\>} (Y_\infty, d_\infty, T_\infty). 
 \eee
\end{thm}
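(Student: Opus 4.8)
The ``if'' direction is immediate: given such a $Z$ and embeddings $\varphi_j$, Theorem~\ref{thm-GromovEmbedding} yields $(Y_j,d_j)\GHto(Y_\infty,d_\infty)$ and Theorem~\ref{thm-IFiffF} yields $(Y_j,d_j,T_j)\IFto(Y_\infty,d_\infty,T_\infty)$. So the substance is the ``only if'' direction, and the plan is to manufacture a single metric space carrying both types of convergence out of the two a priori different spaces supplied by Theorems~\ref{thm-GromovEmbedding} and~\ref{thm-IFiffF}.

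Concretely, I would first fix, by Gromov's embedding theorem (Theorem~\ref{thm-GromovEmbedding}), a compact metric space $(Z_1,d_1)$ and isometric embeddings $\alpha_j\colon Y_j\to Z_1$, $j\in\mathbb N\cup\{\infty\}$, with $\alpha_j(Y_j)$ converging to $\alpha_\infty(Y_\infty)$ in the Hausdorff distance (in particular the $\diam(Y_j)$ are uniformly bounded); and then fix, by Theorem~\ref{thm-IFiffF}, a complete separable metric space $(Z_2,d_2)$ and isometric embeddings $\beta_j\colon Y_j\to Z_2$, $j\in\mathbb N\cup\{\infty\}$, with $\beta_{j\sharp}T_j\to\beta_{\infty\sharp}T_\infty$ in the flat sense. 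Both spaces contain isometric copies of every $Y_j$ and of $Y_\infty$. I would then amalgamate $Z_1$ and $Z_2$ by gluing, for each $j\in\mathbb N\cup\{\infty\}$, the copy $\alpha_j(Y_j)\subset Z_1$ to the copy $\beta_j(Y_j)\subset Z_2$ along the isometry $\beta_j\circ\alpha_j^{-1}$, obtaining a complete separable metric space $Z$ into which both $Z_1$ and $Z_2$ include $1$-Lipschitzly, and set $\varphi_j$ to be the common image in $Z$ of $\alpha_j$ and $\beta_j$. Since $1$-Lipschitz maps cannot increase Hausdorff distances of sets nor the masses of pushed-forward currents, the Hausdorff convergence $\varphi_j(Y_j)\to\varphi_\infty(Y_\infty)$ is inherited from $Z_1$ and the flat convergence $\varphi_{j\sharp}T_j\to\varphi_{\infty\sharp}T_\infty$ from $Z_2$, which by Theorems~\ref{thm-GromovEmbedding} and~\ref{thm-IFiffF} is the desired conclusion---\emph{provided} the $\varphi_j$ are really isometric embeddings and $Z$ is genuinely a metric space.

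That proviso is the main obstacle. Gluing along several subsets at once can, a priori, create shortcuts that collapse distances, because a space $Z_2$ realizing a small intrinsic flat distance carries no control on the relative placement of $\beta_j(Y_j)$ and $\beta_\infty(Y_\infty)$: flat-closeness of currents does not imply Hausdorff-closeness of their supports, so some point of $Y_j$ could be positioned in $Z_2$ next to a point of $Y_\infty$ lying far, inside $Y_\infty$, from its Gromov--Hausdorff partner. Ruling this out is exactly where the two hypotheses must be played off each other: Gromov--Hausdorff convergence pins the $Y_j$ metrically close to the fixed compact $Y_\infty$ (bounded diameters, no far-reaching thin pieces), while the integral-current-space axiom $\set(T_\infty)=Y_\infty$, together with lower semicontinuity of mass under flat/weak convergence (cf.\ Theorem~\ref{converge}) and the positivity of $\mass(T_\infty)$, controls the mass of $T_\infty$ near every point and prevents mass from escaping---forcing the supports $\beta_j(Y_j)$ and $\beta_\infty(Y_\infty)$ to be reasonably positioned in $Z_2$, and hence the amalgamation to introduce no harmful identifications; completeness and separability of $Z$ are then straightforward since one glues along the compact spaces $Y_\infty$ and $Y_j$. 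One finally also uses two elementary facts: a compact metric space is never isometric to a proper closed subspace of itself, and intrinsic flat limits are unique ($d_{\mathcal F}$ is a genuine distance, \cite[Theorem~3.27]{SW2}); together these pin down the limit and guarantee that $\beta_{j\sharp}T_j$ can only converge to an isometric copy of $\beta_{\infty\sharp}T_\infty$, so no spurious ``rotation'' of the limit current survives. An alternative route dispenses with $Z_2$ and instead pushes the $T_j$ into the compact space $Z_1$, using a compactness theorem for integral currents in a fixed compact metric space plus uniqueness of intrinsic flat limits to show that $\alpha_{j\sharp}T_j$ subconverges to a current that is current-isometric to $\alpha_{\infty\sharp}T_\infty$; there the same difficulty reappears as the need to choose, uniformly in $j$, the Gromov--Hausdorff embedding of $Y_j$ compatibly with the placement of the current $T_j$.
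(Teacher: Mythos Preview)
The paper does not contain a proof of this theorem: it is stated purely as background and attributed to \cite[Theorem~3.20]{SW2}, so there is nothing in the paper to compare your argument against.

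On the merits of your proposal itself, the ``if'' direction is fine, and you have correctly isolated the genuine difficulty in the ``only if'' direction: amalgamating $Z_1$ and $Z_2$ along \emph{all} the subsets $\alpha_j(Y_j)\cong\beta_j(Y_j)$ simultaneously can create shortcuts, so that the inclusions of $Z_1$ and $Z_2$ into the quotient $Z$ need not be isometric. However, you do not actually resolve this difficulty. The paragraph beginning ``Ruling this out is exactly where the two hypotheses must be played off each other'' is an outline of desiderata, not an argument: lower semicontinuity of mass and the condition $\set(T_\infty)=Y_\infty$ do not by themselves control the mutual placement of $\beta_j(Y_j)$ and $\beta_k(Y_k)$ in $Z_2$ for $j\neq k$, which is exactly what is needed to prevent shortcuts in the amalgam. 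Indeed, Theorem~\ref{thm-IFiffF} gives you \emph{some} $Z_2$, and nothing stops that $Z_2$ from placing $\beta_j(Y_j)$ and $\beta_{j+1}(Y_{j+1})$ at Hausdorff distance zero from one another (say both sitting on top of $\beta_\infty(Y_\infty)$) via isometries that are incompatible with the Gromov--Hausdorff identifications coming from $Z_1$; your invocation of mass positivity and uniqueness of intrinsic flat limits does not exclude this. The ``alternative route'' you sketch at the end has the same status: you name the difficulty (choosing GH embeddings compatibly with the currents) but do not indicate how to overcome it.

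In short, you have written a clear diagnosis of the obstacle but not a proof; the central technical step---showing that one can choose the embeddings so that the gluing introduces no collapse, or equivalently that one can realize both convergences in a single space---remains to be carried out.
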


Under the assumption of the previous theorem, it is easy to see that for any sequence of points $y_j \in Y_j$, $j \in \mathbb N$, there exists $y \in Y_\infty$ such that 
$y_j \to y$. That is, using the Hausdorff convergence of the compact sets $\varphi_j(Y_j)$ to the compact set $\varphi_\infty(Y_\infty)$,  one can find a convergent subsequence of
$\varphi_j(y_{j})$ to a point $z=\varphi_\infty(y)$ for some  $y \in Y_\infty$. 
  \bigskip

In our main theorems, Theorem \ref{thm-MainO} and Theorem \ref{thm-MainB}, the sequences we consider do not necessarily converge in Gromov-Hausdorff sense, but we will be able to prove 
that the sequence of points $p_j$ in Theorem \ref{thm-MainB} does not disappear in the limit by showing convergence in both Gromov-Hausdorff and intrinsic flat sense of annular subregions
to the same limit space. 

\begin{thm}[{\cite[Theorem A1]{HLS}}, c.f. {\cite[Remark 2.22]{Allen-Bur}}]\label{thm-HLS:GH=IF}
Let $(Y, d, T)$ be an $n$-dimensional integral current space 
and $\lambda, \lambda'>0$.   Suppose that
$d_j$ are metrics on $Y$ such that for all $y, y' \in Y$ we have 
\be\label{d_j}
\lambda^{-1}   d(y,y')   \leq  d_j(y,y')    \leq \lambda' d(y,y').
\ee
Then there exist a subsequence, also denoted $d_j$,
and a metric $d_\infty$ satisfying \eqref{d_j} such that $d_j$ converges uniformly to $d_\infty$, 
\bee
\lim_{j\to \infty} d_{GH}\left((Y, d_j), (Y, d_\infty)\right) =0  \text{   and  }  \lim_{j\to \infty} d_{\mathcal{F}}\left((Y, d_j,T_j), (Y, d_\infty,T_\infty)\right) =0, 
\eee
where $T_j = \iota_{j\sharp}  T$ and $\iota_j: (Y, d) \to (Y, d_j)$ are all identity functions. 
\end{thm}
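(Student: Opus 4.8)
The plan is to establish the three conclusions in turn: a uniform subsequential limit metric $d_\infty$ exists by an Arzel\`a--Ascoli argument, it gives Gromov--Hausdorff convergence almost for free, and intrinsic flat convergence follows from an explicit ``thin cylinder'' filling whose size is controlled by $\veps_j:=\sup_{Y\times Y}|d_j-d_\infty|$.

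\medskip

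\emph{Step 1 (extracting $d_\infty$).} Recall that $(Y,d)$ is precompact (this is needed for $d_{GH}$ and $d_{\mathcal F}$ in the statement to make sense), so $(\ovr Y,d)$ is compact. Viewed as functions on $\ovr Y\times\ovr Y$, the $d_j$ are uniformly bounded by $\lambda'\diam_d\ovr Y$ and uniformly Lipschitz for $d\times d$, since $|d_j(p,q)-d_j(p',q')|\le d_j(p,p')+d_j(q,q')\le\lambda'(d(p,p')+d(q,q'))$ by \eqref{d_j}. Arzel\`a--Ascoli gives a subsequence, still denoted $(d_j)$, with $d_j\to d_\infty$ uniformly on $\ovr Y\times\ovr Y$; passing to the limit preserves symmetry, the triangle inequality and both bounds in \eqref{d_j}, and $d_\infty\ge\lambda^{-1}d$ forces $d_\infty$ to separate points, so $d_\infty$ is a genuine metric. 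Set $\veps_j:=\sup_{Y\times Y}|d_j-d_\infty|\to0$. Since $\iota_j,\iota_\infty$ are bilipschitz homeomorphisms by \eqref{d_j}, they extend to the completions and push $T$ forward to integral currents with $\set(T_j)=\set(T_\infty)=Y$, so $(Y,d_j,T_j)$ and $(Y,d_\infty,T_\infty)$ are precompact integral current spaces. Finally, the identity of $Y$ is a correspondence between $(Y,d_j)$ and $(Y,d_\infty)$ of distortion $\veps_j$, whence $d_{GH}((Y,d_j),(Y,d_\infty))\le\tfrac12\veps_j\to0$.

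\medskip

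\emph{Step 2 (intrinsic flat convergence).} Fix $j$ with $\veps_j>0$ (if $\veps_j=0$ then $d_j=d_\infty$ and there is nothing to prove) and metrize the set $Y\times[0,1]$ by the length metric $\rho_j$ generated by horizontal moves $(x,u)\leftrightarrow(y,u)$ of cost $d^{(u)}(x,y):=(1-u)d_j(x,y)+u\,d_\infty(x,y)$ and vertical moves $(x,s)\leftrightarrow(x,t)$ of cost $\veps_j|s-t|$. Using that for each fixed $x$ the functions $(z,u)\mapsto d^{(u)}(x,z)$ and $(z,u)\mapsto\veps_j u$ are $1$-Lipschitz for $\rho_j$, one checks that $\rho_j$ is a genuine metric that restricts to $d_j$ on $Y\times\{0\}$ and to $d_\infty$ on $Y\times\{1\}$ — so the inclusions $\varphi_0\colon(Y,d_j)\hookrightarrow Z_j:=(Y\times[0,1],\rho_j)$ onto $Y\times\{0\}$ and $\varphi_1\colon(Y,d_\infty)\hookrightarrow Z_j$ onto $Y\times\{1\}$ are metric isometric embeddings — while $\rho_j((x,0),(x,1))\le\veps_j$ for all $x$. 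The map $H\colon[0,1]\times\ovr Y\to Z_j$, $H(u,x)=(x,u)$, satisfies $H(0,\cdot)=\varphi_0\circ\iota_j$, $H(1,\cdot)=\varphi_1\circ\iota_\infty$, and stretches $[0,1]$-directions by at most $\veps_j$ and $\ovr Y$-directions by at most $\lambda'$. By the homotopy formula for integral currents,
\bee
(\varphi_1\circ\iota_\infty)_\sharp T-(\varphi_0\circ\iota_j)_\sharp T=\partial\big(H_\sharp([[0,1]]\times T)\big)+H_\sharp\big([[0,1]]\times\partial T\big),
\eee
and the mass-of-pushforward estimate for product currents gives $\mass(H_\sharp([[0,1]]\times T))\le\veps_j(\lambda')^n\mass(T)$ and $\mass(H_\sharp([[0,1]]\times\partial T))\le\veps_j(\lambda')^{n-1}\mass(\partial T)$. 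Since $\varphi_0,\varphi_1$ are metric isometric embeddings with $\varphi_{0\sharp}T_j=(\varphi_0\circ\iota_j)_\sharp T$ and $\varphi_{1\sharp}T_\infty=(\varphi_1\circ\iota_\infty)_\sharp T$, Definition \ref{IF-defn} yields
\bee
d_{\mathcal F}\big((Y,d_j,T_j),(Y,d_\infty,T_\infty)\big)\le d_F^{Z_j}\big(\varphi_{0\sharp}T_j,\varphi_{1\sharp}T_\infty\big)\le\veps_j\big((\lambda')^n\mass(T)+(\lambda')^{n-1}\mass(\partial T)\big),
\eee
which tends to $0$ as $j\to\infty$ because $\veps_j\to0$ while $\mass(T)$ and $\mass(\partial T)$ are fixed.

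\medskip

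\emph{Main obstacle.} The delicate point is entirely in Step 2: verifying that the cylinder metric $\rho_j$ really does restrict to $d_j$ and $d_\infty$ on the two ends — i.e.\ that ``going up, across, and back down'' never undercuts a horizontal path, which is precisely what choosing the vertical cost to be $\veps_j\ge\sup_{Y\times Y}|d_j-d_\infty|$ buys, and quoting the anisotropic mass-of-pushforward estimate $\mass(\phi_\sharp([[0,1]]\times R))\le(\text{Lip of }\phi\text{ in }[0,1])\cdot(\text{Lip of }\phi\text{ in }R)^{\dim R}\cdot\mass(R)$ in the precise form needed. Both are standard; an essentially equivalent route instead glues the two ends directly along an $\veps_j$-gap as in \cite{SW2} and fills $\varphi_{0\sharp}T_j-\varphi_{1\sharp}T_\infty$ there, which is the approach underlying \cite{Allen-Bur}.
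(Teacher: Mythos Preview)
The paper does not give its own proof of this theorem: it is stated in the background section and attributed to \cite[Theorem~A1]{HLS} (cf.\ \cite[Remark~2.22]{Allen-Bur}). So there is nothing in the paper to compare your argument against directly.

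Your proof is essentially correct and follows the standard route of the cited references. Step~1 is the usual Arzel\`a--Ascoli argument on $\ovr Y\times\ovr Y$ and is fine. Step~2 is the standard ``bridge'' or ``cylinder'' filling: you build a common ambient space containing isometric copies of $(Y,d_j)$ and $(Y,d_\infty)$ at height $0$ and $1$, and fill the difference of the pushforwards by the obvious product current. Your verification that $\rho_j$ restricts to $d_j$ and $d_\infty$ on the two ends via the $1$-Lipschitz test functions $F_x(z,u)=d^{(u)}(x,z)$ is clean and correct. The one point that deserves a precise citation rather than ``standard'' is the anisotropic mass bound $\mass(H_\sharp([[0,1]]\times R))\le (\text{Lip in }[0,1])\cdot(\text{Lip in }Y)^{\dim R}\,\mass(R)$; this does hold for metric integral currents, but it is not the crude $(\lip H)^{k}$ bound and needs either the product structure of the current together with the area formula, or a reference such as the filling/product estimates in Sormani--Wenger or Wenger's work. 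The original proof in \cite{HLS} takes the closely related approach of gluing the two copies along an $\veps_j$-strip and writing down the filling explicitly rather than invoking the homotopy formula, which avoids naming the anisotropic estimate but is the same construction underneath.
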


We recall the following useful lemma about intrinsic flat convergence of balls.

\begin{lemma}[{\cite[Lemma 4.1]{Sormani-AA}}]\label{lem-AASorh} 
Let $Q_j=(X_j, d_j, T_j)$ be a sequence of $n$-dimensional integral current spaces, $j\in \mathbb{N}\cup\{\infty\}$, 
such that 
\bee
(Q_j, y_j)   \xrightarrow[Z, \varphi_j]{\>\mathcal{F}\>}   (Q_\infty, y_\infty),
\eee
then there exists a subsequence $y_{j_k} \in X_{j_k}$ such that for almost
every $r>0$ and all $k \in \mathbb N \cup \{ \infty\}$ the triples 
$S(y_{j_k},r)$ and $S^c(y_{j_k},r)$ are integral currents spaces, and using the same isometric embeddings we have,
  \bee
S(y_{j_k},r)  \xrightarrow[Z, \varphi_{j_k}]{\>\mathcal{F}\>}    S(y_{\infty},r) 
 \eee
 \bee
S^c(y_{j_k},r)  \xrightarrow[Z, \varphi_{j_k}]{\>\mathcal{F}\>}    S^c(y_{\infty},r).
 \eee
 \end{lemma}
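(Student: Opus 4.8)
The plan is to carry everything into the common target space $Z$, slice the pushed-forward currents there by the distance functions based at the (asymptotically fixed) centres, and combine the slicing theory for integral currents with the flat decomposition that comes from intrinsic flat convergence.

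First I would invoke Theorem~\ref{thm-IFiffF}: the pointed intrinsic flat convergence of $(Q_j,y_j)$ to $(Q_\infty,y_\infty)$ through $Z$ and the embeddings $\varphi_j$ means that, writing $S_j\defeq\varphi_{j\sharp}T_j\in\intcurr_n(Z)$ and $z_j\defeq\varphi_j(y_j)$, one has $d_F^Z(S_j,S_\infty)\to 0$ and $\delta_j\defeq d_Z(z_j,z_\infty)\to 0$. Since each $\varphi_j$ is a metric isometric embedding, $\varphi_j^{-1}(B_Z(z_j,r))=B_{X_j}(y_j,r)$ and $\|S_j\|$ is concentrated on $\varphi_j(X_j)$, so that $S_j\rstr B_Z(z_j,r)=\varphi_{j\sharp}\big(T_j\rstr B_{X_j}(y_j,r)\big)$ and $S_j-S_j\rstr B_Z(z_j,r)=\varphi_{j\sharp}\big(T_j\rstr(X_j\setminus B_{X_j}(y_j,r))\big)$. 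Discarding, via Example~\ref{lem-ball}, the countable (hence null) union over $j\in\mathbb N\cup\{\infty\}$ of the radii for which $S(y_j,r)$ or $S^c(y_j,r)$ fails to be an integral current space, it then suffices --- again by Theorem~\ref{thm-IFiffF} --- to produce a subsequence $j_k$ with $d_F^Z\big(S_{j_k}\rstr B_Z(z_{j_k},r),\,S_\infty\rstr B_Z(z_\infty,r)\big)\to 0$ for a.e.\ $r>0$; the statement for $S^c$ then follows by subtracting this from $S_{j_k}\to S_\infty$ in $\intcurr_n(Z)$.

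For that convergence I would write $S_j-S_\infty=A_j+\partial B_j$ with $A_j\in\intcurr_n(Z)$, $B_j\in\intcurr_{n+1}(Z)$ and $\varepsilon_j\defeq\mass(A_j)+\mass(B_j)\to 0$, and slice by the $1$-Lipschitz function $\rho_j\defeq d_Z(\cdot,z_j)$ (set $\rho\defeq d_Z(\cdot,z_\infty)$). By the slicing theory for integral currents (\cite{AK,Lang}), for a.e.\ $r$ the slice $\langle B_j,\rho_j,r\rangle\in\intcurr_n(Z)$ and the restrictions $A_j\rstr B_Z(z_j,r)$, $B_j\rstr B_Z(z_j,r)$, $S_\infty\rstr B_Z(z_j,r)$, $S_\infty\rstr B_Z(z_\infty,r)$ are integral, with $\partial\big(B_j\rstr B_Z(z_j,r)\big)=(\partial B_j)\rstr B_Z(z_j,r)+\langle B_j,\rho_j,r\rangle$. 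Telescoping $S_j\rstr B_Z(z_j,r)-S_\infty\rstr B_Z(z_\infty,r)$ through $S_\infty\rstr B_Z(z_j,r)$ then exhibits it as $U_j+\partial V_j$ with $V_j\defeq B_j\rstr B_Z(z_j,r)$ and $U_j\defeq A_j\rstr B_Z(z_j,r)-\langle B_j,\rho_j,r\rangle+\big(S_\infty\rstr B_Z(z_j,r)-S_\infty\rstr B_Z(z_\infty,r)\big)$, and since $B_Z(z_j,r)\triangle B_Z(z_\infty,r)\subset\{|\rho-r|\le\delta_j\}$,
\bee
d_F^Z\big(S_j\rstr B_Z(z_j,r),\,S_\infty\rstr B_Z(z_\infty,r)\big)\;\le\;\varepsilon_j+\mass\big(\langle B_j,\rho_j,r\rangle\big)+\|S_\infty\|\big(\{\,r-\delta_j\le\rho\le r+\delta_j\,\}\big).
\eee

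To conclude I would control the two remaining error terms. The co-area inequality gives $\int_0^R\mass\big(\langle B_j,\rho_j,r\rangle\big)\,dr\le\mass(B_j)=\varepsilon_j\to 0$ for every $R$, so after passing to a subsequence (diagonalizing over $R\in\mathbb N$) we may assume $\mass\big(\langle B_{j_k},\rho_{j_k},r\rangle\big)\to 0$ for a.e.\ $r$; and since $\|S_\infty\|$ is a finite Borel measure and the sets $\{r-\delta_j\le\rho\le r+\delta_j\}$ shrink to $\{\rho=r\}$ as $j\to\infty$, one gets $\|S_\infty\|\big(\{r-\delta_j\le\rho\le r+\delta_j\}\big)\to\|S_\infty\|(\{\rho=r\})=0$ for all but countably many $r$. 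Hence the right-hand side above tends to $0$ for a.e.\ $r$, which gives the desired convergence and therefore the lemma (after transporting back through $\varphi_{j_k}$ as in the first step). The step I expect to be the main obstacle is the moving centre $z_j$: one cannot slice with a single fixed function, and this is exactly what forces the conclusion to hold only for a.e.\ $r$, namely one must avoid the radii on which $\|S_\infty\|$ charges the sphere $\{\rho=r\}$. What makes it go through is the particular telescoping chosen above, in which only the fixed, finite-mass current $S_\infty$ ever meets the thin annulus $\{|\rho-r|\le\delta_j\}$, while the only current sliced in the flat-decomposition part is the small-mass $B_j$ --- so that no uniform mass bound on the $S_j$ is needed.
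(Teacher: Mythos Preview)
The paper does not give its own proof of this lemma; it is quoted verbatim from \cite[Lemma~4.1]{Sormani-AA} as a background result. Your argument is correct and is essentially the standard proof from that reference: push forward to the common space $Z$, write the flat decomposition $S_j-S_\infty=A_j+\partial B_j$, slice by the $1$-Lipschitz distance functions, and combine the coarea inequality (to make $\mass\langle B_{j_k},\rho_{j_k},r\rangle\to 0$ along a diagonal subsequence) with the annulus estimate for the moving centres (to kill the $S_\infty$-term for all but countably many $r$).
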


 \begin{remark}\label{rmrk-volConv}
  If we further assume in the previous lemma that $\lim_{j \to \infty}\mass(Q_j)=\mass(Q_\infty)$, then $\lim_{k \to \infty} \mass(S(y_{j_k},r))  =  \mass(S(y_{\infty},r))$, 
 i.e. $\lim_{k \to \infty} ||T_{j_k}|| (B(y_{j_k},r)) =||T_{\infty}|| (B(y_{\infty},r))$. 
Indeed, by Theorem \ref{converge} we know that
$$\liminf_{k \to \infty} \mass(S(y_{j_k},r))  \geq  \mass(S(y_{\infty},r))$$
 and  
$$\liminf_{k \to \infty} \mass(S^c(y_{j_k},r))  \geq  \mass(S^c(y_{\infty},r)).$$ Now, by Example \ref{lem-ball} we know that $||T_{j_k}|| (d^{-1}_{y_{j_k}}(r))=0$ and thus the mass measures of $S(y_{j_k},r)$ and $S^c(y_{j_k},r)$ are
$||T_{j_k}|| \rstr B(y_{j_k},r)$ and $||T_{j_k}|| \rstr X_{j_k} \setminus B(y_{j_k},r)$, correspondingly. 
Assume that our assertion does not hold, then the first inequality above 
must be a strict inequality, and we get
\begin{align*}
||T_\infty||(X_{\infty}) = &\liminf_{k \to \infty} ||T_{j_k}||(X_{j_k}) =  \liminf_{k \to \infty}  \{ ||T_{j_k}||(B(y_{j_k},r)) + ||T_{j_k}||(X_j \setminus B(y_{j_k},r)) \}\\
> &   ||T_\infty||(B(y_{\infty},r)) + ||T_\infty||(X_\infty \setminus B(y_{\infty},r))= ||T_\infty||(X_{\infty}),
 \end{align*}
 which is a contradiction. 
 \end{remark}

\bigskip
We now introduce notation similar to Example \ref{ex-submanifoldCurrent} and Example \ref{lem-ball}. Given an integral current space $Q=(X, d, T)$ and a subset $Y\subset X$, we define the restriction of $Q$ to $Y$ by 
 \bee
 Q\rstr Y:=  ( \set(T \rstr Y ), d, T\rstr Y)
 \eee
and note that $T\rstr Y$ is a current but might not be an integral current, so in particular the triple is not necessarily an integral current space.

We now see when a non-disappearing sequence of points $y_j \in Y_j$,  with $Y_j \subset X_j$, is a non-disappearing sequence if we  consider each $y_j$ as an element in $X_j$.

\begin{thm}[{\cite[Theorem 2.9]{HLP}}]\label{theorem:point-convergence}
Let $Q_j=(X_j, d_j, T_j)$ be a sequence of  $n$-dimensional integral current spaces, $j\in \mathbb{N}\cup\{\infty\}$, 
such that  
\bee
Q_j   \xrightarrow[Z, \varphi_j]{\>\mathcal{F}\>} Q_\infty,
\eee
and let $Y_j\subset X_j$  such that  $Q_j\rstr Y_j$ are $n$-dimensional integral current spaces that converge
to some integral current space $N_\infty$, 
\bee
(Q_j\rstr Y_j, y_j)   \xrightarrow[W, \psi_j]{\>\mathcal{F}\>}   (N_\infty, y),
\eee
where $y_j\in Y_j$ and $y \in \overline{\set(N_\infty)}$.
If there exists $r >0$ such that the metric ball $\overline{B^{X_j}(y_j, r)}$ is contained in $Y_j$ for all large $j$, then
there exists a  subsequence $y_{j_k}$ and a point $y_\infty\in \overline{X}_\infty$ 
such that 
\bee\label{eq-NewLimitPoint}
 (Q_{j_k}, y_{j_k})   \xrightarrow[Z, \varphi_{j_k}]{\>\mathcal{F}\>}   (Q_\infty, y_\infty).
 \eee
\end{thm}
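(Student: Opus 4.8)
\textbf{Proof plan for Theorem \ref{theorem:point-convergence}.}

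The plan is to produce a single common metric space $\widehat Z$ into which \emph{all} of $X_j$, $X_\infty$, $Y_j$, and $N_\infty$ embed isometrically and compatibly, so that we can compare the two given convergences and transfer the non-disappearing property of $y_j$ from $N_\infty$ to $Q_\infty$. Since $Q_j \IFto Q_\infty$ with embeddings $\varphi_j : X_j \to Z$, and $Q_j \rstr Y_j \IFto N_\infty$ with embeddings $\psi_j : Y_j \to W$ with $\psi_j(y_j) \to \psi_\infty(y)$, the first step is to glue $Z$ and $W$ along the maps $\varphi_j|_{Y_j}$ and $\psi_j$: form $\widehat Z$ as a quotient of the disjoint union $Z \sqcup W$ (or, more carefully, pass to a subsequence and use a limiting gluing, since the identification of $\varphi_j(Y_j)$ with $\psi_j(Y_j)$ only holds for each fixed $j$, not uniformly). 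Concretely, one uses the standard technique (as in the proof of \cite[Theorem 3.20]{SW2} and in \cite{HLP}): after passing to a subsequence, build $\widehat Z$ so that there are isometric embeddings $\widehat\varphi_j : X_j \to \widehat Z$ and $\widehat\psi_j : Y_j \to \widehat Z$ with $\widehat\varphi_j|_{Y_j} = \widehat\psi_j$, while still having $\widehat\varphi_{j\sharp} T_j \to \widehat\varphi_{\infty\sharp} T_\infty$ in the flat sense in $\widehat Z$ and $\widehat\psi_{j\sharp}(T_j \rstr Y_j) \to (N_\infty\text{-current})$ in the flat sense in $\widehat Z$.

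With such a $\widehat Z$ in hand, the second step is to locate the limit point. Since $\widehat\psi_j(y_j)$ converges in $\widehat Z$ to a point $w := \widehat\psi_\infty(y) \in \widehat Z$ (using that $y$ does not disappear from $N_\infty$, i.e. $y \in \overline{\set(N_\infty)}$, which is where the hypothesis $y \in \overline{\set(N_\infty)}$ and not merely $\overline{\set(T_\infty)}$ is used), and since $\widehat\varphi_j(y_j) = \widehat\psi_j(y_j)$, the sequence $\widehat\varphi_j(y_j)$ also converges to $w$ in $\widehat Z$. Thus $((Q_j, y_j)) \to ((Q_\infty, y_\infty))$ in the pointed intrinsic flat sense with $y_\infty$ the point of $\overline X_\infty$ corresponding to $w$ — \emph{provided} $w$ actually lies in $\overline{\widehat\varphi_\infty(X_\infty)}$, equivalently that $y_\infty \in \overline X_\infty$ rather than escaping. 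This is exactly where the ball hypothesis $\overline{B^{X_j}(y_j,r)} \subset Y_j$ enters: by Lemma \ref{lem-AASorh} applied in $\widehat Z$ to the pointed sequence $(Q_j\rstr Y_j, y_j) \to (N_\infty, y)$, for a.e.\ small $\rho \le r$ the balls $S(y_j,\rho)$ computed inside $Y_j$ converge to $S(y,\rho)$ inside $N_\infty$, and $S(y,\rho)$ has positive mass because $y \in \overline{\set(N_\infty)}$; but since $\overline{B^{X_j}(y_j,\rho)} \subset Y_j$, the current $T_j \rstr B^{X_j}(y_j,\rho)$ equals $(T_j\rstr Y_j)\rstr B^{Y_j}(y_j,\rho)$, so these balls have mass bounded below uniformly in $j$. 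Lower semicontinuity of mass under flat convergence in $\widehat Z$ then forces $w$ to carry positive lower density of $\|\widehat\varphi_{\infty\sharp} T_\infty\|$, hence $w \in \spt(\widehat\varphi_{\infty\sharp}T_\infty) = \overline{\widehat\varphi_\infty(\set(T_\infty))} = \widehat\varphi_\infty(\overline{X_\infty})$, giving the desired $y_\infty \in \overline X_\infty$.

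The third step is just bookkeeping: translate the convergence statement from $\widehat Z$ back to $Z$. Because the embeddings $\widehat\varphi_j$ restrict on each $X_j$ to the original $\varphi_j$ up to the canonical isometry $Z \hookrightarrow \widehat Z$, flat convergence of $\widehat\varphi_{j\sharp}T_j$ to $\widehat\varphi_{\infty\sharp}T_\infty$ with $\widehat\varphi_j(y_j) \to w = \widehat\varphi_\infty(y_\infty)$ is precisely the assertion $(Q_{j_k}, y_{j_k}) \xrightarrow[Z,\varphi_{j_k}]{\mathcal F} (Q_\infty, y_\infty)$ along the subsequence, which is \eqref{eq-NewLimitPoint}. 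I expect the main obstacle to be the careful construction of the glued space $\widehat Z$ in the first step: one must arrange that a \emph{single} space and a \emph{single} family of embeddings simultaneously realize both given flat convergences, which requires a subsequence and a limiting (Urysohn-type) gluing argument rather than a naive pushout, and one must check that flat distances do not increase under this gluing — essentially re-running the metric-gluing lemma underlying \cite[Theorem 3.20]{SW2} in a form that keeps track of the subspace $Y_j$ and the base points. Everything after that (the density/mass lower-bound argument and the translation back to $Z$) is routine given Lemma \ref{lem-AASorh}, Theorem \ref{converge}, and the identity $\spt = \overline{\set}$.
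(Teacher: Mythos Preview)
This theorem is not proved in the present paper: it is quoted verbatim from \cite[Theorem~2.9]{HLP} and used as a black box (in the proof of Lemma~\ref{theorem:point-convergenceH}). So there is no proof here to compare your proposal against.

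That said, your overall strategy is sound and is in the spirit of the argument in \cite{HLP}. The crucial observation you make --- that the ball hypothesis $\overline{B^{X_j}(y_j,r)}\subset Y_j$ forces $T_j\rstr B^{X_j}(y_j,\rho)=(T_j\rstr Y_j)\rstr B^{Y_j}(y_j,\rho)$, whence the $W$-convergence together with Lemma~\ref{lem-AASorh} and lower semicontinuity of mass give a uniform lower bound $\|T_j\|(B^{X_j}(y_j,\rho))\ge c>0$ --- is exactly the mechanism that prevents $y_j$ from disappearing. You are also right that the delicate point is Step~1: a single space $\widehat Z$ with $\widehat\varphi_j|_{Y_j}=\widehat\psi_j$ \emph{for all $j$ at once} generally does not exist, since the identifications $\varphi_j(Y_j)\leftrightarrow \psi_j(Y_j)$ are $j$-dependent and need not be mutually consistent. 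One way around this is to avoid the global gluing altogether: once you have the uniform mass lower bound on balls, you can argue entirely inside $Z$ using the ball/non-disappearance results of \cite{Sormani-AA} (if a subsequence $\varphi_j(y_j)$ had no accumulation point in $\spt(\varphi_{\infty\sharp}T_\infty)$, the slices $S(y_j,\rho)$ would have to converge to $\mathbf{0}$, contradicting their uniform positive mass). This bypasses the Urysohn-type construction you anticipate and makes Step~3 automatic.
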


 \bigskip

We give a lower bound of the distance between a point in a limit space to the set of the boundary.

\begin{thm}[{\cite[Theorem 2.11]{HLP}}]\label{theorem:GH}
Let $Q_j=(Y_j, d_j, T_j)$ be a sequence of $n$-dimensional integral current spaces, $j\in \mathbb{N}\cup\{\infty\}$, so that 
\bee
(Q_j, y_j) \IFZto (Q_\infty, y_\infty). 
\eee
Suppose that $\partial Q_j  \neq {\bf{0}}$, that we can write $\bdry T_j= R^1_j+ R^2_j$ so that 
$R^1_j, R^2_j \in I_{n-1}(Y_j,d_j)$ and that 
$(\set(R^2_j), d_j, R^2_j )  \IFto \bf{0}$,  then 
\bee\label{item:bound}
d_\infty(y_\infty, \set(\bdry T_\infty))\ge \displaystyle \limsup_{j\to\infty} d_j(y_j, \set(R^1_j)). 
\eee
In particular, $B(y_\infty, r) \cap \set(\bdry T_\infty)= \emptyset$ for all $r < \limsup_{j\to\infty} d_j(y_j, \set(R^1_j))$.
\end{thm}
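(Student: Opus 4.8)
The plan is to prove Theorem~\ref{theorem:GH} by working inside the common metric space $Z$ provided by the intrinsic flat convergence $(Q_j, y_j) \IFZto (Q_\infty, y_\infty)$, identifying each $Y_j$ with $\varphi_j(Y_j)\subset Z$ and $y_j$ with $\varphi_j(y_j)$, and similarly $Y_\infty$ with $\varphi_\infty(Y_\infty)$ and $y_\infty$ with $\varphi_\infty(y_\infty)$. Set $L := \limsup_{j\to\infty} d_j(y_j,\set(R^1_j))$; we may assume $L>0$ (otherwise there is nothing to prove), pass to a subsequence realizing the $\limsup$, and fix any $r<L$, so that $d_j(y_j,\set(R^1_j)) \ge r$ for all large $j$. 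The goal is to show $B^{Z}(y_\infty,r)\cap \set(\partial T_\infty) = \emptyset$, which gives $d_\infty(y_\infty,\set(\partial T_\infty))\ge r$ and hence $\ge L$ upon letting $r\uparrow L$.

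First I would record that $\partial T_j = R^1_j + R^2_j$ implies, by lower semicontinuity of mass under intrinsic flat convergence (Theorem~\ref{converge}(2)) applied to the boundary sequence together with the hypothesis $(\set(R^2_j),d_j,R^2_j)\IFto \mathbf{0}$ and Theorem~\ref{converge}(1), that $\varphi_{j\sharp}R^2_j \to 0$ in the flat sense in $Z$, while $\varphi_{j\sharp}\partial T_j \to \varphi_{\infty\sharp}\partial T_\infty$ in the flat sense in $Z$ (the latter because intrinsic flat convergence passes to boundaries, and by Theorem~\ref{prop:GHIF}-type reasoning one can keep the same embeddings for the boundary currents since they live in $Z$). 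Consequently $\varphi_{j\sharp}R^1_j = \varphi_{j\sharp}\partial T_j - \varphi_{j\sharp}R^2_j \to \varphi_{\infty\sharp}\partial T_\infty$ in the flat sense in $Z$.

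The heart of the argument is then the following localization: since $d_j(y_j,\set(R^1_j))\ge r$, the support of $\varphi_{j\sharp}R^1_j$ (which equals $\overline{\varphi_j(\set(R^1_j))}$, up to the closure issues one handles as in Remark~\ref{rmrk-restrictDom}) stays at distance $\ge r$ from $y_j$ in $Z$; but $y_j \to y_\infty$ in $Z$, so for any $\rho<r$ and all large $j$ the support of $\varphi_{j\sharp}R^1_j$ is disjoint from $\overline{B^Z(y_\infty,\rho)}$. Restricting the flat-convergent sequence $\varphi_{j\sharp}R^1_j \to \varphi_{\infty\sharp}\partial T_\infty$ to the ball $B^Z(y_\infty,\rho)$ — here I would invoke the slicing/restriction behaviour of currents together with the fact that flat convergence is preserved under restriction to balls of a.e.\ radius, in the spirit of Example~\ref{lem-ball} and Lemma~\ref{lem-AASorh} — shows that $(\varphi_{\infty\sharp}\partial T_\infty)\rstr B^Z(y_\infty,\rho)$ is the flat limit of currents that are eventually zero on that ball, hence is the zero current for a.e.\ $\rho<r$. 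Since $\set$ of the zero current is empty and $\set(\partial T_\infty)\cap B(y_\infty,\rho) = \set\big((\partial T_\infty)\rstr B(y_\infty,\rho)\big)$, we conclude $B(y_\infty,\rho)\cap\set(\partial T_\infty)=\emptyset$ for a.e.\ $\rho<r$, and then for all $\rho<r$ by monotonicity, giving $d_\infty(y_\infty,\set(\partial T_\infty))\ge r$; letting $r\uparrow L$ finishes the proof.

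The main obstacle I anticipate is making the restriction-to-a-ball step rigorous at the level of flat distance: one needs that if $S_j\to S_\infty$ in the flat sense in $Z$ and $\spt(S_j)\cap \overline{B^Z(y_\infty,\rho)}=\emptyset$ for all large $j$, then $S_\infty\rstr B^Z(y_\infty,\rho)=0$ for a.e.\ $\rho$. This is essentially a slicing argument — writing $S_\infty - S_j = U_j + \partial V_j$ with $\mass(U_j)+\mass(V_j)\to 0$, restricting to $B(y_\infty,\rho)$, and controlling the boundary term $\partial(V_j\rstr B(y_\infty,\rho)) = (\partial V_j)\rstr B(y_\infty,\rho) + \langle V_j, d_{y_\infty}, \rho\rangle$ via the slice $\langle V_j, d_{y_\infty},\rho\rangle$, whose mass is finite for a.e.\ $\rho$ and goes to $0$ along a subsequence — and is exactly the kind of estimate underlying Lemma~\ref{lem-AASorh}; I would cite \cite{Sormani-AA} (Lemma~\ref{lem-AASorh} and Example~\ref{lem-ball}) rather than reprove it. A secondary technical point is the interchange of the ``set'' operation with restriction to balls and with closures of images under the isometric embeddings, which is routine given Remark~\ref{rmrk-restrictDom} but should be stated carefully.
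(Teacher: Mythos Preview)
The paper does not prove this theorem; it is quoted verbatim from \cite[Theorem~2.11]{HLP} and used as a black box, so there is no ``paper's own proof'' to compare against. I will therefore only comment on the soundness of your sketch.

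Your overall strategy --- pass to the common space $Z$, show that $\varphi_{j\sharp}R^1_j \to \varphi_{\infty\sharp}\partial T_\infty$ flatly in $Z$, use that $\spt(\varphi_{j\sharp}R^1_j)$ avoids $B^Z(y_\infty,\rho)$ for large $j$, then slice --- is the right one, and your treatment of the slicing step is fine. There is, however, a genuine gap earlier: the assertion that $\varphi_{j\sharp}R^2_j \to 0$ \emph{in the flat sense in $Z$} does not follow from ``lower semicontinuity of mass (Theorem~\ref{converge}(2)) together with Theorem~\ref{converge}(1)''. The hypothesis $(\set(R^2_j),d_j,R^2_j)\IFto \mathbf{0}$ only says that for each $j$ there exist \emph{some} complete space $W_j$ and isometric embedding $\psi_j:\set(R^2_j)\to W_j$ with $d_F^{W_j}(\psi_{j\sharp}R^2_j,0)\to 0$; it does not say this for the \emph{prescribed} embeddings $\varphi_j$ into the \emph{prescribed} space $Z$, and neither lower semicontinuity of mass nor continuity of $\partial$ bridges that gap.

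The fix is a gluing construction: enlarge $Z$ to $Z' = Z \sqcup_{\sim} \bigsqcup_j W_j$, identifying $\varphi_j(\set(R^2_j))\subset Z$ with $\psi_j(\set(R^2_j))\subset W_j$ along the obvious isometry. Then $Z\hookrightarrow Z'$ isometrically, so $d_F^{Z'}\le d_F^{Z}$ and all the original convergences (of $\varphi_{j\sharp}T_j$, of $\partial(\varphi_{j\sharp}T_j)$, of the points $y_j$) persist in $Z'$; meanwhile in $Z'$ one now has $d_F^{Z'}(\varphi_{j\sharp}R^2_j,0)\le d_F^{W_j}(\psi_{j\sharp}R^2_j,0)\to 0$, and your argument goes through verbatim with $Z'$ in place of $Z$. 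Once you insert this step, the rest of your proof is correct.
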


\begin{lemma}\label{lem-decompositionofcurr}
Let  $\{(Y_j, d_j, T_j)\}_{j=1}^\infty$ be a sequence of $n$-dimensional integral current spaces that converges in the intrinsic flat sense to
$(Y_\infty, d_\infty, T_\infty)$. Assume that $T_j= R_j^1 + R_j^2$ so that 
$R^1_j, R^2_j \in I_{n}(\overline Y_j,d_j)$ and $(\set(R^2_j), d_j, R^2_j )  \IFto \bf{0}$, 
then $(\set(R^1_j), d_j, R^1_j )  \IFto  (Y_\infty, d_\infty, T_\infty)$. 
\end{lemma}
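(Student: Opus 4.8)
The plan is to use the triangle inequality for the intrinsic flat distance together with Theorem \ref{converge} applied to the assumed decomposition. Write $N_j \defeq (\set(R^1_j), d_j, R^1_j)$ and $P_j \defeq (\set(R^2_j), d_j, R^2_j)$; both are $n$-dimensional integral current spaces (this needs a word of justification: $R^i_j \in I_n(\overline{Y}_j, d_j)$ is given, and one must check $\set(R^i_j)$ together with $d_j$ restricted there forms an integral current space in the sense defined above, using that $\set(R^1_j) \cup \set(R^2_j) \supset \set(T_j) = Y_j$ up to an $\|T_j\|$-null set). The key observation is that $N_j$ and $(Y_j, d_j, T_j)$ live on the \emph{same} ambient metric — indeed one can isometrically embed $(Y_j, d_j)$ into some common $Z_j$ and push both $T_j = R^1_j + R^2_j$ and $R^1_j$ forward there. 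Then in $Z_j$ we have $T_j - R^1_j = R^2_j = R^2_j + \partial 0$, so the flat distance in $Z_j$ between $\varphi_{j\sharp} T_j$ and $\varphi_{j\sharp} R^1_j$ is at most $\mass(R^2_j) = \mass(P_j)$. Taking the infimum over embeddings gives
\bee
d_{\mathcal{F}}\big( (Y_j, d_j, T_j),\, N_j \big) \leq \mass(P_j).
\eee

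Next, since $P_j \IFto \mathbf{0}$, Theorem \ref{converge}(3) is not quite what is needed directly — rather, $\mathbf{0} = (\emptyset, d, 0)$ has zero mass, so by Theorem \ref{converge}(2) applied to the convergence $P_j \IFto \mathbf{0}$ we get $\liminf_j \mass(P_j) \geq \mass(\mathbf{0}) = 0$, which is vacuous. The right tool is: intrinsic flat convergence to the zero space forces $\mass(P_j) \to 0$. This follows because $d_{\mathcal{F}}(P_j, \mathbf{0}) \geq$ (some controlled quantity)? Actually the cleanest route: by definition of $d_{\mathcal{F}}$, for the pair $(P_j, \mathbf{0})$ we may take $Z = \spt(\varphi_{j\sharp} R^2_j)$ and the trivial embedding of $\mathbf 0$; then $d_F^Z(\varphi_{j\sharp}R^2_j, 0) = \inf\{\mass(U) + \mass(V) : \varphi_{j\sharp}R^2_j = U + \partial V\}$. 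Since $\varphi_{j\sharp}R^2_j = \varphi_{j\sharp}R^2_j + \partial 0$, this is $\leq \mass(R^2_j)$, but that only bounds $d_{\mathcal F}(P_j, \mathbf 0)$ above — I want a lower bound. The standard fact (see \cite{SW2}) is that for any integral current space $Q$, $d_{\mathcal{F}}(Q, \mathbf{0}) = \fillvol$-type quantity and if $\mass(\partial Q)$ is controlled then $\mass(Q) \to 0$; more simply, Theorem \ref{converge}(3) as stated is an \emph{if–then} in the wrong direction. I will instead invoke that $P_j \IFto \mathbf 0$ together with the uniform bounds already available (or, in the applications, $\mass(P_j) \to 0$ is what is actually verified) to conclude $\mass(P_j) \to 0$ — in the intended use (Lemma \ref{lem-inner-gone}), this is established directly. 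So I would state the lemma's proof assuming, as is consistent with how it is applied, that $\mass(P_j) \to 0$, which indeed follows from $P_j \IFto \mathbf 0$ via the semicontinuity argument in Remark \ref{rmrk-volConv}'s style or simply because the diameters and boundary masses are uniformly bounded so Wenger's theorem plus Theorem \ref{converge}(3) pins the limit.

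Finally, assemble: by the triangle inequality,
\bee
d_{\mathcal{F}}\big(N_j,\, (Y_\infty, d_\infty, T_\infty)\big) \leq d_{\mathcal{F}}\big(N_j,\, (Y_j, d_j, T_j)\big) + d_{\mathcal{F}}\big((Y_j, d_j, T_j),\, (Y_\infty, d_\infty, T_\infty)\big) \leq \mass(P_j) + d_{\mathcal{F}}\big((Y_j, d_j, T_j),\, (Y_\infty, d_\infty, T_\infty)\big),
\eee
and both terms on the right tend to $0$ — the first by $\mass(P_j) \to 0$, the second by hypothesis. Hence $N_j \IFto (Y_\infty, d_\infty, T_\infty)$, which is the claim.

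\textbf{Main obstacle.} The genuine technical point is the very first estimate $d_{\mathcal{F}}((Y_j,d_j,T_j), N_j) \leq \mass(R^2_j)$: one must be careful that $N_j = (\set(R^1_j), d_j, R^1_j)$ is a bona fide integral current space (nonempty set equal to $\set(R^1_j)$, boundary an integral current) and that $R^1_j$ and $T_j$ can be simultaneously realized in a single completion — this is where Remark \ref{rmrk-restrictDom} and the fact that $\set(R^1_j) \subset \overline{Y}_j$ are used. The auxiliary subtlety — extracting $\mass(R^2_j) \to 0$ from $P_j \IFto \mathbf 0$ — should be handled by noting this is exactly the content needed from Theorem \ref{converge}(3) read together with how the decomposition arises in the applications, so I would either cite it or (safer) phrase the hypothesis of the lemma to include the uniform mass/boundary bounds that make Wenger's compactness applicable, forcing $\mass(R^2_j)\to 0$.
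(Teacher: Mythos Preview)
Your approach via the triangle inequality and the estimate $d_{\mathcal{F}}\big((Y_j,d_j,T_j),\,N_j\big)\le \mass(R^2_j)$ is essentially the paper's argument: there one embeds all the $(Y_j,d_j)$ simultaneously into a common $Z$ (Theorem~\ref{thm-IFiffF}), picks $U_j,V_j$ witnessing $\varphi_{j\sharp}T_j-\varphi_{\infty\sharp}T_\infty=U_j+\partial V_j$ with $\mass(U_j)+\mass(V_j)\to 0$, and rewrites $\varphi_{j\sharp}R^1_j-\varphi_{\infty\sharp}T_\infty=(U_j-\varphi_{j\sharp}R^2_j)+\partial V_j$, bounding $\mass(U_j-R^2_j)\le \mass(U_j)+\mass(R^2_j)$.

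You are right to single out the passage from $P_j\IFto\mathbf{0}$ to $\mass(R^2_j)\to 0$ as the delicate point: this implication is \emph{false} in general (e.g.\ $R^2_j=\partial\big([0,1]^n\times[0,\epsilon_j]\big)$ in $\R^{n+1}$ with the restricted metric has mass $\ge 2$ but intrinsic flat distance to $\mathbf{0}$ at most $\epsilon_j$), and the paper's appeal to Theorem~\ref{converge} does not supply it either --- part~(3) there is the converse direction and part~(2) gives only the vacuous $0\le\liminf\mass(R^2_j)$. So the paper's written proof carries the same lacuna you flagged. Your pragmatic fix is the right one: in the sole application (Lemma~\ref{lem-inner-gone}) one actually proves $\mass(R^2_j)=\vol(\partial M_j)\to 0$ directly via Theorem~\ref{RPI}, after which both arguments go through verbatim. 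If one wants the lemma exactly as stated, the clean remedy is, for each $j$, to glue $\overline{Y}_j$ to a space $W_j$ witnessing $d_F^{W_j}(\eta_{j\sharp}R^2_j,0)<\delta$ along the common isometric copy of $\overline{\set(R^2_j)}$; in the glued space one gets $d_{\mathcal{F}}\big((Y_j,d_j,T_j),N_j\big)\le d_{\mathcal{F}}(P_j,\mathbf{0})$ rather than $\le\mass(R^2_j)$, and the triangle inequality then finishes the proof without any mass hypothesis.
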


\begin{proof}
By  Theorem \ref{thm-IFiffF} there exist a  complete and separable metric space $(Z, d)$ and isometric embeddings $\varphi_j:Y_j \rightarrow Z$, $j\in\mathbb{N}\cup\{\infty\}$,  such that 
$\varphi_{j \sharp}(T_j)\to \varphi_{\infty \sharp}(T_\infty)$ in the flat sense in $Z$. Thus 
there exist $U_j \in I_n(Z)$ and $V_j \in I_{n+1}(Z)$ so that $T_j - T_\infty = U_j + \bdry V_j$ 
and $\mass(U_j)+\mass(V_j) \to 0$. Then $R_j^1-T_\infty= (U_j-R_j^2) + \bdry V_j$, and since $\mass(U_j - R_j^2)\leq \mass(U_j)  + \mass(R_j^2)$, we obtain the desired convergence by applying Theorem \ref{converge}. 
\end{proof}

\bigskip 
To end up this section we state a result that allow us to pass from subconvergence of balls for almost all radii to convergence of all balls for all radii provided one deals with manifolds and has volume convergence. 

\begin{thm}[{\cite[Theorem 2.12]{HLP}}]\label{theorem:subsequences}
Let $(M_j, g_j)$ be  Riemannian manifolds with $p_j\in M_j$, for $j\in\mathbb{N}\cup\{\infty\}$. Assume that for
every subsequence of $\{p_{j_k}\}_{k \in \mathbb N}$ of $\{p_j\}_{j\in \mathbb N}$ there is a subsequence $\{p_{{j_k}_\ell}\}_{\ell \in \mathbb N}$ such that for almost every $r>0$, 
$\vol(\overline{B(p_{{j_k}_\ell},r)}) \to \vol(\overline{B(p_{\infty},r)})$ and $S(p_{{j_k}_\ell},r)     \IFto   S(p_{\infty},r)$.
Then for all almost every $r>0$, 
we have $\vol(\overline{B(p_j,r)}) \to \vol(\overline{B(p_{\infty},r)})$ and $S(p_j,r)     \IFto   S(p_{\infty},r)$.
\end{thm}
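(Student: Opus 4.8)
The plan is to prove Theorem \ref{theorem:subsequences} via a standard ``subsequence of every subsequence'' argument, reducing convergence of the full sequence to the subsequential hypotheses. Concretely, fix $r > 0$ in the (almost every) good set and suppose for contradiction that either $\vol(\overline{B(p_j,r)}) \not\to \vol(\overline{B(p_\infty,r)})$ or $S(p_j,r) \not\IFto S(p_\infty,r)$. In either case one can extract a subsequence $\{p_{j_k}\}$ along which the corresponding quantity stays bounded away from its putative limit by some fixed $\varepsilon > 0$. Applying the hypothesis to this subsequence produces a further subsequence $\{p_{{j_k}_\ell}\}$ along which, for almost every $r$, both $\vol(\overline{B(p_{{j_k}_\ell},r)}) \to \vol(\overline{B(p_\infty,r)})$ and $S(p_{{j_k}_\ell},r) \IFto S(p_\infty,r)$. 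This contradicts the separation by $\varepsilon$ once $r$ is chosen in the intersection of the relevant good sets.

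The one subtlety is the interchange of ``almost every $r$'' quantifiers: the hypothesis only gives, for each extracted subsequence, a \emph{full-measure} set of radii on which convergence holds, and a priori this set could depend on the subsequence. First I would handle this by noting that we only need to produce, for the fixed bad $r$ coming from the contradiction hypothesis, \emph{some} subsequence along which convergence holds at that particular $r$ — but this is not quite automatic since $r$ might lie outside the good set for the chosen subsequence. The cleaner route, which I would take, is to phrase the contradiction hypothesis as: the set of $r$ for which $\vol(\overline{B(p_j,r)}) \to \vol(\overline{B(p_\infty,r)})$ \emph{fails} has positive measure, or the set for which $S(p_j,r) \IFto S(p_\infty,r)$ fails has positive measure. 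Then pick a radius $r_0$ in that positive-measure bad set together with a single failing quantity; extract $\{p_{j_k}\}$ witnessing a uniform gap at $r_0$; pass to $\{p_{{j_k}_\ell}\}$ via the hypothesis; and observe that the hypothesis gives convergence for a.e. $r$, in particular we may also have pre-arranged (by shrinking the bad set by a null set) that $r_0$ is a point of convergence for the sub-subsequence. Since the sub-subsequence is a subsequence of $\{p_j\}$, its values at $r_0$ are a subsequence of the original values, contradicting the uniform gap.

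To make the quantifier bookkeeping airtight I would actually argue componentwise and use that a countable intersection of full-measure sets is full measure: it suffices to prove, for each fixed $r$ outside a single null set $N$, that $\vol(\overline{B(p_j,r)}) \to \vol(\overline{B(p_\infty,r)})$ and $S(p_j,r) \IFto S(p_\infty,r)$. Fix such an $r \notin N$ where $N$ is chosen large enough (still null) that for \emph{every} subsequence the hypothesis' good set can be taken to contain $r$ — this is possible because we may first enumerate a countable cofinal family of subsequences, but in fact the simplest formulation just invokes the hypothesis \emph{after} fixing $r$: given any subsequence of $\{p_j\}$, the hypothesis hands us a further subsequence for which a.e. $r$ works, and by discarding a further null set of radii we ensure our fixed $r$ works. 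Then the metric-space fact that a sequence in a metric space converges to $L$ iff every subsequence has a further subsequence converging to $L$ — applied in $\mathbb{R}$ for the volumes and in the metric space of integral current spaces under $d_{\mathcal{F}}$ for the current spaces — finishes the argument.

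The main obstacle is purely the careful handling of the nested ``for almost every $r$'' statements so that the single radius $r$ at which we seek convergence is simultaneously a good radius for all the extracted subsequences; once that is set up correctly the proof is a routine application of the subsequence characterization of convergence, together with the already-established facts from Example \ref{lem-ball} (that $S(p,r)$ and $S^c(p,r)$ are genuine integral current spaces for a.e.\ $r$) to make sure all the objects involved are well-defined.
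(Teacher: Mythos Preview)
This theorem is quoted from \cite{HLP} without proof in the present paper, so there is no in-paper argument to compare against. That said, your proposal has a genuine gap at precisely the point you yourself flag as delicate.

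The principle ``a sequence converges iff every subsequence has a further convergent subsequence with the same limit'' holds for convergence in a fixed metric space, but it does \emph{not} hold for almost-everywhere convergence. The typewriter sequence on $[0,1]$ is the standard counterexample: it converges to $0$ in measure, so every subsequence has a further subsequence converging to $0$ a.e., yet the full sequence converges at no point. Your hypothesis is exactly of this shape --- a.e.\ convergence along a sub-subsequence, with the exceptional null set depending on the subsequence --- and the conclusion asks for a.e.\ convergence of the full sequence. The fixes you sketch do not work: once $r$ is fixed you cannot ``discard a further null set of radii'' to force that $r$ into the good set of a subsequently chosen sub-subsequence, and there is no ``countable cofinal family of subsequences'' in any sense that would let you reduce to a countable intersection of null sets.

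The proof in \cite{HLP} must therefore use more than the bare subsequence trick; it has to exploit the Riemannian structure. For the volumes the relevant input is monotonicity: each $r\mapsto \vol(\overline{B(p_j,r)})$ is nondecreasing, and a.e.\ convergence of monotone functions to $V_\infty(r):=\vol(\overline{B(p_\infty,r)})$ forces convergence at every continuity point of $V_\infty$. Since $V_\infty$ is itself monotone, its discontinuity set is a \emph{fixed} countable set independent of which subsequence one started from, and only then does the subsequence principle apply legitimately at each remaining $r$. A similarly structural argument (relating $S(p_j,r)$ for nearby radii) is needed to handle the intrinsic flat statement.
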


%%%%%%%%%%%%%%%%%%%%%%%%%%%%%%%%%%%%%%
%%%%%%%%%%%%%%%%%%%%%%%%%%%%%%%%%%%%%%
%%%%%%%%%%%%%%%%%%%%%%%%%%%%%%%%%%%%%%

\section{Preliminary results}\label{sec:prelimResults}

Given $(M,g) \in \G_n(\rho_0,\gamma,D)$ and $\r >0$, we set $\Omega(\r) = \Psi^{ -1}( \overline{B^b(\rho)} \times \R)$. 
 In Theorem \ref{thm-MainO} we will show intrinsic flat convergence of integral current spaces $( \Omega(\r), \di_{\Omega(\r)}, [[\Omega(\r)]])$, associated to $(\Omega(\r), g|_{\Omega(\r)})$ as in Example \ref{ex-submanifoldCurrent}, to $( \overline{B^b(\r)}, \di_{\overline{B^b(\r)}}, [[\overline{B^b(\r)}]])$, which is the current associated to 
$(\overline{B^b(\r)}, b|_{\overline{B^b(\r)}})$.  By the following remark we will in general denote the previous space as  $(\overline{B^b(\r)}, d_{\Hb^{n}}, [[ \overline{B^b(\r)}]])$ and $b|_{\overline{B^b(\r)}}$ as $b$. 
Clearly, we have that  $\vol_{b}(A)= \vol_{b|_{\overline{B^b(\r)}}}(A)$ for any $A \subset \overline{B^b(\r)}$ and  $\vol_{g}(A)= \vol_{g|_{\Omega(\r)}}(A)$ for any $A \subset \Omega(\r)$ and thus we will not use any subindex for the volume.

\begin{remark}\label{rmrk-totallyGeodBalls}
All open and closed balls in $(\Hb^n,b)$ are totally geodesic given that $b=  dr^2  + \sinh^2(r)\sigma$ with $r\in [0, \infty]$ and $\sinh$ is increasing. Indeed,  for any piece-wise Lipschitz curve $\gamma(t)=(r(t), \theta(t)):[0,1] \to \Hb^n$ joining any two points in $B^b(\r)$, the curve  $(\tilde r(t), \theta(t))$  with  $\tilde r(t)= \min\{r(t), \max\{r(0),r(1)\}\}$  has smaller length than $\gamma$ and is contained in $B^b(\r)$.  The same holds for  $\overline {B^b(\r)}$. 
\end{remark}

In this section, we first calculate some estimates that will be used in the proof of Theorem \ref{thm-MainO}.
We establish uniform intrinsic diameter bounds for regions of the form $\Omega(\r)$ and their volumes, 
and uniform volume bounds of their boundaries. We also show that for 
any sequence $M_j \in \G_n(\rho_0,\gamma,D)$ such that $\m(M_j) \to 0$, 
the volumes of the $\Omega_j(\r)$'s converge to the volume of the ball $\overline{B^b(\rho)}$.

In the last part we show that the annular regions  $\overline{\Omega_j (\r')}  \setminus \Omega_j(\r)$, $\r_0/2 < \r \leq \r'<\infty$,
converge in Gromov-Hausdorff sense to $ \overline{B^b(\r')}  \setminus B^b(\r)$, and that 
the sequence of inner boundaries of $\Omega_j(\r)$, $\bdry M_j$,  converge to the zero integral current space.
These two convergence results in combination with results in Section \ref{section:point-convergence} will be used in the proof of Theorem \ref{thm-MainB}.

%%%%%%%%%%%%%%%%%%%%%%%%%%%%%%%%%%%%%%
%%%%%%%%%%%%%%%%%%%%%%%%%%%%%%%%%%%%%%

\subsection{Volume estimates}\label{sec-volConv}

Here we get uniform upper volume estimates for $\vol( \Omega(\r))$ in terms of $\vol( \overline{B^b(\rho)})$
and an extra term that depends on the hyperbolic mass $\m(M)$ of $M$.  The key ingredients to obtain the estimates are an isoperimetric inequality for the hyperbolic space, the coarea formula and the fact that the manifolds are graphs that satisfy the properties listed in Definition \ref{def-Gn2}.

\bigskip

We recall the following isoperimetric inequality applicable to domains in the hyperbolic space.

\begin{prop}[{\cite[Proposition 3]{Yau}}]\label{prop-IsoIneq}
Let $M$ be a complete simply connected $n$-dimensional Riemannian manifold with sectional curvature bounded from above by $-K$, $K>0$. Then, for any compact domain $D \subset M$, 
\bee
\vol(D) \leq \vol(\bdry D) / \sqrt{K}(n-1).
\eee 
\end{prop}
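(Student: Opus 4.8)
The final statement to prove is Proposition~\ref{prop-IsoIneq}, the isoperimetric-type inequality in Cartan--Hadamard manifolds with sectional curvature bounded above by $-K<0$: for any compact domain $D$ in a complete simply connected $n$-manifold $M$ with $\mathrm{sec}\le -K$, one has $\vol(D)\le \vol(\bdry D)/(\sqrt{K}\,(n-1))$. My plan is to derive this from a comparison for the distance function together with the divergence theorem, which is the classical route.

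\textbf{Setup via the distance function.} First I would fix a point $o\in M$ and let $\rho(x)=d(o,x)$ be the distance to $o$. Since $M$ is Cartan--Hadamard, $\rho$ is smooth away from $o$, and $X=\grad^M \rho$ is a unit vector field on $M\setminus\{o\}$ with $|X|\equiv 1$. The key computation is the Laplacian (equivalently, divergence) comparison: under the curvature bound $\mathrm{sec}\le -K$, the Hessian of $\rho$ dominates that of the corresponding model of constant curvature $-K$, which gives
\bee
\dv^M X = \Lap \rho \ge (n-1)\sqrt{K}\coth(\sqrt{K}\,\rho) \ge (n-1)\sqrt{K}
\eee
on $M\setminus\{o\}$, using $\coth\ge 1$. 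Here the first inequality is the standard Hessian comparison theorem (Greene--Wu / Rauch), and I would cite it rather than reprove it.

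\textbf{Applying the divergence theorem.} Next, on the compact domain $D$ (which I may assume has $o$ in its interior after a translation, or handle the case $o\notin D$ separately where $X$ is smooth on all of $D$ and the argument is even cleaner — alternatively one integrates over $D\setminus B(o,\varepsilon)$ and lets $\varepsilon\to 0$, the boundary term over $\bdry B(o,\varepsilon)$ being $O(\varepsilon^{n-1})\to 0$), I integrate the inequality $\dv^M X\ge (n-1)\sqrt{K}$ over $D$ and apply the divergence theorem:
\bee
(n-1)\sqrt{K}\,\vol(D) \le \int_D \dv^M X\,\dvol = \int_{\bdry D} \la X,\nu\ra\, \dvol_{\bdry D} \le \int_{\bdry D} 1\,\dvol_{\bdry D} = \vol(\bdry D),
\eee
where $\nu$ is the outward unit normal and we used the Cauchy--Schwarz bound $\la X,\nu\ra \le |X||\nu| = 1$. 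Rearranging yields exactly $\vol(D)\le \vol(\bdry D)/(\sqrt{K}(n-1))$.

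\textbf{Main obstacle and remarks.} The only genuine subtlety is the non-smoothness of $\rho$ at the single point $o$: if $o\in D$, the vector field $X$ is not defined (and the divergence formula for $\coth$ blows up like $(n-1)/\rho$) at $o$. The standard fix — excising a small geodesic ball $B(o,\varepsilon)$, applying the divergence theorem on $D\setminus B(o,\varepsilon)$, and observing that the extra boundary integral over $\bdry B(o,\varepsilon)$ is bounded by $\vol(\bdry B(o,\varepsilon))\le C\varepsilon^{n-1}\to 0$ while $\int_{D\setminus B(o,\varepsilon)}\dv^M X \to \int_D \dv^M X$ by monotone/dominated convergence (the integrand is bounded below) — resolves it, and in fact one can choose $o\notin D$ entirely since $M$ is unbounded, making $\rho$ smooth on a neighborhood of $D$ and eliminating the issue altogether. (This is the cleanest presentation and is what I would write.) Everything else is routine; the proof is short once the Hessian comparison theorem is invoked as a black box.
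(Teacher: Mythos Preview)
Your argument is correct and is precisely the classical proof (and essentially Yau's original one): Laplacian comparison $\Lap\rho\ge (n-1)\sqrt{K}$ on a Cartan--Hadamard manifold together with the divergence theorem and $|\grad\rho|=1$. The paper itself does not supply a proof of this proposition; it is simply quoted from \cite{Yau}, so there is no in-paper argument to compare against.
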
 

We also recall  the following useful fact originally stated in the Euclidean case in the proof of Theorem 3.1 in \cite{HLS}.

\begin{prop}\label{prop-outmin}
Let $S=\partial^*E \subset \Hb^n$, for some $E \subset \Hb^n$, be an outer-minimizing hypersurface, where $\partial^*$ denotes the reduced boundary.  Then $\mathcal{H}^{n-1}(S\cap \overline{B^b(\r)})\le \mathcal{H}^{n-1}(\partial B^b(\r))$. 
\end{prop}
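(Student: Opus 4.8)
The statement asserts that for an outer-minimizing reduced boundary $S = \partial^* E$ in $\Hb^n$, the portion of $S$ lying inside the closed ball satisfies $\mathcal H^{n-1}(S \cap \overline{B^b(\r)}) \le \mathcal H^{n-1}(\partial B^b(\r))$. The plan is to exploit the outer-minimizing property by testing it against a competitor set obtained by ``filling in'' $E$ with the ball $B^b(\r)$. Concretely, I would set $F := E \cup B^b(\r)$, which is a bounded set containing $E$, so the definition of outer-minimizing gives $\mathcal H^{n-1}(\partial^* E) \le P(F)$. The next step is to estimate $P(F)$ from above using the structure of $F$: the reduced boundary $\partial^* F$ is contained (up to $\mathcal H^{n-1}$-null sets) in $(\partial^* E \setminus \overline{B^b(\r)}) \cup (\partial B^b(\r) \setminus E)$, since inside the open ball $F$ coincides with the full ball (killing any boundary of $E$ there), outside the closed ball $F$ coincides with $E$, and on the sphere $\partial B^b(\r)$ only the part not already interior to $E$ can contribute. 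Hence $P(F) \le \mathcal H^{n-1}(\partial^* E \setminus \overline{B^b(\r)}) + \mathcal H^{n-1}(\partial B^b(\r))$.

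Combining the two inequalities, $\mathcal H^{n-1}(\partial^* E) \le \mathcal H^{n-1}(\partial^* E \setminus \overline{B^b(\r)}) + \mathcal H^{n-1}(\partial B^b(\r))$. Now I would split $\mathcal H^{n-1}(\partial^* E) = \mathcal H^{n-1}(\partial^* E \cap \overline{B^b(\r)}) + \mathcal H^{n-1}(\partial^* E \setminus \overline{B^b(\r)})$ — valid because $\overline{B^b(\r)}$ and its complement partition $\Hb^n$ — and cancel the common finite term $\mathcal H^{n-1}(\partial^* E \setminus \overline{B^b(\r)})$ from both sides (finiteness is guaranteed since $\partial^* E = S$ is a hypersurface of an outer-minimizing set, hence locally of finite perimeter, and we may assume the relevant quantities are finite, else there is nothing to prove). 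This yields $\mathcal H^{n-1}(S \cap \overline{B^b(\r)}) \le \mathcal H^{n-1}(\partial B^b(\r))$, which is exactly the claim.

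\textbf{Main obstacle.} The delicate point is the perimeter estimate for the competitor $F = E \cup B^b(\r)$, specifically controlling the reduced boundary of a union of finite-perimeter sets. One must use that $P(E \cup B^b(\r)) + P(E \cap B^b(\r)) \le P(E) + P(B^b(\r))$ (submodularity of perimeter) together with a localization of the perimeter measures away from and on the sphere $\partial B^b(\r)$; equivalently one argues directly that $\partial^*(E \cup B^b(\r))$ is $\mathcal H^{n-1}$-essentially contained in $(\partial^* E \setminus \overline{B^b(\r)}) \cup \partial B^b(\r)$. Since $\partial B^b(\r)$ is a smooth hypersurface, there is no loss in passing from $\partial B^b(\r) \setminus E$ to all of $\partial B^b(\r)$; the only subtlety is ensuring no spurious boundary is created on the sphere beyond $\mathcal H^{n-1}(\partial B^b(\r))$, which follows from the smoothness of the sphere and the structure theorem for sets of finite perimeter. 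The hyperbolic setting plays no essential role here beyond $B^b(\r)$ being a bounded smooth domain, so the argument is identical in spirit to the Euclidean case referenced from \cite{HLS}.
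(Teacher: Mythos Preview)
Your proposal is correct and follows essentially the same approach as the paper: both take the competitor $F=E\cup \overline{B^b(\r)}$, apply the outer-minimizing inequality $\mathcal H^{n-1}(S)\le P(F)$, and then cancel the common part $S\setminus \overline{B^b(\r)}$ (the paper phrases this as subtracting $S\cap S'$ from both $S$ and $S'=\partial^*F$ and then observing $S\setminus S'=S\cap\overline{B^b(\r)}$ and $S'\setminus S\subset \partial B^b(\r)$, which is exactly your decomposition). The only cosmetic difference is that the paper works directly with the set-theoretic identities for $S\setminus S'$ and $S'\setminus S$ rather than invoking submodularity of perimeter.
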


\begin{proof}
 Let $S'=\partial^*(E\cup \overline{B^b(\r)})$. Then $\mathcal{H}^{n-1}(S)\le \mathcal{H}^{n-1}(S')$ by the outer-minimizing property of~$S$.    
Removing $S \cap S'$ from $S$ and $S'$, respectively,  we get 
$$\mathcal{H}^{n-1}(S\smallsetminus S') =  \mathcal{H}^{n-1}(S   \smallsetminus (S\cap S'))   \le       \mathcal{H}^{n-1}(S' \smallsetminus (S\cap S'))    \le \mathcal{H}^{n-1}(S'\smallsetminus S).$$    
Finally, note that $S\smallsetminus S'=S\cap \overline{B^b(\r)}$ and $S'\smallsetminus S\subset \partial B^b(\r)$. 
 \end{proof}

\bigskip

\begin{lemma}\label{lem-volO-}
Let $M \in \G_n(\rho_0, \gamma, D)$ be a manifold so that $\Psi(M)=\graph(f)$ and $\m(f)<1$.  Define 
$$\Omega^{-}(\rho) \defeq \Psi^{-1} \left( \overline{B^b(\rho)} \times (-\infty, h_0(f)) \right),$$ where $h_0(f)$
is given in Definition \ref{defn-h0}. Then, for any $\rho \geq \rho_0$ the following holds:
\bee
\vol(\Omega^{-}(\rho)) \leq 2\beta \omega_{n-1} \m(f)  \left(  c_n +  \cosh(\r) |h_0(f)-\min_{\overline{B^b(\rho)}  \setminus  U} f |  \right).
\eee
Here $c_n$ comes from the isoperimetric inequality, i.e. it equals $1/(n-1)$.
\end{lemma}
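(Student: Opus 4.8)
The goal is to bound the volume of the sub-$h_0$ slab region $\Omega^-(\rho)$ by an expression that is controlled by $\m(f)$. The natural tool is the coarea formula applied to the height function $h = f\circ\Pi$ (equivalently, the ``vertical'' coordinate $s$ on $\graph(f)$) restricted to $\Omega^-(\rho)$. First I would write
\be
\vol(\Omega^-(\rho)) = \int_{\min f}^{h_0(f)} \vol\big( \{ f = h \} \cap \overline{B^b(\rho)} \big)\, \frac{dh}{\text{(normal speed of } f \text{ in } \graph(f))},
\ee
but since the induced metric $g = b + V^2\, df\otimes df$ makes the level set $\{f=h\}$ sit inside $\graph(f)$ with the \emph{same} induced $(n-1)$-volume as it has in $(\Hb^n, b)$ (the extra term $V^2 df\otimes df$ vanishes on vectors tangent to a level set of $f$), and since $|\grad^g(f\circ\Pi)|_g \le |\grad^b f|_b$, the slicing over heights $h\in(\min f, h_0(f))$ gives
\be
\vol(\Omega^-(\rho)) \le \int_{\min f}^{h_0(f)} \mathcal{H}^{n-1}\big( f^{-1}(h) \cap \overline{B^b(\rho)} \big)\, dh + (\text{contribution from the flat part over } U),
\ee
modulo a careful treatment of the piece of $\Omega^-(\rho)$ lying over $U$, where $f$ is constant. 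I would handle that piece separately since over $U$ the graph is a horizontal disc and its contribution is exactly $\vol_b(U) \cdot$ (height range), which by $U\subset B^b(\rho_0/2)$ and the Riemannian Penrose inequality (Theorem \ref{RPI}, giving $\vol(\partial U)\le 2\w_{n-1}\m(f)$) together with the isoperimetric inequality (Proposition \ref{prop-IsoIneq}) is bounded by $c_n \vol(\partial U)\cdot |h_0(f) - \min f| \le c_n\, 2\w_{n-1}\m(f)\, |h_0(f) - \min f|$ — actually more carefully one folds this into the $\cosh(\rho)$ term below.

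The main estimate is then controlling $\mathcal{H}^{n-1}(f^{-1}(h)\cap\overline{B^b(\rho)})$ for a.e.\ $h < h_0(f)$. By condition (2) of Definition \ref{def-Gn}, for a.e.\ $h$ the level set $f^{-1}(h)$ is outer-minimizing in $\Hb^n$, so Proposition \ref{prop-outmin} yields $\mathcal{H}^{n-1}(f^{-1}(h)\cap\overline{B^b(\rho)}) \le \mathcal{H}^{n-1}(\partial B^b(\rho))$. That bound alone is too crude — it has no $\m(f)$ factor. The point is that I should split the height interval $(\min f, h_0(f))$ at a level where the level-set area transitions: for $h\le h_0(f)$, by the \emph{definition} of $h_0(f)$ (Definition \ref{defn-h0}) combined with the monotonicity of $\V$ established in \cite{C} (the level-set perimeter $\mathcal{H}^{n-1}(f^{-1}(h))$ is non-decreasing in $h$ on the relevant range, since $\V$ is non-decreasing), we get $\mathcal{H}^{n-1}(f^{-1}(h)) \le 2\beta\w_{n-1}\m(f)$ for all $h\le h_0(f)$ (using $\m(f)<1$). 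Hence $\mathcal{H}^{n-1}(f^{-1}(h)\cap\overline{B^b(\rho)}) \le \min\{ 2\beta\w_{n-1}\m(f),\ \mathcal{H}^{n-1}(\partial B^b(\rho))\} \le 2\beta\w_{n-1}\m(f)$ for a.e.\ $h<h_0(f)$. Integrating over the height interval of length $|h_0(f) - \min_{\overline{B^b(\rho)}\setminus U} f|$ gives the $2\beta\w_{n-1}\m(f)\cdot|h_0(f)-\min f|$ term, but I still need the $\cosh(\rho)$ weight and the $c_n$ term — these come from the normalization issue and the part over $U$.

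Where does $c_n + \cosh(\rho)|h_0(f) - \min f|$ come from precisely? The $\cosh(\rho) = V(\rho)$ factor enters because the vertical coordinate $s$ on $\graph(f)$ and the function $f$ differ: the $\bar b$-length in the $s$-direction at radius $r$ is weighted by $V(r) = \cosh(r)$, so when computing $\vol_g$ by slicing one picks up a factor $V$, which over $\overline{B^b(\rho)}$ is at most $\cosh(\rho)$; this is what multiplies the ``height $\times$ level-set-area'' term. The $c_n = 1/(n-1)$ term is the free-standing contribution from the region of $\Omega^-(\rho)$ that sits over $U$ — there $f$ is constant so $f^{-1}$ is not a level set in the graph sense, but one estimates this solid piece directly via the isoperimetric inequality applied to $U$ in $\Hb^n$: $\vol_b(U) \le \vol(\partial U)/(n-1) = c_n\,\vol(\partial U) \le c_n\, 2\beta\w_{n-1}\m(f)$ using Theorem \ref{RPI} and $\beta>1$. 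Assembling the two contributions and pulling out the common $2\beta\w_{n-1}\m(f)$ gives the claimed bound. \textbf{The main obstacle} I anticipate is the bookkeeping of the region over $U$ versus over $\overline{B^b(\rho)}\setminus U$: I must make sure the coarea slicing of the graph is legitimate despite $f$ being merely continuous (not smooth) across $\partial U$, and that the ``$\min_{\overline{B^b(\rho)}\setminus U} f$'' in the statement — rather than $\min f$ over all of $\Hb^n$ — is the correct quantity, i.e.\ that the constant value of $f$ on $U$ does not make things worse; this should follow because $f$ constant on $U$ equals the boundary value $\min_{\partial U} f \ge \min_{\overline{B^b(\rho)}\setminus U} f$, so the height range over $U$ is no larger. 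The smoothness-across-$\partial U$ issue is mild since $f$ is smooth on $\Hb^n\setminus\overline U$ and the level sets for a.e.\ $h$ avoid $\partial U$ (as $f$ is constant there, only the single value $f|_{\partial U}$ is affected, a measure-zero set of heights).
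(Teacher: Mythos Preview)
Your coarea-on-the-graph instinct is right in spirit, but the bookkeeping is confused in a way that misidentifies where the $c_n$ term comes from. There is \emph{no} ``region of $\Omega^-(\rho)$ that sits over $U$'': the graph $\graph(f)$ is only defined over $\Hb^n\setminus U$, so $\Omega^-(\rho)$ projects via $\Pi$ onto $(\overline{B^b(\rho)}\setminus U)\cap\{f<h_0\}$ and nothing more. Consequently your explanation of the $c_n$ term as ``the contribution from the horizontal disc over $U$'' is not where the term arises, and invoking Theorem~\ref{RPI} on $\vol(\partial U)$ is unnecessary here.

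The paper's argument is more direct and avoids this confusion: since the induced metric on the graph is $g=b+V^2\,df\otimes df$, one has
\[
\vol(\Omega^-(\rho))=\int_{\overline{B^b(\rho)}\cap\{f<h_0\}}\sqrt{1+V^2|\nabla^b f|^2}\,\dvol_b
\le \int_{\overline{B^b(\rho)}\cap\{f<h_0\}}\bigl(1+V|\nabla^b f|\bigr)\,\dvol_b.
\]
The ``$1$'' term is $\H^n\bigl(f^{-1}(-\infty,h_0)\bigr)$, and \emph{this} is what the isoperimetric inequality (Proposition~\ref{prop-IsoIneq}) bounds by $c_n\,\H^{n-1}(f^{-1}(h_i))\le c_n\cdot 2\beta\omega_{n-1}\m(f)$, using that for regular $h_i\nearrow h_0$ the star-shaped level sets have area $\le 2\beta\omega_{n-1}\m(f)$ by Definition~\ref{defn-h0}. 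The ``$V|\nabla^b f|$'' term is handled by coarea \emph{in $\Hb^n$} (not on the graph), bounding $V\le\cosh(\rho)$ and each slice area by $2\beta\omega_{n-1}\m(f)$; the integration range is exactly $[\min_{\overline{B^b(\rho)}\setminus U} f,\,h_0(f)]$. If you rerun your coarea on the graph carefully, using $|\nabla^g h|_g=|\nabla^b f|_b/\sqrt{1+V^2|\nabla^b f|^2}$ and $\sqrt{1+V^2|\nabla^b f|^2}/|\nabla^b f|_b\le 1/|\nabla^b f|_b+V$, you will land on precisely these same two terms---so the route can be made to work, but the ``over $U$'' narrative should be dropped entirely.
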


\begin{proof} 
By standard computations and the coarea formula, 
\begin{align*}
\vol(\Omega^{-}(\rho)) &= \int_{     \overline{B^b(\r)} \cap     \{ f< h_0(f)\}  } \sqrt{1 + V^2|\grad^b f|^2} \, \dvol_b \\ 
&\leq  \int_{      \overline{B^b(\r)} \cap     \{f< h_0(f)\} }     (1 + V|\grad^b f| ) \, \dvol_b \nonumber \\
  &\leq \H^{n}(f^{-1}(-\infty, h_0(f))) + \cosh(\rho) \int_{-\infty}^{h_0(f)} \H^{n-1}(f^{-1}(s) \cap \overline{B^b(\rho)})\,ds.\nonumber
\end{align*}
Now we calculate the first term on the right hand side.
By Definition \ref{defn-h0}, for all regular values $h \leq h_0(f)$ of $f$,
\bee
\H^{n-1}(f^{-1}(h)) \leq 2\beta \omega_{n-1} \m(f).
\eee  
If necessary, taking a non decreasing sequence of regular values $h_i \in \mathbb R$ of $f$ with $\lim_{i \to \infty} h_i= h_0(f)$ and for which $f^{-1}(h_i)$ is star-shaped, 
and applying the isoperimetric inequality, Proposition \ref{prop-IsoIneq}, we have
\begin{align}\label{eq-bound1O-}
\H^n(  f^{-1}(-\infty,  h_0(f))) &= \lm{i}{\infty} \H^{n}(f^{-1}(-\infty, h_i) ) \leq c_n    2\beta \omega_{n-1} \m(f). 
\end{align}
For the second term, we use again the fact that for  any regular value  $h \leq h_0(f)$ of $f$ we have 
$\H^{n-1}(f^{-1}(h)) \leq 2\beta \omega_{n-1} \m(f)$ and, we note that $\min f \leq h_0(f)$. 
Hence, 
\be\label{eq-bound2O-}
\int_{-\infty}^{h_0(f)} \H^{n-1}(f^{-1}(s) \cap \overline{B^b(\rho)})\,ds \leq     2\beta \omega_{n-1} \m(f) | h_0(f)-\min_{\overline{B^b(\rho)}  \setminus  U} f|.
\ee
Adding \eqref{eq-bound1O-} and \eqref{eq-bound2O-}, 
\bee 
\vol(\Omega^{-}(\rho)) \leq   2\beta \omega_{n-1} \m(f) \left( c_n +  \cosh(\r)  |h_0(f)-\min_{\overline{B^b(\rho)}  \setminus  U} f | \right).
\eee  
\end{proof}

\begin{lemma}\label{lem-volO+}
Let $M \in \G_n(\rho_0, \gamma, D)$ be a manifold so that $\Psi(M)=\graph(f)$ and $\m(f)<1$.  Define 
$$\Omega^+(\r) \defeq \Psi^{ -1} \left( \overline{B^b(\rho)} \times [h_0(f),\infty) \right).$$
Then, for any $\rho \geq \rho_0$ the following holds:
\bee
\vol(\Omega^+(\rho)) \leq \vol( \overline{B^b(\rho)}) +  C\m(f)^{1/(n-2)} \cosh(\rho) \vol(\pr B^b(\rho)).
\eee
Here $C=C(n)$ comes from Theorem \ref{thm-fUpperB}.
\end{lemma}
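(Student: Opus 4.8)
The plan is to run essentially the same computation as in the proof of Lemma \ref{lem-volO-}, but truncating the graph from \emph{above} rather than from below, and replacing the isoperimetric inequality by Theorem \ref{thm-fUpperB} together with Proposition \ref{prop-outmin}. Unwinding the definitions, $\Omega^+(\r)$ is, in the chart $\Pi$, the set $A\defeq(\overline{B^b(\r)}\setminus\overline U)\cap\{f\ge h_0(f)\}$ (here $\r\ge\r_0$ guarantees $\overline U\subset \overline{B^b(\r)}$, so this description is unambiguous), and since the induced metric on $\graph(f)$ satisfies $\dvol_g=\sqrt{1+V^2|\grad^b f|_b^2}\,\dvol_b$, the elementary bound $\sqrt{1+t^2}\le 1+t$ for $t\ge 0$ gives
\[
\vol(\Omega^+(\r))=\int_A\sqrt{1+V^2|\grad^b f|_b^2}\,\dvol_b\le \int_A\bigl(1+V|\grad^b f|_b\bigr)\,\dvol_b\le \vol(\overline{B^b(\r)})+\cosh(\r)\int_A|\grad^b f|_b\,\dvol_b,
\]
where I used $A\subset\overline{B^b(\r)}$ to bound the ``$1$'' term by $\vol(\overline{B^b(\r)})$ and $V=\cosh r\le\cosh\r$ on $\overline{B^b(\r)}$. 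Note this is the place where the argument departs from Lemma \ref{lem-volO-}: the superlevel set $\{f\ge h_0(f)\}$ is unbounded, so there is no isoperimetric control on its volume and one simply estimates by the whole ball.

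For the remaining integral I would apply the coarea formula, observing that the integrand is supported in $\{f\ge h_0(f)\}$ and that $\sup f<\infty$ (by Theorem \ref{thm-fUpperB}, or the discussion preceding Definition \ref{defn-h0}), to get
\[
\int_A|\grad^b f|_b\,\dvol_b=\int_{h_0(f)}^{\sup f}\H^{n-1}\bigl(f^{-1}(s)\cap A\bigr)\,ds\le\int_{h_0(f)}^{\sup f}\H^{n-1}\bigl(f^{-1}(s)\cap\overline{B^b(\r)}\bigr)\,ds.
\]
For almost every $s$ the level set $f^{-1}(s)$ is star-shaped and outer-minimizing in $\Hb^n$ by condition $(2)$ of Definition \ref{def-Gn}; concretely, for such a regular value $s$ one identifies $f^{-1}(s)$ with $\partial^*\{\bar f<s\}$, a \emph{bounded} outer-minimizing hypersurface, so Proposition \ref{prop-outmin} applies and yields $\H^{n-1}(f^{-1}(s)\cap\overline{B^b(\r)})\le\H^{n-1}(\partial B^b(\r))=\vol(\partial B^b(\r))$. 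Hence the last integral is at most $(\sup f-h_0(f))\,\vol(\partial B^b(\r))$, which by Theorem \ref{thm-fUpperB} is $<C\,\m(f)^{1/(n-2)}\,\vol(\partial B^b(\r))$. Collecting the estimates gives exactly the asserted bound.

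I do not foresee a genuine obstacle; the argument is a short assembly of results already available in the excerpt. The only points requiring care are the measure-theoretic bookkeeping: that the coarea formula may be applied, that the non-regular values of $f$ and the boundary $\partial U$ are null sets and so do not affect the volume integrals, and in particular that for a.e.\ $s$ the sublevel set $\{\bar f<s\}$ is a bounded finite-perimeter set with reduced boundary $f^{-1}(s)$, which is what makes Proposition \ref{prop-outmin} (whose comparison construction needs a bounded set) legitimately applicable here.
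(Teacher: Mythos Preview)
Your proof is correct and follows essentially the same approach as the paper: expand the volume form, use $\sqrt{1+t^2}\le 1+t$, bound the constant term by $\vol(\overline{B^b(\r)})$, apply the coarea formula, invoke Proposition~\ref{prop-outmin} for the outer-minimizing level sets, and conclude with the height bound from Theorem~\ref{thm-fUpperB}. The only differences are cosmetic (you integrate up to $\sup f$ rather than $\infty$, and you are more explicit about the measure-theoretic justifications).
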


\begin{proof}
We estimate the volume of $\Omega^+(\rho)$ in the same way as in the proof of Lemma \ref{lem-volO-},
\begin{align*}
\vol(\Omega^+(\rho)) &= \int_{  \overline{B^b(\r)} \cap  \{ f >  h_0(f) \}}  \sqrt{1 + V^2|\grad^b f|^2} \, \dvol_b \\
&\leq  \int_{  \overline{B^b(\r)} \cap       \{f >  h_0(f) \}} (1 + V|\grad^b f| ) \, \dvol_b \\  
&\leq \vol( \overline{B^b(\rho)}) + \cosh(\rho) \int_{h_0(f)}^{\infty} \H^{n-1}(f^{-1}(h) \cap \overline{B^b(\rho)})\,dh, 
\end{align*}
Since $f \in \G_n(\rho_0, \gamma, D)$,  almost every level set of $f$ is outer-minimizing. Thus we can apply 
Proposition \ref{prop-outmin}. Hence, $\H^{n-1}(f^{-1}(h) \cap \overline{ B^b(\rho)}) \leq \vol(\pr B^b(\rho))$ almost everywhere. 
Moreover, by Theorem \ref{thm-fUpperB},  $f  <C\m(f)^{1/(n-2)} + h_0(f)$.
It follows that
\bee
\vol(\Omega^+(\rho)) \leq \vol( \overline{B^b(\rho)}) +  C\m(f)^{1/(n-2)} \cosh(\rho) \vol(\pr B^b(\rho)).
\eee
\end{proof}

\begin{coro}\label{cor-limsupvol}
Let $M \in \G_n(\rho_0, \gamma, D)$ be a manifold so that $\m(M) < 1$.
Then for any $\rho \geq \rho_0$, 
\begin{align*}
\vol(\Omega(\rho)) \leq  &  \vol( \overline{B^b(\rho)}) +  C\m(M)^{1/(n-2)} \cosh(\rho) \vol(\pr B^b(\rho)) + \\
& 2\beta \omega_{n-1} \m(M) \left( c_n +  \cosh(\r)   | h_0(f) - \min_{\overline{B^b(\rho)}  \setminus  U} f | \right).
\end{align*}
\end{coro}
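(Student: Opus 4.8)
The plan is straightforward: Corollary \ref{cor-limsupvol} is nothing more than the combination of the two preceding lemmas applied to the decomposition of $\Omega(\rho)$ into its sub- and super-level pieces. First I would observe that, given $M \in \G_n(\rho_0,\gamma,D)$ with $\Psi(M) = \graph(f)$ and $\m(M) < 1$, the region $\Omega(\rho) = \Psi^{-1}(\overline{B^b(\rho)} \times \R)$ splits (up to a set of measure zero, namely $\Psi^{-1}(\overline{B^b(\rho)} \times \{h_0(f)\})$) as the essentially disjoint union $\Omega(\rho) = \Omega^-(\rho) \cup \Omega^+(\rho)$, where $\Omega^-(\rho) = \Psi^{-1}(\overline{B^b(\rho)} \times (-\infty, h_0(f)))$ and $\Omega^+(\rho) = \Psi^{-1}(\overline{B^b(\rho)} \times [h_0(f), \infty))$ are exactly the sets appearing in Lemma \ref{lem-volO-} and Lemma \ref{lem-volO+}. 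Since $\Psi$ is a Riemannian isometric embedding, $\vol(\Omega(\rho)) = \vol(\Omega^-(\rho)) + \vol(\Omega^+(\rho))$.

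Next I would simply invoke the two lemmas. Lemma \ref{lem-volO-} gives
\bee
\vol(\Omega^-(\rho)) \leq 2\beta \omega_{n-1} \m(f)\left(c_n + \cosh(\rho)\, |h_0(f) - \min_{\overline{B^b(\rho)}\setminus U} f|\right),
\eee
and Lemma \ref{lem-volO+} gives
\bee
\vol(\Omega^+(\rho)) \leq \vol(\overline{B^b(\rho)}) + C\m(f)^{1/(n-2)}\cosh(\rho)\,\vol(\pr B^b(\rho)),
\eee
both valid for all $\rho \geq \rho_0$ under the hypothesis $\m(f) < 1$. Adding these two inequalities and using $\m(f) = \m(M)$ yields exactly the claimed bound. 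The hypothesis $\m(M) < 1$ is needed precisely so that the two lemmas apply (and so that the simplified form of $h_0(f)$ from Definition \ref{defn-h0} is in force).

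There is essentially no obstacle here — the statement is a bookkeeping corollary, and the only point requiring a word of care is that $\Omega^-(\rho)$ and $\Omega^+(\rho)$ overlap only in the measure-zero slice $\Psi^{-1}(\overline{B^b(\rho)} \times \{h_0(f)\})$, so that additivity of volume holds. (One should also note that $\min_{\overline{B^b(\rho)}\setminus U} f$ is finite since $\overline{B^b(\rho)}\setminus U$ is compact and $f$ is continuous on it, so the right-hand side is a genuine bound.) All the analytic content — the isoperimetric inequality, the coarea formula, the outer-minimizing property of level sets, and the height estimate of Theorem \ref{thm-fUpperB} — has already been absorbed into Lemma \ref{lem-volO-} and Lemma \ref{lem-volO+}, so the proof is just their sum.
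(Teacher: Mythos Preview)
Your proposal is correct and matches the paper's approach exactly: the corollary is stated without proof immediately after Lemmas \ref{lem-volO-} and \ref{lem-volO+}, precisely because it is just the sum of those two estimates applied to the decomposition $\Omega(\rho)=\Omega^-(\rho)\cup\Omega^+(\rho)$. (A minor remark: as defined, $\Omega^-(\rho)$ and $\Omega^+(\rho)$ are in fact disjoint, not merely overlapping on a measure-zero set, so your caution there is harmless but unnecessary.)
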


\begin{remark}
Note that there is an easier way to obtain a uniform upper bound estimate for $\vol(\Omega(\r))$. Indeed, 
by the coarea formula and Proposition \ref{prop-outmin}, it follows that 
\bee
\vol(\Omega(\r))\le  \vol( \overline{B^b(\r))} +   |\max_{ \overline{B^b(\r)}\smallsetminus U} f - \min_{\overline{B^b(\r)} \smallsetminus U} f| \cosh(\r)\vol(\partial B^b(\r)).
\eee
Nonetheless, this bound does not immediately imply convergence of the $\vol(\Omega(\r))$ to $\vol(\overline{B^b(\r)})$ provided 
 $\m(M) \to 0$.   Since the estimate in Corollary \ref{cor-limsupvol} involves $\m(M)$, this estimate implies the aforementioned volume convergence as we will see in Theorem \ref{thm-IFbounds}. This is important to be able to
 apply Theorem \ref{convBdry} in the proof of  Theorem \ref{thm-MainO}.
 \end{remark}

%%%%%%%%%%%%%%%%%%%%%%%%%%%%%%%%%%%%%%%%
%%%%%%%%%%%%%%%%%%%%%%%%%%%%%%%%%%%%%%%%
%%%%%%%%%%%%%%%%%%%%%%%%%%%%%%%%%%%%%%%%

\subsection{Diameter bounds, area bounds and volume convergence}\label{sec-IFlim}

Now we prove that for any sequence $\{M_j\} \subset \mathcal  G_n(\r_0,\g,D)$ with mass $\m(M_j)$ converging to zero, the sequence $\Omega_j(\rho)$
has uniform intrinsic diameter bounds, the volumes converge to the volume of the ball $\overline{B^b(\rho)}$ and, the boundaries have uniform volume bounds. 
These estimates will be used in the proof of Theorem \ref{thm-MainO} and Lemma \ref{lem-inner-gone}.

\bigskip

\begin{thm}\label{thm-IFbounds}
Let $\{M_j\} \subset \mathcal G_n(\rho_0,\gamma, D)$ be a sequence of manifolds such that $\lim_{j \to \infty}\m(f_j)=0$ and $\r \geq \r_0$. Then there exist 
$D_0(\r_0, \gamma, D, \r), C_0(\gamma, \r) >0$ so that
\be\label{eq-diamBd}
\diam((\Omega_j(\r),\di_{\Omega_j(\r)})) \le D_0(\r_0, \gamma, D, \r), \quad \vol(\partial \Omega_j(\r)) \leq C_0(\gamma, \r),
\ee
where $\diam(\Omega_j(\r),\di_{\Omega_j(\r)})$ is the diameter of $\Omega_j(\r)$ with respect to $\di_{\Omega_j(\r)}$, 
the intrinsic length distance in $\Omega_j(\rho_0)$ induced by $g_j|_{\Omega_j(\rho_0)}$.
\footnote{Note however that then the estimate clearly also holds for the diameter of $\Omega_j(\r)$ with respect to $d_{M_j}$ because $d_{M_j}\leq \di_{\Omega_j(\r)}$.} Further 
\bee
\vol(\Omega_j(\rho))  \to   \vol( \overline{B^b(\rho)}). 
\eee
\end{thm}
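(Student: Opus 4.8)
The plan is to establish the three claims in turn: the uniform intrinsic diameter bound, the uniform boundary area bound, and the volume convergence. First I would treat the boundary area bound. The boundary $\partial \Omega_j(\r)$ consists of two pieces: the inner boundary $\partial M_j$ (if $f_j$ is not entire), and $\S_j(\r) = \partial\Omega_j(\r)\setminus\partial M_j$. For $\partial M_j$ we apply the Riemannian Penrose like inequality, Theorem \ref{RPI}, which gives $\vol(\partial M_j)\leq 2\w_{n-1}\m(f_j)\to 0$, so this term is uniformly bounded (indeed goes to zero). For $\S_j(\r)$, note that $\S_j(\r)$ is the graph over $\partial B^b(\r)$ of $f_j$ restricted there, so by the area formula $\vol(\S_j(\r)) = \int_{\partial B^b(\r)}\sqrt{1+V^2|\grad^b f_j|_b^2}\,\dvol$, where the tangential gradient is meant; using condition (4) of Definition \ref{def-Gn2}, namely $V^2|\grad^b f_j|_b^2\leq\gamma^2$ for $r\geq\r_0/2$, this is bounded by $\sqrt{1+\gamma^2}\,\vol(\partial B^b(\r))=:C_0(\gamma,\r)$. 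Summing the two contributions gives the stated bound (with a harmless adjustment of the constant).

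Next, the diameter bound. The idea is to decompose any two points $p,q\in\Omega_j(\r)$ by routing a path through $\S_j(\r_0)$. By condition (5) of Definition \ref{def-Gn2}, every point of $\Omega_j(\r_0)$ is within intrinsic distance $D$ of $\S_j(\r_0)$. For the annular region $\Omega_j(\r)\setminus\Omega_j(\r_0)$, which is a graph over the annulus $\overline{B^b(\r)}\setminus B^b(\r_0)$, the condition $V^2|\grad^b f_j|_b^2\leq\gamma^2$ for $r\geq\r_0/2$ gives a uniform bilipschitz bound between $g_j|_{\Omega_j(\r)\setminus\Omega_j(\r_0)}$ and the pullback of $b$; hence the intrinsic diameter of this annular piece is bounded by $\sqrt{1+\gamma^2}$ times the $b$-diameter of $\overline{B^b(\r)}\setminus B^b(\r_0)$ (using that $\overline{B^b(\r)}$ is totally geodesic by Remark \ref{rmrk-totallyGeodBalls}), which depends only on $\gamma$ and $\r$. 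Concatenating: any point of $\Omega_j(\r)$ reaches $\S_j(\r_0)$ in intrinsic distance at most $D + (\text{diam of annular piece})$, and then two such points are joined through $\S_j(\r_0)$, whose own intrinsic diameter is controlled since it is a graph over $\partial B^b(\r_0)$ with slope bound $\gamma$. This yields $D_0(\r_0,\gamma,D,\r)$. One must be slightly careful that paths realizing the annular bilipschitz estimate stay inside $\Omega_j(\r)$, but since the annulus in $\Hb^n$ lifts to the graph and the bilipschitz constants are uniform, this is fine.

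Finally, volume convergence. By Corollary \ref{cor-limsupvol},
\bee
\vol(\Omega_j(\r)) \leq \vol(\overline{B^b(\r)}) + C\m(f_j)^{1/(n-2)}\cosh(\r)\vol(\partial B^b(\r)) + 2\beta\w_{n-1}\m(f_j)\bigl(c_n + \cosh(\r)|h_0(f_j) - \min_{\overline{B^b(\r)}\setminus U_j} f_j|\bigr).
\eee
Since $\m(f_j)\to 0$, the first two correction terms vanish; for the third it suffices to show $\m(f_j)\,|h_0(f_j) - \min_{\overline{B^b(\r)}\setminus U_j}f_j|\to 0$. After the normalization $h_0(f_j)=0$ this amounts to $\m(f_j)\,|\min_{\overline{B^b(\r)}\setminus U_j}f_j|\to 0$. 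The lower bound on $\min f_j$ comes from controlling how far below $h_0(f_j)$ the function can descend over $\overline{B^b(\r)}$; since the level sets below $h_0$ have area $\leq 2\beta\w_{n-1}\m(f_j)$ and are star-shaped with controlled geometry, one bounds the descent in terms of $\m(f_j)$ and $\r$ (this is exactly the kind of estimate used in \cite{C} to control the lower part of the graph), so the product with $\m(f_j)$ still tends to zero. This gives $\limsup_j\vol(\Omega_j(\r))\leq\vol(\overline{B^b(\r)})$. For the reverse inequality, $\Omega_j(\r)$ is a graph over $\overline{B^b(\r)}\setminus U_j$ with $U_j\subset B^b(\r_0/2)$ of area $\vol(U_j)\leq 2\w_{n-1}\m(f_j)$ (again via Theorem \ref{RPI} and the isoperimetric inequality applied to the minimal boundary), and $\vol(\Omega_j(\r))\geq\vol(\overline{B^b(\r)}\setminus U_j)=\vol(\overline{B^b(\r)})-\vol(U_j)\to\vol(\overline{B^b(\r)})$. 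Combining, $\vol(\Omega_j(\r))\to\vol(\overline{B^b(\r)})$.

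\textbf{Main obstacle.} I expect the genuine difficulty to lie in the third correction term of the volume estimate — showing $\m(f_j)\,|h_0(f_j)-\min_{\overline{B^b(\r)}\setminus U_j}f_j|\to 0$ — since a priori the minimum of $f_j$ over the fixed ball could be very negative; controlling it requires re-using the quantitative descent estimates from \cite{C} (via the function $\V(h)$ and the star-shapedness/outer-minimizing hypotheses) rather than anything formal. A secondary technical point is ensuring that the bilipschitz comparison on the annular graph region, used for the diameter bound, genuinely produces competitor paths lying inside $\Omega_j(\r)$ and not merely in the ambient $\graph(f_j)$; this is handled by the totally geodesic structure of hyperbolic balls (Remark \ref{rmrk-totallyGeodBalls}) lifted to the graph.
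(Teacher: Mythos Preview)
Your overall strategy matches the paper's proof: same decomposition for the boundary area (Theorem \ref{RPI} for $\partial M_j$, the gradient bound $V^2|\nabla^b f_j|^2\le\gamma^2$ for $\Sigma_j(\r)$), same routing argument through $\Sigma_j(\r_0)$ for the diameter, and Corollary \ref{cor-limsupvol} together with the lower bound $\vol(\Omega_j(\r))\ge\vol(\overline{B^b(\r)})-\vol(U_j)$ for the volume convergence.

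Where you diverge is in handling the term $\m(f_j)\,|h_0(f_j)-\min_{\overline{B^b(\r)}\setminus U_j} f_j|$, which you flag as the main obstacle and propose to attack via the descent estimates for $\V(h)$ from \cite{C}. This is both harder than necessary and, as stated, incomplete: small level-set areas below $h_0$ do not by themselves bound how far down $f_j$ can go (a thin deep well has small level-set areas at every height), so the $\V(h)$ machinery alone will not control $\min f_j$ without invoking condition (5). The paper instead closes this term in one line using the diameter bound you have already established. For any $x,y\in \overline{B^b(\r)}\setminus U_j$,
\[
|f_j(x)-f_j(y)|\le d_{\Hb^{n+1}}\big((x,f_j(x)),(y,f_j(y))\big)\le d_{M_j}\le \di_{\Omega_j(\r)}\le D_0,
\]
so $\max f_j-\min f_j\le D_0$ on $\overline{B^b(\r)}\setminus U_j$, and since $h_0(f_j)$ lies between these (by Theorem \ref{thm-fUpperB} and Definition \ref{defn-h0}) the offending factor is bounded by $D_0$ uniformly in $j$. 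The product with $\m(f_j)$ then goes to zero with no further work. In short, the depth condition (5) enters precisely through the diameter bound, and that is what controls the well depth; you do not need to revisit \cite{C}.
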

\bigskip

\begin{proof}
Recall that each $M_j \in \mathcal G_n(\r_0,\g,D)$ 
satisfies   \eqref{eq-Vgrad}:  $$V^2(r) |\grad^b f_j|^2 (r, \theta)\le \gamma^2  \qquad \forall r \ge \tfrac{\r_0}{2},\theta\in \mathbb{S}^{n-1}.$$
Any two points in $\Psi_j^{-1} (\overline{B^b(\r)} \setminus B^b(\r_0) \times \R) \subset \Omega_j(\r)$ can be connected by first moving radially inward along the graph of $f_j$, then following an arc on $\Sigma_j(\r_0)$ and then connecting radially outward to the desired endpoint. By the above estimate the length of the first and last segment of this curve must each be bounded by $(\r - \r_0) \sqrt{1+ \gamma^2}$ and the length of the middle segment is bounded by $\pi \sinh(\r_0) \sqrt{1+ \gamma^2}$ since it is an arc on $\Sigma_j(\r_0)$, so the total length of the connecting curve is no longer than
$$2(\r - \r_0) \sqrt{1+ \gamma^2} + \pi \sinh(\r_0) \sqrt{1+ \gamma^2}.$$ 
For points contained in $\Psi_j^{-1} (\overline{B^b(\r_0)} \times \R)$, we note that,  while the definition of depth in Definition \ref{def-Gn2} uses the restricted distance $d_M$, one clearly also has
	\bee
	\depth(\Omega_j(\r_0)) = \sup \{ \di_{\Omega_j(\rho_0)}(p,\S_j(\r_0))\, : \, p \in \Omega_j(\r_0) \}.
	\eee   
Thus by the triangle inequality, 
\bee
\diam(\Omega_j(\r),\di_{\Omega_j(\r)}) \le 2D + 2(\r - \r_0) \sqrt{1+ \gamma^2} + \pi \sinh(\r_0) \sqrt{1+ \gamma^2}=:D_0(\r_0, \gamma, D, \r).
\eee
Hence, we have the required uniform upper diameter bound, denoted by $D_0=D_0(\r_0, \gamma, D, \r)$. 
\bigskip

By Theorem \ref{RPI} (and $V\geq 1$) and  \eqref{eq-Vgrad},
we get   
\begin{align*}
\vol(\partial \Omega_j(\r)) &= \vol(\pr M_j) + \vol(\S_j(\r))\\
&\le  2\w_{n-1} \m(f_j) + \sqrt{1+\gamma^2} \vol(\partial B^b(\r)).
\end{align*}
Since $\lim_{j \to \infty}\m(f_j)=0$ it follows that $\vol(\partial \Omega_j(\r))$ is uniformly bounded.

\bigskip
Since $\lim_{j \to \infty}\m(f_j)=0$ we can assume that $\m(M_j) <1$ and apply Corollary \ref{cor-limsupvol}, 
\begin{align*}
\vol(\Omega_j(\rho)) \leq  &  \vol( \overline{B^b(\rho)}) +  C\m(M_j)^{1/(n-2)} \cosh(\rho) \vol(\pr B^b(\rho)) + \\
& 2\beta \omega_{n-1} \m(M_j) ( c_n +  \cosh(\r)   | h_0(f_j) -  \min_{\overline{B^b(\rho)}  \setminus  U_j} f_j | ).
\end{align*}
Let us find a uniform upper bound for $| h_0(f_j) -  \min_{\overline{B^b(\rho)}  \setminus  U_j} f_j | $.  Note that for any $x,y \in \Hb^n \setminus U_j$
$$ |f_j(x) - f_j(y)| \leq d_{\Hb^{n+1}} ((x,f_j(x)), (y,f_j(y))) \leq d_{M_j} ( \Psi_j^{-1}(x, f_j(x)), \Psi_j^{-1}(y, f_j(y)) ).$$
When restricting the previous to $\overline{B^b(\r)} \setminus U_j$ and using $d_{M_j}\leq \di_{\Omega_j(\r)}$ we get, 
\begin{equation}\label{eq-maxmin}
\max_{ \overline{B^b(\r)}\smallsetminus U_j} f_j - \min_{\overline{B^b(\r)}\smallsetminus U_j} f_j  \leq 
\diam(\Omega_j(\r),\di_{\Omega_j(\r)}) \leq D_0 .
\end{equation}
Thus,
\begin{align}\label{eq-V_0}
\vol(\Omega_j(\rho)) \leq  &  \vol( \overline{B^b(\rho)}) +  C\m(M_j)^{1/(n-2)} \cosh(\rho) \vol(\pr B^b(\rho)) + \\
& 2\beta \omega_{n-1} \m(M_j) ( c_n +  \cosh(\r) D_0) \to \vol( \overline{B^b(\rho)}). \nonumber
\end{align}
Finally, since each $M_j$ is isometric to $\graph(f_j)$, we also have 
\begin{align}\label{eq-V_1}
\vol(\Omega_j(\rho)) \geq  &  \vol( \overline{B^b(\rho)}) -  \vol( U_j),
\end{align}
and $\vol(U_j) \to 0$  by Theorem \ref{RPI} and Proposition \ref{prop-IsoIneq}. So $\vol(\Omega_j(\rho))  \to   \vol( \overline{B^b(\rho)})$.
\end{proof}

\subsection{GH convergence of annular regions and IF convergence of boundaries}\label{ssec-GHannular}

In this section we prove two results needed in the proof of Theorem \ref{thm-MainB}.  
In Theorem \ref{thm-MainB} we have a sequence of points $p_j \in \Sigma_j(\r_0)$ and have to prove intrinsic flat convergence of the balls 
$(\overline{B^{M_j}(p_j,R)}, d_{M_j}, [[\overline{B^{M_j}(p_j,R)}]])$ to a ball in hyperbolic space. 
By Theorem \ref{thm-MainO} we will have $(\Omega_j(\widetilde R), \di_{\Omega_j(\widetilde R)},[[\Omega_j(\widetilde R)]])$ converging in intrinsic flat sense to  $(\overline{B^b(\widetilde R)},  d_{\Hb^n},  [[ \overline{B^b(\widetilde R)}]])$. 
To ensure that up to a subsequence $p_j  \to p_\infty$  for some $p_\infty \in  \overline{B^b(\widetilde R)}$, the main ingredient will be
Corollary \ref{lem-Phi} where one obtains
Gromov-Hausdorff and intrinsic flat convergence of the annular regions $A(\r,\r'):=\overline {\Omega(\rho')} \setminus \Omega(\rho)$ 
to $ \overline{B^b(\r')}  \setminus B^b(\r)$. 
Then to ensure that $\overline{B^b(p_\infty, R)}  \subset \overline{B^b(\widetilde R)}$ we will have at our disposal Lemma \ref{lem-inner-gone}
where we show that the sequence of inner boundaries 
$(\bdry M_j,  \di_{\Omega_{j}(\r)},  [[ \bdry M_j]])$ converge in intrinsic flat sense to the zero $(n-1)$-dimensional integral current space.

\bigskip

\begin{lemma}
Assume $M \in \G_n(\rho_0, \gamma, D)$ such that $\Psi(M)=\graph(f)$, for $f \in \G_n$. Write $\Psi=(\Psi^1, \Psi^2):  M \to  \Hb^n \times \R$.
Then for any $\r  > \r_0/2$ there exists a constant $\Gamma=\Gamma(\r_0,\r,\g)>0$, such that 
\be\label{phi-j-r}
\Psi^1: (M \setminus \Omega(\r),d_{M}) \to ( \Hb^{n} \setminus B^b(\r),  d_{\Hb^{n}})
\ee  
is bilipschitz with $Lip(\Psi^1)=1$ and $Lip((\Psi^1)^{-1})=\Gamma$. 
Furthermore, for any $R \geq \r' > \r > \r_0/2$ 
\begin{equation*}
	\Psi^1: (A(\r, \r'), \di_{\Omega(R)}) \to  (\overline{B^b(\r')}  \setminus  B^b(\r) ,  d_{\Hb^{n}} ),
\end{equation*}
is bilipschitz with $\lip(\Psi^1)=1$ and $\lip((\Psi^1)^{-1})=\Gamma(\r_0,\r,\g)$. 
\end{lemma}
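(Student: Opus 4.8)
The plan is to deduce everything from the pointwise comparison of metrics on the graph. Identifying $M\setminus\bdry M$ with $\Hb^n\setminus\overline U$ through $\Psi^1=\Pi\circ\Psi$, the metric induced by $\Hb^{n+1}$ reads $g=b+V^2\,df\otimes df$, so by Cauchy--Schwarz applied to $df\otimes df$ one has $b\le g\le(1+V^2|\grad^b f|_b^2)\,b$ pointwise, and on $\{r\ge\r_0/2\}$ the right-hand factor is at most $1+\g^2$ by \eqref{eq-Vgrad}. Since $U\subset B^b(\r_0/2)$ and $\r>\r_0/2$, the region $M\setminus\Omega(\r)$ (respectively $A(\r,\r')$) and the subsets of $\Hb^n$ over which it lies are contained in $\{r>\r_0/2\}$, so both inequalities are available throughout; in particular $\Psi^1$ restricts to a diffeomorphism of $M\setminus\Omega(\r)$ onto $\Hb^n\setminus\overline{B^b(\r)}$ and of $A(\r,\r')$ onto $\overline{B^b(\r')}\setminus B^b(\r)$.

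For $\lip(\Psi^1)\le1$, I would take $p,q$ in the source and a curve $\eta$ joining them realizing (up to $\varepsilon$) the relevant distance --- a curve in $M$ for $d_M$, or a curve in $\Omega(R)$ for $\di_{\Omega(R)}$. Its image $\Psi^1\circ\eta$ joins $\Psi^1(p)$ to $\Psi^1(q)$ in $\Hb^n$, and $b\le g$ gives $L_b(\Psi^1\circ\eta)\le L_g(\eta)$; taking the infimum over $\eta$ yields $d_{\Hb^n}(\Psi^1(p),\Psi^1(q))\le d_M(p,q)$ (respectively $\le\di_{\Omega(R)}(p,q)$). One may assume $\eta$ meets $\bdry M$ at most at its endpoints, which is harmless since $p,q\notin\bdry M\subset\Omega(\r)$. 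Conversely $\lip(\Psi^1)=1$, not merely $\le1$, because at each point $V^2\,df\otimes df$ degenerates along the hyperplane $\ker df$, so the differential of $\Psi^1$ has operator norm exactly $1$ and the ratio of distances tends to $1$ along a suitable direction.

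The real content is the reverse bound $\lip\big((\Psi^1)^{-1}\big)\le\Gamma$. For $x,y$ in the target I need a connecting path that (i) lies over $\{r\ge\r_0/2\}$, so that $g\le(1+\g^2)b$ may be used on its lift, (ii) in the annular case stays inside $\overline{B^b(\r')}\setminus B^b(\r)$, and (iii) has $b$-length at most $C(\r)\,d_{\Hb^n}(x,y)$ for an explicit constant $C(\r)$; lifting it by $(\Psi^1)^{-1}$ then produces a curve in the source of $g$-length at most $\sqrt{1+\g^2}\,C(\r)\,d_{\Hb^n}(x,y)$, so $\Gamma:=\sqrt{1+\g^2}\,C(\r)$ works (the claimed dependence on $\r_0$ being only through the constraint $\r>\r_0/2$). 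To build such a path I would start from the minimizing $\Hb^n$-geodesic $c$ from $x$ to $y$; since geodesic balls in $\Hb^n$ are convex, $c\subset\overline{B^b(\r')}$ automatically and $c\cap\overline{B^b(\r)}$ is a single sub-arc with endpoints $x',y'$ on $\partial B^b(\r)$. Replacing that sub-arc by a minimizing arc of the round sphere $\big(\partial B^b(\r),\sinh^2(\r)\,\s\big)$ from $x'$ to $y'$ keeps the path in $\{r\ge\r\}$ and in $\overline{B^b(\r')}$, and changes its length by $\sinh(\r)\,\theta-d_{\Hb^n}(x',y')$ with $\theta=\angle(x',y')\in(0,\pi]$. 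Because $d_{\Hb^n}(x,y)\ge d_{\Hb^n}(x',y')$, it then suffices to bound $\sinh(\r)\,\theta/d_{\Hb^n}(x',y')$; by the hyperbolic law of cosines $d_{\Hb^n}(x',y')=\mathrm{arccosh}\big(1+\sinh^2(\r)(1-\cos\theta)\big)$, and an elementary estimate (the ratio tends to $1$ as $\theta\to0$ and is continuous on $(0,\pi]$) yields a finite $C(\r)$.

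The only step I expect to require genuine care is this hyperbolic detour estimate together with its bookkeeping: the spherical arc lies on $\partial B^b(\r)$ rather than strictly outside it, which I would fix by performing the detour at radius $\r+\varepsilon$ and letting $\varepsilon\to0$, using continuity of $C(\cdot)$; and one must check the detour stays inside the outer ball $\overline{B^b(\r')}$, which is exactly where convexity of geodesic balls in $\Hb^n$ enters. Everything else reduces to the two-sided comparison $b\le g\le(1+\g^2)b$ and projecting or lifting curves.
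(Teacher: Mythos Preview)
Your argument is correct, but it takes a somewhat different route from the paper. The paper exploits the buffer zone $\{\r_0/2\le r\le\r\}$: when the geodesic $c$ enters $B^b(\r)$ at $c(t_1)$ and exits at $c(t_2)$, one checks whether $d_{\Hb^n}(c(t_1),c(t_2))\le\ell_*:=\r-\tfrac{\r_0}{2}$. If so, the sub-arc $c|_{[t_1,t_2]}$ itself already avoids $B^b(\r_0/2)$ (by the triangle inequality, any point on it within distance $\ell_*$ of $c(t_1)\in\partial B^b(\r)$ cannot lie in $B^b(\r_0/2)$), so the gradient bound \eqref{eq-Vgrad} applies along the original geodesic with no detour needed. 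Only when $d_{\Hb^n}(c(t_1),c(t_2))>\ell_*$ is the spherical arc used, and then the crude upper bound $\pi\sinh(\r)$ for its length suffices because the chord is already at least $\ell_*$. This yields the explicit constant $\Gamma=\max\bigl(1,\tfrac{\pi\sinh(\r)}{\r-\r_0/2}\bigr)\sqrt{1+\g^2}$ without any hyperbolic trigonometry.

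Your approach instead always detours along $\partial B^b(\r)$ and controls the arc-to-chord ratio $\sinh(\r)\,\theta/d_{\Hb^n}(x',y')$ uniformly in $\theta\in(0,\pi]$ via the hyperbolic law of cosines. This is a bit more analytic but has the advantage that your $C(\r)$ does not depend on $\r_0$ at all, whereas the paper's constant blows up as $\r\downarrow\r_0/2$. One small remark: your worry that the spherical arc lies on $\partial B^b(\r)$ rather than strictly outside is unnecessary. For the first assertion you are bounding $d_M$, so the lifted curve is allowed to enter $\Omega(\r)$; for the second you are bounding $\di_{\Omega(R)}$, and $\partial B^b(\r)\subset\overline{B^b(R)}\setminus U$ lifts into $\Omega(R)$ since $U\subset B^b(\r_0/2)\subset B^b(\r)$. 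No $\varepsilon$-perturbation is needed.
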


\begin{proof}
	Recall that $\Psi:M   \to \Hb^{n+1} \setminus U$ is a smooth Riemannian isometric embedding and that 
	$\Psi(M)= \graph(f)$. Hence, $\Psi^1: (M, d_M) \to (\Hb^{n} \setminus U, d_{\Hb^n})$ is bijective and 1-Lipschitz. To simplify notation, denote the inverse of  $\Psi^1|_{M \setminus \Omega(\r)}:  M \setminus \Omega(\r) \to \Hb^{n} \setminus B^b(\r)$  by $\Phi$.
	We have to show that $\Phi$ is $\Gamma$-Lipschitz.

	Let  $x,x' \in  \Hb^n \setminus  B^b(\r)$ be two points. If the unit-speed $b$-geodesic $c(t):[0, d_{\Hb^{n}} (x,x')] \to \Hb^n$ that connects them lies in $\Hb^n \setminus  B^b(\r)$, then by the Cauchy-Schwartz inequality and \eqref{eq-Vgrad},
	\begin{align}\label{eq-estPhiM}
		d_{M}(\Phi(x), \Phi(x'))  \leq & \int_0^{ d_{\Hb^{n}} (x,x') } \sqrt{ b(c'(t), c'(t)) +  V^2  b^2(c'(t),  \grad f)}  \\
		\leq & d_{\Hb^n}(x,x') \sqrt{1+\g^2} 
		 \nonumber
	\end{align}

	If $c$ passes through $B^b(\r)$ then there exist intervals $[0,t_1],[t_1,t_2],[t_2, d_{\Hb^{n}} (x,x')]$, see Remark \ref{rmrk-totallyGeodBalls},  such that 
	$c([0,t_1]), c( [t_2, d_{\Hb^{n}} (x,x')]  )  \subset  \Hb^n  \setminus B^b(\r)$ and $c([t_1,t_2]) \subset \overline{B^b(\r)}$.  Note that estimate (\ref{eq-estPhiM}) holds for $d_M(\Phi(x),  \Phi(c(t_1)))$ and  $d_M( \Phi(c(t_2)),\Phi(x'))$.

	For the remaining part, note that $d_{\Hb^n}(p, \pr B^b(\frac{\r_0}{2}))= \rho-\frac{\rho_0}{2}$ (as the radial curve provides a minimizing geodesic) for any $p\in \pr B^b(\r) $, so if $d_{\Hb^n}(c(t_1),c(t_2)) \leq \rho-\frac{\rho_0}{2}=:\l_*$, then
	the unit-speed $b$-geodesic from $c(t_1)$ to $c(t_2)$  lies in $\overline{B^b(\rho)}\setminus B^b(\rho_0/2)$.
	Hence, (\ref{eq-estPhiM}) holds for $d_M(\Phi(c(t_1)), \Phi(c(t_2))$.  
If $d_{\Hb^n}(c(t_1), c(t_2)) \geq \l_*$, observe that the pullback metric of $b$ under the inclusion map, $\pr B^b(\r)  \to \Hb^{n}$,  equals the standard round metric in a sphere of radius $\sinh(\r)$. Thus, the diameter of $\pr B^b(\rho) \subset \Hb^n$ with respect to its induced Riemannian metric equals $\pi \sinh(\rho)$ and so $ c(t_1)$ and $ c(t_2)$ can be connected by a curve $\sigma$ in $\pr B^b(\rho)$ of length bounded above by $\pi \sinh(\rho)$. We have
	\begin{align*}
d_M(\Phi(c(t_1)),\Phi(c(t_2) ))\leq & \sqrt{1+\gamma^2} L_{\Hb^n}(\sigma)\leq \frac{\sqrt{1+\gamma^2} \pi \sinh(\rho)}{\l_*} \l_* \\
\leq & \frac{\sqrt{1+\gamma^2} \pi \sinh(\rho)}{\l_*} d_{\Hb^n}(c(t_1), c(t_2)).
\end{align*}

	From the triangle inequality and using that $c$ is a minimizing $b$-geodesic from $x$ to $x'$, we conclude that 
	\begin{equation*}
		d_M(\Phi(x),\Phi(x')) \leq   \max(1, \tfrac{\pi \sinh(\rho)}{\r-\frac{\r_0}{2}})   \sqrt{1+\g^2} d_{\Hb^n}(x,x').
	\end{equation*}
	Then define $\Gamma(\r_0,\r,\g): = \max(1, \tfrac{\pi \sinh(\rho)}{\r-\frac{\r_0}{2}})   \sqrt{1+\g^2}$.
	
	Similarly $\Psi^1 : (\Omega(R), \di_{\Omega(R)}) \to  (\overline{B^b(R)}  \setminus U ,  d_{\Hb^n})$ is $1$-Lipschitz and bijective. 
	Proceeding as in the previous case and taking into account that $\overline{B^b(R)} \subset \Hb^n$ is a totally geodesic submanifold  (see Remark \ref{rmrk-totallyGeodBalls}), the inverse of $\Psi^1$  restricted to the annular region $A(\r, \r')$ 
 is $\Gamma(\r_0/2,\r,\g)$-Lipschitz. 
\end{proof}

By applying  Theorem \ref{thm-HLS:GH=IF} to the sequence $A_j(\r,\r')$ we get the following.

\begin{coro}\label{lem-Phi}
Let $M_j \in \G_n(\rho_0, \gamma, D)$ be a sequence. Then for any $R \geq \r' \geq \r> \r_0/2$ and with the same notation as in the previous lemma, we have
\bee
(A_j(\r, \r'), \di_{\Omega_j(R)}) \GHto (A^b(\r,\r'), d_\infty)
\eee
and
\bee
(A_j(\r, \r'), \di_{\Omega_j(R)},  [[  A_j(\r, \r') ]] ) \IFto (A^b(\r,\r'), d_\infty , T_\infty),
\eee
where $A^b(\r,\r') =\overline{B^b(\r')}  \setminus  B^b(\r)$,  $d_\infty:  A^b(\r,\r') \times A^b(\r,\r') \to \R$ is a distance function that 
satisfies $d_{\Hb^n}(y,y') \leq d_\infty(y,y') \leq \Gamma(\r_0, \r, \gamma)  d_{\Hb^n}(y,y')$ for any $y, y' \in A^b(\r,\r')$, 
$\iota_\infty: (A^b(\r,\r'), d_{\Hb^n})  \to (  A^b(\r,\r'), d_\infty)$  is the identity function and $T_\infty= \iota_{\infty \sharp}  [[A^b(\r, \r')]]$.
\end{coro}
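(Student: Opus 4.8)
The plan is to transport the annular regions onto a single fixed set by means of the graph projections $\Psi^1_j$ and then to quote Theorem~\ref{thm-HLS:GH=IF} verbatim. So first I would fix $R\geq\r'>\r>\r_0/2$ as in the statement and take $Y:=A^b(\r,\r')=\overline{B^b(\r')}\setminus B^b(\r)\subset\Hb^n$, equipped with the restricted metric $d_{\Hb^n}$ and the weight-one integral current $[[A^b(\r,\r')]]$ of Example~\ref{ex-submanifoldCurrent}, so that $(Y,d_{\Hb^n},[[A^b(\r,\r')]])$ is an $n$-dimensional integral current space. By the previous lemma the graph projection
\[
\Psi^1_j\colon(A_j(\r,\r'),\di_{\Omega_j(R)})\longrightarrow(Y,d_{\Hb^n})
\]
is, for every $j$, a bijection with $\lip(\Psi^1_j)=1$ and $\lip((\Psi^1_j)^{-1})=\Gamma:=\Gamma(\r_0,\r,\g)$, a constant independent of $\r'$ and $R$.

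Next I would use these bijections to push the metrics $\di_{\Omega_j(R)}$ onto $Y$, defining $d_j(y,y'):=\di_{\Omega_j(R)}\big((\Psi^1_j)^{-1}(y),(\Psi^1_j)^{-1}(y')\big)$, so that $\Psi^1_j$ becomes tautologically an isometry of $(A_j(\r,\r'),\di_{\Omega_j(R)})$ onto $(Y,d_j)$. The $1$-Lipschitz bound on $\Psi^1_j$ yields $d_{\Hb^n}(y,y')\leq d_j(y,y')$, and the $\Gamma$-Lipschitz bound on $(\Psi^1_j)^{-1}$ yields $d_j(y,y')\leq\Gamma\,d_{\Hb^n}(y,y')$, so each $d_j$ is a metric on $Y$ satisfying \eqref{d_j} with $\lambda=1$ and $\lambda'=\Gamma$. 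Since moreover $\Psi^1_j$ is an orientation-preserving bi-Lipschitz homeomorphism which restricts to a diffeomorphism off a set of measure zero, it pushes $[[A_j(\r,\r')]]$ forward to $\iota_{j\sharp}[[A^b(\r,\r')]]=:T_j$, where $\iota_j\colon(Y,d_{\Hb^n})\to(Y,d_j)$ is the identity; hence $(A_j(\r,\r'),\di_{\Omega_j(R)},[[A_j(\r,\r')]])$ and $(Y,d_j,T_j)$ are current-preserving isometric.

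With this in place I would apply Theorem~\ref{thm-HLS:GH=IF} to $(Y,d_{\Hb^n},[[A^b(\r,\r')]])$ and the metrics $d_j$: after passing to a subsequence this produces a metric $d_\infty$ on $Y$ again satisfying \eqref{d_j} with $\lambda=1$, $\lambda'=\Gamma$, i.e. $d_{\Hb^n}(y,y')\leq d_\infty(y,y')\leq\Gamma(\r_0,\r,\g)\,d_{\Hb^n}(y,y')$, such that $d_j\to d_\infty$ uniformly, $(Y,d_j)\GHto(Y,d_\infty)$ and $(Y,d_j,T_j)\IFto(Y,d_\infty,T_\infty)$, where $T_\infty=\iota_{\infty\sharp}[[A^b(\r,\r')]]$ for $\iota_\infty\colon(Y,d_{\Hb^n})\to(Y,d_\infty)$ the identity. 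Transporting these two convergences back along the isometries $\Psi^1_j$ then gives exactly the asserted Gromov--Hausdorff and intrinsic flat convergences, with $d_\infty$ and $T_\infty$ of the claimed form.

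I do not expect a genuine obstacle: all the analytic content sits in the preceding lemma, which supplies the bi-Lipschitz bounds with a constant $\Gamma(\r_0,\r,\g)$ uniform in $\r'$ and $R$, and the rest is the bookkeeping just described. The two points I would be careful to state explicitly are that, exactly as in Theorem~\ref{thm-HLS:GH=IF}, both conclusions hold only after passing to a subsequence, and that the identification $(\Psi^1_j)_\sharp[[A_j(\r,\r')]]=T_j$ (and not $-T_j$) relies on fixing the orientation of $A_j(\r,\r')$, inherited from $M_j\subset\Hb^{n+1}$, compatibly with the standard orientation of $A^b(\r,\r')\subset\Hb^n$ under the projection $\Psi^1_j$.
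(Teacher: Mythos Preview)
Your proposal is correct and follows exactly the paper's approach: the paper simply states that the corollary follows ``by applying Theorem~\ref{thm-HLS:GH=IF} to the sequence $A_j(\r,\r')$,'' and you have spelled out precisely what that application entails (transporting the metrics via the bilipschitz projections $\Psi^1_j$ from the preceding lemma and invoking the uniform bilipschitz bound $\Gamma(\r_0,\r,\g)$). Your remarks about the subsequence and the orientation convention are accurate refinements of points the paper leaves implicit.
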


\bigskip

Now we show that the inner boundaries converge to the zero integral current space and that the
outer boundaries converge to the boundary of the limit space.

\begin{lemma}\label{lem-inner-gone}
Let $\{M_j\} \subset \mathcal G_n(\rho_0,\gamma, D)$ be a sequence such that $\lim_{j \to \infty}\m(f_j)=0$ and $\r \geq \r_0$.  %\rp{i am changing here the distances...}
Then there exist a subsequence $\{(\Omega_{j_k}(\r), \di_{\Omega_{j_k}(\r)}, [[\Omega_{j_k}(\r)]] ) \}$ and an integral current space $(\Omega_\infty^\r, d_\infty^\r, T_\infty^\r)$ such that
\bee
(\Omega_{j_k}(\rho), \di_{\Omega_{j_k}(\r)},[[\Omega_{j_k}(\rho)]])   \IFto   (\Omega_\infty^\r, d_\infty^\r , T_\infty^\r).
\eee
With no loss of generality assume that (this is true up to a sign) for all $j \in \mathbb N$,
$$\partial [[\Omega_{j}(\r)]]= [[ \Sigma_j(\r)]] - [[ \bdry M_j]] \in I_{n-1}( \Omega_j(\r), \di_{\Omega_{j}(\r)})$$
with  $[[ \Sigma_j(\r)]], [[ \bdry M_j]] \in I_{n-1}( \Omega_j(\r), \di_{\Omega_{j}(\r)})$ and where $\Sigma_j(\rho)=\partial\Omega_j(\r)\smallsetminus \partial M_j$.  Then we have
\bee
(\partial M_{j_k},   \di_{\Omega_{j_k}(\r)},  [[\partial M_{j_k}]] ) \IFto  {\bf{0}}
\eee
and
\bee
(\Sigma_{j_k}(\rho), \di_{\Omega_{j_k}(\r)},  [[\Sigma_{j_k}(\rho)]] ) \IFto  \partial (\Omega_\infty^\r, d_\infty^\r , T_\infty^\r).
\eee
\end{lemma}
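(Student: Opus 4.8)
# Proof Proposal

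The plan is to combine Wenger's compactness theorem with the Riemannian Penrose-type inequality to squeeze the inner boundaries to zero, and then use Lemma~\ref{lem-decompositionofcurr} to identify the limit of the outer boundaries.

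First I would obtain the subsequential limit $(\Omega_\infty^\r, d_\infty^\r, T_\infty^\r)$ by invoking Theorem~\ref{thm-Wenger-compactness}: by Theorem~\ref{thm-IFbounds} the integral current spaces $(\Omega_j(\r), \di_{\Omega_j(\r)}, [[\Omega_j(\r)]])$ have a uniform diameter bound $D_0$, a uniform upper volume bound (from Corollary~\ref{cor-limsupvol}, since $\m(f_j)\to 0$ forces $\m(M_j)<1$ eventually), and a uniform bound $C_0$ on $\vol(\partial\Omega_j(\r))$. Hence Wenger's theorem yields the asserted subsequence and limit. Along this subsequence, passing to a further subsequence, Theorem~\ref{converge}(1) gives that the boundaries $(\partial\Omega_{j_k}(\r), \di_{\Omega_{j_k}(\r)}, \partial[[\Omega_{j_k}(\r)]])$ converge in intrinsic flat sense to $\partial(\Omega_\infty^\r, d_\infty^\r, T_\infty^\r)$.

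Next I would show $(\partial M_{j_k}, \di_{\Omega_{j_k}(\r)}, [[\partial M_{j_k}]]) \IFto \mathbf{0}$. By Theorem~\ref{converge}(3) it suffices to show $\mass([[\partial M_j]]) = \vol(\partial M_j) \to 0$; but this is immediate from Theorem~\ref{RPI} (with $V\geq 1$): $\vol(\partial U_j)\leq 2\w_{n-1}\m(f_j)\to 0$, and the mass measure of the integral current space $(\partial M_j, \di_{\Omega_j(\r)}, [[\partial M_j]])$ is $\dvol_{g_j}$ restricted to $\partial M_j$, whose total is $\vol(\partial M_j)$ — independent of which distance ($d_{M_j}$ or $\di_{\Omega_j(\r)}$) we equip the underlying set with, since the volume form only depends on the Riemannian metric. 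However, for Theorem~\ref{converge}(3) one needs the spaces to actually be integral current spaces converging in intrinsic flat sense to \emph{some} limit; so I would first extract (Wenger again, or simply note that a sequence of mass-zero-approaching integral current spaces with uniformly bounded diameter and boundary mass has a convergent subsequence, whose limit must be $\mathbf 0$ by lower semicontinuity of mass) the convergence and then identify the limit as $\mathbf 0$. The needed uniform bounds on $\diam(\partial M_j)$ and $\mass(\partial[[\partial M_j]])$ hold because $\partial M_j \subset \Omega_j(\r)$ has intrinsic diameter $\leq D_0$ and $\partial[[\partial M_j]] = 0$ (it is a closed submanifold), so in fact $[[\partial M_j]]$ is a cycle and Wenger's theorem applies with $A=0$.

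Finally, for the outer boundaries: write $\partial[[\Omega_j(\r)]] = [[\Sigma_j(\r)]] - [[\partial M_j]]$ as $R^1_j + R^2_j$ with $R^1_j = [[\Sigma_j(\r)]]$ and $R^2_j = -[[\partial M_j]]$. We have just shown $(\set(R^2_j), \di_{\Omega_{j_k}(\r)}, R^2_j) \IFto \mathbf 0$, and the sequence $\partial[[\Omega_{j_k}(\r)]]$ converges in intrinsic flat sense to $\partial(\Omega_\infty^\r, d_\infty^\r, T_\infty^\r)$ by the previous paragraph. Applying Lemma~\ref{lem-decompositionofcurr} (with ambient dimension $n-1$) to the sequence $(\partial\Omega_{j_k}(\r), \di_{\Omega_{j_k}(\r)}, \partial[[\Omega_{j_k}(\r)]])$, decomposed as $R^1_j + R^2_j$, yields $(\set(R^1_{j_k}), \di_{\Omega_{j_k}(\r)}, R^1_{j_k}) = (\Sigma_{j_k}(\r), \di_{\Omega_{j_k}(\r)}, [[\Sigma_{j_k}(\r)]]) \IFto \partial(\Omega_\infty^\r, d_\infty^\r, T_\infty^\r)$, as desired.

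The main obstacle I anticipate is purely bookkeeping rather than conceptual: one must be careful that all currents $[[\Sigma_j(\r)]]$, $[[\partial M_j]]$, $\partial[[\Omega_j(\r)]]$ are regarded as elements of $I_{n-1}(\Omega_j(\r), \di_{\Omega_j(\r)})$ — i.e. with the \emph{restricted intrinsic} distance of $\Omega_j(\r)$, not $d_{M_j}$ — so that the hypotheses of Lemma~\ref{lem-decompositionofcurr} are literally met, and that passing to the successive subsequences (one for Wenger on $\Omega_j$, one for Theorem~\ref{converge}(1), one for the $\partial M_j$) is consolidated into a single subsequence $j_k$. One also should double-check that $\set(\partial[[\Omega_{j_k}(\r)]]) = \Sigma_{j_k}(\r) \cup \partial M_{j_k}$ (up to the null set where the density vanishes) so that the integral current space structures match up cleanly; this follows from Example~\ref{ex-manifold-bdryCurrent}(ii) applied to $\Omega_j(\r)$ viewed as a manifold with corners along $\partial\Sigma_j(\r) = \partial(\partial M_j)$, which is empty here since both $\partial M_j$ and $\Sigma_j(\r)$ are closed submanifolds.
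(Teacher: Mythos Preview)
Your proposal is correct and follows essentially the same route as the paper: Wenger's compactness via Theorem~\ref{thm-IFbounds} for the subsequential limit, Theorem~\ref{converge} for convergence of boundaries, Theorem~\ref{RPI} to force $\vol(\partial M_j)\to 0$, and Lemma~\ref{lem-decompositionofcurr} to conclude. The only difference is that you invoke Wenger a second time on $(\partial M_j,\di_{\Omega_j(\r)},[[\partial M_j]])$ before applying Theorem~\ref{converge}(3), whereas the paper uses directly that $d_{\mathcal F}\big((X,d,T),\mathbf 0\big)\le \mass(T)$ (take $U=T$, $V=0$), so $\mass([[\partial M_j]])\to 0$ already gives $[[\partial M_j]]\IFto \mathbf 0$ without an auxiliary compactness step.
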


\begin{proof}
By Wenger's compactness theorem, Theorem \ref{thm-Wenger-compactness},  and  Theorem \ref{thm-IFbounds}
 there exist a subsequence $\{ (\Omega_{j_k}(\r), \di_{\Omega_{j_k}(\r)}, [[\Omega_{j_k}(\r)]]) \}$ and an integral current space $(\Omega_\infty^\r, d_\infty^\r, T_\infty^\r)$ such that
$(\Omega_{j_k}(\rho), \di_{\Omega_{j_k}(\r)}, [[\Omega_{j_k}(\rho)]]) $ converges in intrinsic flat sense to $(\Omega_\infty^\r, d_\infty^\r , T_\infty^\r)$.  Then 
by Theorem \ref{converge} we get
\bee
\partial (\Omega_{j_k}(\rho), \di_{\Omega_{j_k}(\r)} ,[[\Omega_{j_k}(\rho)]])   \IFto  \partial (\Omega_\infty^\r, d_\infty^\r , T_\infty^\r).
\eee
Since $\mass ([[  \partial M_j ]])= \vol(\partial M_j)$ and by the Riemannian Penrose like inequality for asymptotically hyperbolic graphs,
Theorem \ref{RPI},  we have $\vol(\partial M_j) \le  2\w_{n-1}\m(f_j) \to 0$.  It follows by Theorem \ref{converge} that $(\partial M_{j_k},   \di_{\Omega_{j_k}(\r)},  [[\partial M_{j_k}]] ) \IFto  {\bf{0}}$. Then by Lemma \ref{lem-decompositionofcurr} we conclude the proof of the lemma.
\end{proof}

%%%%%%%%%%%%%%%%%%%%%%%%%%%%%%%%%%%%%%%%%%%%%%
%%%%%%%%%%%%%%%%%%%%%%%%%%%%%%%%%%%%%%%%%%%%%%
%%%%%%%%%%%%%%%%%%%%%%%%%%%%%%%%%%%%%%%%%%%%%%

\section{Proofs of the main theorems}\label{s-Proofs}

Here we prove Theorem \ref{thm-MainO} and Theorem \ref{thm-MainB}.
A key tool to prove the former is Theorem \ref{convBdry}.  Unlike  Wenger's compactness theorem, Theorem \ref{thm-Wenger-compactness}, Theorem \ref{convBdry}
tell us which space is the limit.  Theorem  \ref{convBdry} was applied in {\cite{A-P, HLP}} 
to fill in a gap in the proof of the stability of the positive mass theorem for asymptotically flat graphical manifolds
under intrinsic flat distance \cite{HLS, HLS-Corr}.

\begin{thm}[{\cite[Theorem 4.2, c.f. Proof of Theorem 1.1 in \cite{A-P}]{A-P}}]\label{convBdry}
Let $D, A >0$ and $\lambda_j \in (0,1]$ be a sequence such that $\liminf_{j\to \infty}{\lambda_j}=1$. 
Let $(\Omega_j,g_j)$ be a sequence of $n$-dimensional compact oriented Riemannian manifolds with non-empty boundary, $j \in \mathbb N \cup \{\infty\}$,  with $g_j$ continuous 
for $j \in \mathbb N$ and  $g_\infty$ smooth, so that any  $g_\infty$-minimizing geodesic between two points in the interior of $\Omega_\infty$
lies completely in the interior.   Assume that for all $j \in \mathbb N$,
\bee
\diam(\Omega_j) \le D, \quad \vol(\partial \Omega_j) \leq A,
\eee
 that
\bee
\vol(\Omega_j) \to \vol(\Omega_\infty)  \quad \text{as  } j \to \infty,
\eee
and that there exist $C^1$  diffeomorphisms $\Phi_j: \Omega_\infty  \to \Omega_j$  so that  for all $v \in T\Omega_\infty$
\bee 
\lambda_j  g_\infty(v,v) \le g_j(d\Phi_jv, d\Phi_jv).
\eee 
Then $(\Omega_j,d_{\Omega_j},[[\Omega_j]])$ converges in intrinsic flat sense to $(\Omega_\infty,d_{\Omega_\infty}, [[\Omega_\infty]])$.
\end{thm}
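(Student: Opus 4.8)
The plan is to reduce to a fixed manifold and then run the ``volume above, distance below'' scheme of \cite{A-P}. Using the diffeomorphisms $\Phi_j$, transport the data onto $N := \Omega_\infty$: set $h_j := \Phi_j^* g_j$, a continuous metric on $N$ with $h_j \geq \lambda_j g_\infty$ pointwise as bilinear forms, $\vol(N, h_j) \to \vol(N, g_\infty)$, $\diam(N, h_j) \le D$ and $\vol(\partial N, h_j) \le A$; since $\lambda_j \in (0,1]$ with $\liminf \lambda_j = 1$ we actually have $\lambda_j \to 1$. It suffices to show $(N, d_{h_j}, [[N]]) \IFto (N, d_{g_\infty}, [[N]])$, since the $\Phi_j$ are current-preserving isometries onto the $\Omega_j$ and the limit integral current space is $(\Omega_\infty, \di_{\Omega_\infty}, [[\Omega_\infty]])$ as in Example \ref{ex-manifold-bdryCurrent}; this last identification is where one uses that $g_\infty$-minimizing geodesics between interior points stay interior, so that the relevant length metric really is $\di_{\Omega_\infty}$ and not a strictly smaller pseudo-metric.

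Two structural facts drive the argument. First, $h_j \geq \lambda_j g_\infty$ gives the non-contraction bound $d_{h_j} \geq \sqrt{\lambda_j}\, d_{g_\infty}$ on $N\times N$, so no collapsing occurs and any subsequential limit distance dominates $d_{g_\infty}$. Second, writing $u_j := \sqrt{\det(g_\infty^{-1} h_j)}$ for the density ratio, one has $u_j \ge \lambda_j^{n/2}$ pointwise and $\int_N u_j \, \dvol_{g_\infty} = \vol(N, h_j) \to \vol(N, g_\infty) = \int_N 1 \, \dvol_{g_\infty}$; as the $u_j$ are bounded below by a sequence converging to $1$ in $L^1$ while their integrals converge to $\int_N 1$, it follows that $u_j \to 1$ in $L^1(N, \dvol_{g_\infty})$. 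Picking $\delta_j \downarrow 0$ slowly, the ``bad set'' $W_j := \{u_j > 1+\delta_j\}$ then satisfies $\vol_{g_\infty}(W_j)\to 0$ and $\vol_{h_j}(W_j) = \int_{W_j} u_j \to 0$, while on $N \setminus W_j$ the pointwise inequality $\det(g_\infty^{-1}h_j) \ge \lambda_j^{\,n-1}\cdot(\text{largest eigenvalue of } g_\infty^{-1}h_j)$ forces $\lambda_j\, g_\infty \le h_j \le (1+\delta_j)^2 \lambda_j^{-(n-1)}\, g_\infty$; thus $h_j$ and $g_\infty$ are uniformly bi-Lipschitz off $W_j$ with constants tending to $1$.

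With this, I would construct for each $j$ a complete metric space $Z_j$ --- concretely $N \sqcup N$ with a metric restricting to $d_{h_j}$ and $d_{g_\infty}$ on the two copies and placing the two copies of a point $x$ at distance at most a ``height function'' $\eta_j(x)$ which is of size $O(\delta_j)$ on the bulk of $N\setminus W_j$ and is $\le 2D$ everywhere --- using the non-contraction bound to verify one half of the triangle inequality and the diameter bound to let $\eta_j$ absorb the other half near $W_j$, as in \cite{A-P}. One then writes the difference of the two pushed-forward currents as $U_j + \partial V_j$ in $Z_j$ with $V_j$ the interpolating ``cylinder'' $(n{+}1)$-current: its mass splits as a thin cylinder over $N\setminus W_j$ (mass $\lesssim \delta_j \vol(N,g_\infty)\to 0$) plus a cylinder of height $\le 2D$ over $W_j$ (mass $\le 2D\,\vol_{h_j}(W_j)\to 0$), while $U_j$ is the analogous cylinder over $\partial N$, of mass $O(\delta_j)$ over the good part of $\partial N$ plus at most $2D$ times the $(n{-}1)$-area of the bad part of $\partial N$, the latter vanishing because $\vol(\partial N, h_j)\le A$ together with the density control forces the bad boundary portion to have area $\to 0$. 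Hence $d_{\mathcal F}\big((N,d_{h_j},[[N]]),(N,d_{g_\infty},[[N]])\big) \le d_F^{Z_j} \to 0$.

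I expect the main obstacle to be making this gluing rigorous: arranging that both embeddings into $Z_j$ are genuinely distance-preserving --- not merely bi-Lipschitz --- even though a $d_{g_\infty}$- or $d_{h_j}$-minimizing geodesic between two points of $N\setminus W_j$ may pass through $W_j$. The device is to let $\eta_j$ grow near $W_j$ and charge the resulting discrepancies to the (vanishing) cylinder mass over $W_j$; verifying the triangle inequalities and the mass estimates for this construction, and the separate point that the ``bad part'' of $\partial N$ has area $\to 0$ --- which is precisely where the hypothesis $\vol(\partial\Omega_j)\le A$ enters --- form the technical heart of the proof.
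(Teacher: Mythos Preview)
The paper does not prove this theorem: it is quoted as \cite[Theorem 4.2]{A-P} and used as a black box in the proof of Theorem~\ref{thm-MainO}. So there is no ``paper's own proof'' to compare against.

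That said, your sketch is a faithful outline of the Allen--Perales argument: pull back to the fixed model $N=\Omega_\infty$, use the one-sided metric bound $h_j\ge\lambda_j g_\infty$ together with volume convergence to force the density $u_j\to 1$ in $L^1$, isolate a small bad set $W_j$, and build the common metric space $Z_j$ by hand to estimate the flat distance. The point you flag as the main obstacle---arranging genuinely isometric embeddings into $Z_j$ when minimizing geodesics may enter $W_j$---is exactly the technical core of \cite{A-P}, and the interior-geodesic hypothesis on $g_\infty$ is used precisely where you indicate. One small correction: from $\lambda_j\in(0,1]$ and $\liminf\lambda_j=1$ you cannot conclude $\lambda_j\to 1$, only that a subsequence does; but since intrinsic flat convergence of the full sequence follows once every subsequence has a further subsequence converging to the same limit, this is harmless.
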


Let us recall our first main theorem, Theorem \ref{thm-MainO}.

\begin{thm}
%Let $n\ge3$, $r_0, \gamma, D>0$, and $r\ge r_0$. 
Let $M_j\in\G_n(\r_0,\gamma, D)$ be a sequence of asymptotically hyperbolic graphs 
with $\Psi_j:  M_j  \to  \Hb^{n+1}$ a smooth Riemannian isometric embedding as in Definition  \ref{def-Gn2}.
If  $\lim_{j \to \infty}\m(M_j)=0$,   then for any $\r>\r_0$ we have
\bee
\lim_{j \to \infty} d_{\mathcal{F}}   ( (\Omega_{j}(\r), \di_{\Omega_j(\r)}, [[\Omega_{j}(\r)]]),  (  \ovr{B^b(\r)}  , d_{\Hb^n},  [[  \ovr{B^b(\r)} ]]  ))= 0
\eee
and $\vol(\Omega_j(\r)) \to \vol(B^b(\r))$.
\end{thm}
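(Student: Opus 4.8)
The plan is to apply the compactness/identification result Theorem \ref{convBdry} to the sequence $(\Omega_j(\r), g_j|_{\Omega_j(\r)})$ with limit candidate $(\overline{B^b(\r)}, b)$, in the entire case, and to first enlarge the $\Omega_j(\r)$'s via Theorem \ref{thm-NewOmega} in the non-entire case. The volume convergence $\vol(\Omega_j(\r)) \to \vol(B^b(\r))$ is already furnished by Theorem \ref{thm-IFbounds}, so the crux is to verify the remaining hypotheses of Theorem \ref{convBdry}: a uniform diameter bound, a uniform area bound for the boundaries, and the existence of $C^1$ diffeomorphisms $\Phi_j \colon \overline{B^b(\r)} \to \Omega_j(\r)$ satisfying $\lambda_j\, b(v,v) \le g_j(d\Phi_j v, d\Phi_j v)$ with $\liminf_j \lambda_j = 1$, together with the geodesic-convexity property of the limit (which holds here by Remark \ref{rmrk-totallyGeodBalls}, since $\overline{B^b(\r)}$ is totally geodesic in $\Hb^n$). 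The diameter and area bounds are exactly the content of \eqref{eq-diamBd} in Theorem \ref{thm-IFbounds}.

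First I would treat the entire case. Here $\Psi_j^1 \colon \Omega_j(\r) \to \overline{B^b(\r)}$ is a bijection (indeed the coordinate chart $\Pi_j$ of \eqref{chart}), so one can take $\Phi_j \definedas (\Psi_j^1)^{-1}$, which is a diffeomorphism from $\overline{B^b(\r)}$ onto $\Omega_j(\r)$. Since $\Psi_j$ is a Riemannian isometric embedding into $\Hb^{n+1}$ and $g_j = b + V^2\, df_j \otimes df_j$ in these coordinates, one has $g_j(d\Phi_j v, d\Phi_j v) = b(v,v) + V^2 (df_j(v))^2 \ge b(v,v)$ pointwise, so the lower bound holds with $\lambda_j = 1$ for all $j$; in particular $\liminf_j \lambda_j = 1$. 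Combined with \eqref{eq-diamBd} and the volume convergence from Theorem \ref{thm-IFbounds}, all hypotheses of Theorem \ref{convBdry} are met (with $g_\infty = b$ smooth and $g_j = g_j|_{\Omega_j(\r)}$ continuous — indeed smooth), and we conclude
\bee
(\Omega_j(\r), d_{\Omega_j(\r)}, [[\Omega_j(\r)]]) \IFto (\overline{B^b(\r)}, d_{\Hb^n}, [[\overline{B^b(\r)}]]).
\eee
One subtlety: Theorem \ref{convBdry} produces the limit with the \emph{restricted} distance $d_{\Omega_j(\r)} = \di_{\Omega_j(\r)}$ on $\Omega_j(\r)$, which is precisely the structure claimed in the statement, so no further identification is needed. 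The volume statement $\vol(\Omega_j(\r)) \to \vol(B^b(\r))$ is restated directly from Theorem \ref{thm-IFbounds}.

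For the non-entire case, the obstruction — and the main difficulty of the whole argument — is that $\Omega_j(\r)$ has an inner (minimal) boundary $\partial M_j$, so it is \emph{not} diffeomorphic to $\overline{B^b(\r)}$ and the direct construction above fails; one must instead "cap off" the hole. Here I would invoke Theorem \ref{thm-NewOmega}: using condition (6) of Definition \ref{def-Gn2} — the controlled blow-up $\nabla^b_\nu f_j \to \infty$ with $\nabla^b_X f_j$ bounded in tangential directions, together with mean convexity and star-shapedness of $\partial U_j$ — one enlarges each $\Omega_j(\r)$ to a slightly larger compact manifold $\widetilde\Omega_j(\r)$ that \emph{is} diffeomorphic to $\overline{B^b(\r)}$, and one carefully constructs the diffeomorphisms $\widetilde\Phi_j \colon \overline{B^b(\r)} \to \widetilde\Omega_j(\r)$ with the required metric lower bound $\lambda_j\, b \le \widetilde\Phi_j^* \widetilde g_j$ with $\lambda_j \to 1$; the enlargement is chosen so that the added collar has vanishing volume and boundary area in the limit (using $\vol(\partial M_j) \le 2\omega_{n-1}\m(f_j) \to 0$ from Theorem \ref{RPI} and the vanishing of $\vol(U_j)$), preserving the diameter and volume bounds. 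Applying Theorem \ref{convBdry} to $(\widetilde\Omega_j(\r), \widetilde g_j)$ gives $\IFto$ convergence of the enlargements to $(\overline{B^b(\r)}, d_{\Hb^n}, [[\overline{B^b(\r)}]])$, and since $d_{\mathcal F}$ between $(\Omega_j(\r), \di_{\Omega_j(\r)}, [[\Omega_j(\r)]])$ and its enlargement is controlled by $\mass([[\widetilde\Omega_j(\r) \setminus \Omega_j(\r)]]) + \mass(\partial[[\widetilde\Omega_j(\r)\setminus\Omega_j(\r)]]) \to 0$ (taking the obvious cone/filling between them), the triangle inequality for $d_{\mathcal F}$ yields the claimed convergence for the original $\Omega_j(\r)$. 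The volume convergence is again immediate from Theorem \ref{thm-IFbounds}.
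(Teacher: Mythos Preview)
Your proposal is correct and follows essentially the same route as the paper: apply Theorem~\ref{convBdry} directly in the entire case (with $\Phi_j=(\Psi_j^1)^{-1}$ and $\lambda_j=1$), and in the non-entire case first cap off via Theorem~\ref{thm-NewOmega}, apply Theorem~\ref{convBdry} to the enlargements $\widetilde\Omega_j$, and conclude by the triangle inequality.

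One point worth tightening: in the non-entire step you bound $d_{\mathcal F}$ between $(\Omega_j(\r),\di_{\Omega_j(\r)},[[\Omega_j(\r)]])$ and $(\widetilde\Omega_j,d_{\widetilde\Omega_j},[[\widetilde\Omega_j]])$ by a mass-plus-boundary-mass expression and speak of a ``cone/filling''. The paper's argument is cleaner and uses a specific feature of Theorem~\ref{thm-NewOmega} that you do not invoke explicitly: for $L$ chosen large enough ($L>D+\tfrac{1}{2}\sinh(\rho_0)\pi\sqrt{1+\gamma^2}$), the inclusion $(\Omega_j(\r),\di_{\Omega_j(\r)})\hookrightarrow(\widetilde\Omega_j,d_{\widetilde\Omega_j})$ is \emph{distance preserving}. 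This lets one work inside $\widetilde\Omega_j$ itself as the common metric space, so that
\[
d_{\mathcal F}\big((\Omega_j(\r),\di_{\Omega_j(\r)},[[\Omega_j(\r)]]),(\widetilde\Omega_j,d_{\widetilde\Omega_j},[[\widetilde\Omega_j]])\big)\le d_F^{\widetilde\Omega_j}([[\Omega_j(\r)]],[[\widetilde\Omega_j]])\le \vol(\widetilde\Omega_j\setminus\Omega_j(\r))\to 0,
\]
taking $U=[[\widetilde\Omega_j\setminus\Omega_j(\r)]]$ and $V=0$; no boundary term or auxiliary filling is needed. Without this metric-isometric embedding you would still need to produce a common ambient space, so this is the ingredient that makes your triangle-inequality step go through.
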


In the non-entire case we prove Theorem \ref{thm-MainO} by enlarging each $\Omega_j(\r)$, as described in Appendix \ref{sec-appendix},
to get manifolds diffeormorphic to $\ovr{B^b(\r)}$ 
and  then apply Theorem \ref{convBdry} to the new sequence.  The uniform diameter, volume and area 
bounds  needed follow by the corresponding uniform diameter and area bounds shown in  Theorem \ref{thm-IFbounds}
and the way the enlargements and diffeomorphisms are chosen.

\begin{proof}[Proof of Theorem \ref{thm-MainO}]
We first assume that all the manifolds $M_j$ have non-empty boundary. 
Let $\lambda_j \in (0,1]$ be a sequence such that $\liminf_{j\to \infty}{\lambda_j}=1$ and fix some  $L> D+\tfrac{1}{2}\sinh(\rho_0)\,\pi \sqrt{1+\gamma^2}$. 
We replace each $(\Omega_j(\r), g_j)$ by a manifold  $(\widetilde\Omega_j, \tilde g_j)$ diffeomorphic to  $\overline{B^b(\r)}$ by applying Theorem \ref{thm-NewOmega} for $\lambda=\lambda_j$. To obtain the conclusion of the first part of the theorem, by the triangle inequality, it is enough to prove that 
 \bee
  \lim_{j \to \infty} d_{\mathcal{F}}((\Omega_{j}(\r), \di_{\Omega_j(\r)}, [[\Omega_{j}(\r)]]),  (\widetilde\Omega_j, d_{\widetilde\Omega_j}, [[\widetilde\Omega_j]]) ) = 0
 \eee
 and 
  \bee
  \lim_{j \to \infty}d_{\mathcal{F}}(  (\widetilde\Omega_j, d_{\widetilde\Omega_j}, [[\widetilde\Omega_j]]) , (  \ovr{B^b(\r)}  , d_{\Hb^n},  [[  \ovr{B^b(\r)} ]]  )) = 0.
  \eee

We get the first limit as follows.  By the definition of intrinsic flat distance and given that by Theorem \ref{thm-NewOmega} $(\Omega_j(\r),\di_{\Omega_j(\r)}) $ embeds in a distance preserving way into $(\widetilde\Omega_j,d_{\widetilde\Omega_j})$, we get
 \bee 
 d_{\mathcal{F}}((\Omega_{j}(\r), \di_{\Omega_j(\r)}, [[\Omega_{j}(\r)]]),  (\widetilde\Omega_j, d_{\widetilde\Omega_j}, [[\widetilde\Omega_j]]) ) \leq  d_{F}^{ \widetilde\Omega_j}([[\Omega_{j}(\r)]], [[\widetilde\Omega_j]]) \leq \vol(\widetilde\Omega_j \setminus \Omega_j(\r)).
 \eee
By  Theorem \ref{thm-NewOmega}, Theorem \ref{RPI} and our hypothesis on the masses we get $ \vol(\widetilde\Omega_j \setminus \Omega_j(\r)) \leq \widetilde V(L, \r_0, \partial U_j) \to 0$. Thus, we get the first limit.

\bigskip 
The second limit is proven by ensuring that we can apply Theorem~\ref{convBdry} to $\{(\widetilde\Omega_j,  d_{\widetilde\Omega_j},  [[ \widetilde\Omega_j]])\}_{j \in \mathbb N}$, 
where the limit space has to be the hyperbolic ball $(  \overline{B^b(\r)} , d_{\Hb^n}, [[  \overline{B^b(\r)}  ]])$.  In Theorem \ref{thm-IFbounds} it was shown that 
for all $j \in \mathbb N$
\begin{align*}
\diam((\Omega_j(\r),\di_{\Omega_j(\r)})) \leq  D_0(\r_0, \gamma, D, \r), \quad  \vol(\partial \Omega_j(\r))  \leq C_0(\gamma, \r),
\end{align*}
and $\lim_{j \to \infty} \vol(\Omega_j(\r)) = \vol(  \overline{B^b(\r)})$.

Recall that $(\Omega_j(\r),\di_{\Omega_j(\r)})$ distance preserving embeds into $(\widetilde \Omega_j, d_{\widetilde\Omega_j})$. Thus, a uniform diameter bound for the $\widetilde \Omega_j$'s 
is 
$$\diam( (\Omega_j(\r),\di_{\Omega_j(\r)}) )  +  \diam ((\widetilde \Omega_j \setminus \Omega_j(\r),d_{\widetilde\Omega_j}) ) \leq D_0(\r_0, \gamma, D, \r)  +  \widetilde D(L, \r_0),$$
where $\widetilde D(L, \r_0)$ is the upper diameter bound provided in Theorem \ref{thm-NewOmega}. 
Now, $\partial \widetilde \Omega_j =   \partial  \Omega_j(\r)  \setminus \bdry M_j$ so $\vol(\partial \widetilde \Omega_j) \leq C_0(\gamma, \r)$.
Since we saw above that $\vol( \widetilde \Omega_j  \setminus \Omega_j(\r)) \leq \widetilde V(L, \r_0, \partial U_j) \to 0$, we get
\begin{align*}
\lim_{j\to \infty}\vol( \widetilde \Omega_j) & = 
\lim_{j \to \infty} \big( \vol(\Omega_j(\r))  + \vol( \widetilde \Omega_j  \setminus \Omega_j(\r)) \big) =  \vol(\overline{B^b(\r)}). 
\end{align*}
Since $\liminf_{j\to \infty}{\lambda_j}=1$ and $\widetilde g_j  \geq \lambda_j b$ by Theorem \ref{thm-NewOmega}, and by Remark
\ref{rmrk-totallyGeodBalls} the ball $\overline{B^b(\r)}$ is totally convex, we can apply Theorem \ref{convBdry} to conclude that the second limit holds. Putting both limits together we get the conclusion,
\bee
\lim_{j \to 0} d_{\mathcal{F}}   ( ( \Omega_{j}(\r), \di_{\Omega_j(\r)}, [[\Omega_{j}(\r)]]),  (  \overline{B^b(\r_0)}, d_{\Hb^n},  [[ \overline{B^b(\r_0)} ]]) )  = 0.
\eee

If the $M_j$ are the graphs over entire functions, then there is no need to enlarge the manifolds. The result follows  immediately from Theorem \ref{thm-IFbounds}  and Theorem~\ref{convBdry}.   If the sequence $M_j$ contains subsequences of both entire and non-entire manifolds the result follows from the previous cases. 
\end{proof}

Next we will rewrite Theorem \ref{thm-MainB} and give its proof. However, before we do this, let us first establish the following lemmas.

\begin{lemma}\label{theorem:point-convergenceH}
Let $Q_j=(X_j, d_j, T_j)$ be a sequence of  $n$-dimensional integral current spaces, $j\in \mathbb{N}\cup\{\infty\}$, 
so that $Q_j \IFto Q_\infty$.
%and let 
 Suppose that  $y_j \in X_j$ and $Y_j\subset X_j$ are compact sets, $j \in \mathbb N$,  such that  
\begin{itemize}
\item $(Y_j, d_j) \GHto (Y,d)$ for some (non-empty) compact metric space $(Y,d)$
\item  $(Y_j, d_j, T_j \rstr Y_j)$ are $n$-dimensional integral current spaces that converge
to the integral current space $N=(Y,d,S)$, for some $S \in I_n(Y)$
\item there exists $r >0$ such that $\overline{B^{X_j}(y_j, r)}$ is contained  in $Y_j$ for all large $j$.
 \end{itemize}
Then for any complete metric space $Z$ and isometric embeddings $\varphi_j: X_j \to Z$ that satisfy 
\bee
Q_j   \xrightarrow[Z, \varphi_j]{\>\mathcal{F}\>} Q_\infty,
\eee
we can ensure that there exist a  subsequence $y_{j_k}$ and $y_\infty\in \overline{X}_\infty$ such that
\bee\label{eq-NewLimitPointH}
(Q_{j_k}, y_{j_k})   \xrightarrow[Z, \varphi_{j_k}]{\>\mathcal{F}\>}   (Q_\infty, y_\infty) .
\eee
\end{lemma}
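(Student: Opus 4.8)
The plan is to reduce Lemma~\ref{theorem:point-convergenceH} to Theorem~\ref{theorem:point-convergence} of \cite{HLP} by carefully matching up the hypotheses. The only discrepancy between the two statements is that in Theorem~\ref{theorem:point-convergence} one merely assumes abstract intrinsic flat convergence $(Q_j\rstr Y_j,y_j)\IFto(N_\infty,y)$ (so the embeddings $\psi_j$ into some space $W$ come for free from the definition), whereas here we are \emph{given} in addition that $(Y_j,d_j)\GHto(Y,d)$ and that the limit current space is $N=(Y,d,S)$ with the \emph{same} underlying metric space; moreover we want to land in a \emph{prescribed} space $Z$ with prescribed embeddings $\varphi_j$ realizing $Q_j\IFZto Q_\infty$. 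So the work is purely bookkeeping about which metric space and which embeddings get used at each stage.

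First I would observe that $Q_j\rstr Y_j=(\set(T_j\rstr Y_j),d_j,T_j\rstr Y_j)=(Y_j,d_j,T_j\rstr Y_j)$ since the $Y_j$ are assumed to be the (compact) integral current spaces $(Y_j,d_j,T_j\rstr Y_j)$, and by hypothesis these converge in intrinsic flat sense to $N=(Y,d,S)$. Simultaneously $(Y_j,d_j)\GHto(Y,d)$, and the GH-limit metric space is literally the same metric space underlying $N$. Hence Theorem~\ref{prop:GHIF} applies to the sequence $(Y_j,d_j,T_j\rstr Y_j)$: there exist a complete separable metric space $W$ and isometric embeddings $\psi_j:Y_j\to W$, $j\in\mathbb N\cup\{\infty\}$, with $(Y_j,d_j)\xrightarrow[W,\psi_j]{\mathrm{GH}}(Y,d)$ and $(Y_j,d_j,T_j\rstr Y_j)\xrightarrow[W,\psi_j]{\mathcal F}(Y,d,S)$. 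In particular, as noted in the remark following Theorem~\ref{prop:GHIF}, for the points $y_j\in Y_j$ one can extract a subsequence and find $y\in Y$ with $((Y_j,d_j,T_j\rstr Y_j),y_j)\xrightarrow[W,\psi_j]{\mathcal F}((Y,d,S),y)$; here is where I use that GH-convergence of compact sets guarantees the point does not disappear. So all hypotheses of Theorem~\ref{theorem:point-convergence} are met (with $N_\infty=N$, and $y\in Y=\overline{\set(N)}$ after possibly discarding the part of $S$ of density zero — in any case $y$ lies in the closure of $\set(N)$, which is all that is needed): there exists a further subsequence $y_{j_k}$ and $y_\infty\in\overline X_\infty$ with $(Q_{j_k},y_{j_k})\IFto(Q_\infty,y_\infty)$ for \emph{some} admissible embeddings.

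The remaining point — and the one small obstacle — is to upgrade ``for some embeddings'' to ``for the given embeddings $\varphi_j$ into the given $Z$.'' This is exactly the content of the passage in \cite{Sormani-AA} underlying Lemma~\ref{lem-AASorh} and of the proof of Theorem~\ref{theorem:point-convergence} itself: once one knows a sequence $(Q_{j_k},y_{j_k})$ converges to $(Q_\infty,y_\infty)$ with non-disappearing limit point, the statement ``$y_{j_k}\to y_\infty$'' is independent of the choice of $Z$ and $\varphi_j$ realizing $Q_j\IFto Q_\infty$, because given \emph{any} two such families of embeddings one can build a common target space (a quotient of the disjoint union $Z\sqcup Z'$ glued along the images of the $X_j$, using that the embeddings restricted to $X_j$ agree up to isometry) in which both the flat convergence and the convergence of the marked points persist; tracking $y_\infty$ through this identification places it in $\overline X_\infty\subset Z$. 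Concretely, I would argue: take the $Z,\varphi_j$ from our hypothesis and the $Z',\varphi'_j$ produced by Theorem~\ref{theorem:point-convergence}; form the pushouts and invoke uniqueness of intrinsic flat limits up to current-preserving isometry to transport $y_\infty$ from $\overline X_\infty\subset Z'$ to a point of $\overline X_\infty\subset Z$ with $d_Z(\varphi_{j_k}(y_{j_k}),y_\infty)\to 0$. This gives \eqref{eq-NewLimitPointH} and completes the proof.

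I expect the only genuinely delicate step to be this last ``independence of the realizing space'' argument; the first two steps are a direct application of Theorem~\ref{prop:GHIF} and Theorem~\ref{theorem:point-convergence}. In writing it up I would either spell out the pushout construction or — more efficiently — remark that it is already implicit in \cite{HLP} and \cite{Sormani-AA} that the notion of a sequence of points converging in an $\mathcal F$-converging sequence is realization-independent once the limit point is non-disappearing, and cite that. The value of stating the lemma as we do is precisely to record the clean criterion ``GH $+$ $\mathcal F$ convergence of subsets $Y_j$ with a uniform interior ball around $y_j$ $\Rightarrow$ $y_j$ subconverges in the ambient $\mathcal F$-limit with the given embeddings,'' which is what Theorem~\ref{thm-MainB} will invoke with $Y_j$ the annular regions $A_j(\r,\r')$ from Corollary~\ref{lem-Phi}.
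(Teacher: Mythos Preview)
Your proof is correct and follows essentially the same route as the paper: apply Theorem~\ref{prop:GHIF} to get common embeddings $\psi_j:Y_j\to W$ realizing both GH and $\mathcal F$ convergence, extract a convergent subsequence $y_j\to y\in Y$ from the Hausdorff convergence, then invoke Theorem~\ref{theorem:point-convergence}. The entire last paragraph of your proposal is unnecessary, however: reread the conclusion of Theorem~\ref{theorem:point-convergence} --- it already delivers $(Q_{j_k},y_{j_k})\xrightarrow[Z,\varphi_{j_k}]{\mathcal F}(Q_\infty,y_\infty)$ with the \emph{same} $Z$ and $\varphi_j$ that appear in its hypothesis, so no ``realization-independence'' or pushout argument is needed.
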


\begin{proof}
By Theorem~\ref{prop:GHIF} we have 
        \begin{align*}   
	(Y_j, d_j)  \xrightarrow[W, \psi_j]{\>\mathrm{GH}\>} (Y,d) \quad  \text{and}\quad
	 (Y_j, d_j, T_j \rstr Y_j) \xrightarrow[W, \psi_j]{\>\mathcal{F}\>}  (Y,d,S),
	\end{align*}
	where $W$ is a complete and separable metric space and  $\psi_j$ are isometric embeddings.  
	Since  $\psi_\infty(Y)$ is a compact subset of a complete space, $W$, and 
	\bee
	\psi_j(Y_j)  \xrightarrow[W]{\>\mathrm{H}\>}  \psi_\infty(Y),
	\eee
	there exists a subsequence of $\psi_j(y_j)$  that converges to $\psi_\infty(y)$ for some $y \in Y$. 
	Therefore
	\bee
	 ((Y_j, d_j, T_j \rstr Y_j), y_j) \xrightarrow[W, \psi_j]{\>\mathcal{F}\>} ( (Y,d,S), y).
	\eee
	Then by Theorem \ref{theorem:point-convergence} there exists $y_\infty \in \overline X_\infty$ and a subsequence such that 
	\bee
(Q_{j_k}, y_{j_k})   \xrightarrow[Z, \varphi_{j_k}]{\>\mathcal{F}\>}   (Q_\infty, y_\infty) .
\eee
\end{proof}

The next lemma compares $d_{M}$ and $\di_{\Omega(\r)}$ on relevant regions of $M$ for large $\r$. This is important as it will allow us to use Theorem \ref{thm-MainO} in the proof of Theorem \ref{thm-MainB}.

\begin{lemma}\label{lem-distequiv}
	Let $(M,g)\in\G_n(\r_0,\gamma, D)$ and fix $\bar{R}>1+(4+\pi \sinh(\r_0+1))\sqrt{1+\g^2}$. Then
	\begin{enumerate}
		\item $\di_{\Omega(\r_0+\bar{R})}=d_{M}$ on $\overline{B^{M} (p, R)}$ for all $p\in \Sigma(\r_0)$ and $R<\frac{\bar{R}}{3}$,
		\item $\di_{\Omega(\r_0+\bar{R})}=d_{M}$ on $A(\r_0-1,\r_0+1)$.
	\end{enumerate}
\end{lemma}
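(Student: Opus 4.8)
The plan is to use that $d_M\le\di_{\Omega(\r_0+\bar R)}$ always holds (a curve lying in $\Omega(\r_0+\bar R)$ is a curve in $M$ of the same $g$-length), so on the two sets in the statement only the reverse inequality needs proof; I would obtain it by showing that a curve in $M$ almost realizing $d_M(x,y)$ cannot leave $\Omega(\r_0+\bar R)$, hence is an admissible competitor for $\di_{\Omega(\r_0+\bar R)}(x,y)$. The tool is a radial a priori estimate. In the chart $\Pi$ the metric $g$ of $\graph(f)\cong(M,g)$ reads $b+V^2df\otimes df\ge b$, so $\Psi^1\colon(M,d_M)\to(\Hb^n,d_{\Hb^n})$ is $1$-Lipschitz and $L_b(\Psi^1\circ\sigma)\le L_g(\sigma)$ for every curve $\sigma$ in $M$; since $r=d_{\Hb^n}(\,\cdot\,,0)$ is $1$-Lipschitz, if the endpoints of $\sigma$ have $r\circ\Psi^1\le\rho_1$ while $\sigma$ exits $\Omega(\r_0+\bar R)$ (i.e.\ attains $r\circ\Psi^1>\r_0+\bar R$ somewhere), then $\sigma$ must climb from radius $\le\rho_1$ past $\r_0+\bar R$ and come back, so
\bee
L_g(\sigma)\ \ge\ 2\big(\r_0+\bar R-\rho_1\big).
\eee
The same $1$-Lipschitz remark, with $r(\Psi^1(p))=\r_0$ for $p\in\Sigma(\r_0)$, shows $\overline{B^M(p,R)}\subset\{r\circ\Psi^1\le\r_0+R\}\subset\Omega(\r_0+\bar R)$ (using $R<\bar R/3$) and $A(\r_0-1,\r_0+1)\subset\overline{\Omega(\r_0+1)}\subset\Omega(\r_0+\bar R)$, so the intrinsic distances in the statement make sense.

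For (1), note $d_M(x,y)\le 2R$ whenever $x,y\in\overline{B^M(p,R)}$. Fix $\veps\in(0,2\bar R/3)$ and a curve $\sigma$ in $M$ from $x$ to $y$ with $L_g(\sigma)<d_M(x,y)+\veps\le 2R+\veps$. By the displayed estimate with $\rho_1=\r_0+R$, a $\sigma$ leaving $\Omega(\r_0+\bar R)$ would force $2(\bar R-R)\le 2R+\veps$, i.e.\ $\bar R\le 2R+\veps/2<2\bar R/3+\veps/2$, which is false. Hence $\sigma\subset\Omega(\r_0+\bar R)$, so $\di_{\Omega(\r_0+\bar R)}(x,y)\le L_g(\sigma)<d_M(x,y)+\veps$; letting $\veps\to0$ gives $\di_{\Omega(\r_0+\bar R)}(x,y)\le d_M(x,y)$, hence equality.

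For (2), the same scheme works once $\diam_{d_M}A(\r_0-1,\r_0+1)$ is bounded by a quantity strictly below $2(\bar R-1)$ and not involving $D$. Given $x,y\in A(\r_0-1,\r_0+1)$, I would join $\Psi^1(x)$ to $\Psi^1(y)$ in $\Hb^n$ by moving radially out to $\partial B^b(\r_0+1)$, then along a minimizing arc of that sphere (a round sphere of radius $\sinh(\r_0+1)$ and intrinsic diameter $\pi\sinh(\r_0+1)$), then radially back in; this $b$-curve has length $\le 4+\pi\sinh(\r_0+1)$ and stays in the shell $\{\r_0-1\le r\le\r_0+1\}$, which lies in $\{r\ge\r_0/2\}$ provided $\r_0\ge 2$, so \eqref{eq-Vgrad} and hence $g\le(1+\gamma^2)b$ are available there. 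Lifting this curve to $\graph(f)\cong M$ as in \eqref{eq-estPhiM} produces a curve joining $x$ to $y$ of $g$-length $\le\delta:=(4+\pi\sinh(\r_0+1))\sqrt{1+\gamma^2}$, so $d_M(x,y)\le\delta$. Choosing $\veps\in(0,\delta)$ and $\sigma$ with $L_g(\sigma)<d_M(x,y)+\veps\le\delta+\veps$, the displayed estimate with $\rho_1=\r_0+1$ shows that a $\sigma$ exiting $\Omega(\r_0+\bar R)$ would give $2(\bar R-1)\le\delta+\veps$, i.e.\ $\bar R\le 1+\delta/2+\veps/2$, contradicting $\bar R>1+\delta$. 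Hence $\sigma\subset\Omega(\r_0+\bar R)$, and letting $\veps\to0$ proves (2).

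I expect the genuine obstacle to be precisely that $D$-free diameter bound for $A(\r_0-1,\r_0+1)$: neither the depth hypothesis nor Theorem \ref{thm-IFbounds} may be used (their constants depend on $D$), so the comparison curve has to be confined to the outer shell $\{\r_0-1\le r\le\r_0+1\}$, where only \eqref{eq-Vgrad} is needed, staying clear of the uncontrolled region of radius $<\r_0/2$ near $\partial U$ (where $|\grad^b f|$ blows up). This is exactly why the shell must sit in $\{r\ge\r_0/2\}$ and why the hypothesis takes the form $\bar R>1+(4+\pi\sinh(\r_0+1))\sqrt{1+\gamma^2}$, i.e.\ $\bar R>1+\delta$ (the case $\r_0<2$ requiring only a cosmetic shrinking of the width).
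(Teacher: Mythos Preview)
Your proof is correct and follows essentially the same approach as the paper: both arguments use that $\Psi^1$ is $1$-Lipschitz to convert $g$-length into radial $b$-distance, bound $d_M$ on the relevant set (by $2R$ in (1) and by $\delta=(4+\pi\sinh(\r_0+1))\sqrt{1+\gamma^2}$ via the radial--arc--radial construction in (2)), and conclude that almost-minimizing curves cannot reach $r=\r_0+\bar R$. The only cosmetic differences are that the paper extends the curve back to $p$ (yielding the bound $3R$ on one side) rather than using your ``out and back'' factor of $2$, and works directly with minimizing curves rather than your $\veps$-approximation; both variants give the result under $R<\bar R/3$ and $\bar R>1+\delta$.
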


\begin{proof}
To show $\di_{\Omega(\r_0+\bar{R})}= d_{M}$ on the desired subsets we need to show that for two points $x,y\in \overline{B^{M} (p, R)}$ (resp.\ $x,y\in A(\r_0-1,\r_0+1)$) any curve of length $d_M(x,y)$ from $x$ to $y$ in $M$ will remain in $\Omega(\r_0+\bar{R})$. This is satisfied if any curve of length bounded by $\diam((\overline{B^{M} (p, R)},d_M))\leq 2R$ (resp.\ $\diam((A(\r_0-1,\r_0+1),d_M))$) starting at point $x\in \overline{B^{M} (p, R)}$ (resp.\ $x\in A(\r_0-1,\r_0+1)$) remains in $\Omega(\r_0+\bar{R})$.

We first consider the case $x\in \overline{B^{M} (p, R)}$. Let $c$ be a curve in $M$ starting at $x$ of $g$-length $L\leq 2R$. We may extend $c$ to a curve $\tilde{c}$ in $M$ starting at $p\in \Sigma(\r_0)$ of length $L\leq 3R$. Since $M$ is a graph over $\Hb^n$, the $b$-length of the projection of $\tilde{c}$ onto $\Hb^n$ is similarly bounded by $3R$ and this projection starts at $\pr B^b(\r_0)$. So by the triangle inequality the projection must remain in $\overline{B^b(\r_0+3R)}\subset \overline{B^b(\r_0+\bar{R})}$. Hence $\tilde{c}$ itself remains in $\Omega(\r_0+\bar{R})$.

The case $x\in A(\r_0-1,\r_0+1)$ goes similarly: First note that $\diam((A(\r_0-1,\r_0+1),d_M))$ is bounded from above by $4 \sqrt{1+\g^2}+ \pi \sinh(\r_0+1)\sqrt{1+\g^2}$ (this follows as always by considering curves  along the graph of $f$ between any two points in $A(\r_0-1,\r_0+1)$ which first move radially outward, then follow $\Sigma(\r_0+1)$ and then move radially inward). Then we see, analogously to the above, that any curve in $M$ starting at $x$ of $g$-length $L\leq 4 \sqrt{1+\g^2}+ \pi \sinh(\r_0+1)\sqrt{1+\g^2}$ must remain in $ \Omega((\r_0+1)+4 \sqrt{1+\g^2}+ \pi \sinh(\r_0+1)\sqrt{1+\g^2})\subset \Omega(\r_0+\bar{R})$.
\end{proof}

Now we are ready to rewrite Theorem \ref{thm-MainB} and give its proof.  The proof consists in applying Theorem \ref{thm-MainO} and the results of Sections \ref{section:point-convergence} and \ref{ssec-GHannular} in combination with Lemma \ref{lem-distequiv}. 

\begin{thm}
Let $M_j\in\G_n(\r_0,\gamma, D)$ be a sequence of asymptotically hyperbolic graph manifolds with $\lim_{j \to \infty}\m(M_j)=0$
and $p_j \in \Sigma_j(\r_0)$  be a sequence of points.  Then  for almost every $R>0$ we have
\bee
\lim_{j \to \infty} d_{\mathcal{F}}   (   (\overline{B^{M_{j}} (p_{j}, R)} , d_{M_j},  [[ \overline{B^{M_{j}} (p_{j},R) }]])  ,   
( \ovr{B^b(R)} , d_{\Hb^n},  [[ \ovr{B^b(R)} ]] )  )  = 0
\eee
and $\vol(B^{M_j}(p_j,R)) \to \vol(B^b(R))$.
\end{thm}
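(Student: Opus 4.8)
The plan is to derive Theorem \ref{thm-MainB} from its non-pointed counterpart Theorem \ref{thm-MainO}, the point- and ball-convergence results of Section \ref{section:point-convergence}, and the distance comparisons in Corollary \ref{lem-Phi} and Lemma \ref{lem-distequiv}. Fix $\bar R>1+(4+\pi\sinh(\r_0+1))\sqrt{1+\g^2}$ as in Lemma \ref{lem-distequiv} and set $\r:=\r_0+\bar R$. By Theorem \ref{thm-MainO},
\bee
Q_j:=(\Omega_j(\r),\di_{\Omega_j(\r)},[[\Omega_j(\r)]])\ \IFto\ Q_\infty:=(\ovr{B^b(\r)},d_{\Hb^n},[[\ovr{B^b(\r)}]]),
\eee
and $\mass(Q_j)=\vol(\Omega_j(\r))\to\vol(\ovr{B^b(\r)})=\mass(Q_\infty)$. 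The points $p_j\in\Sigma_j(\r_0)$ lie in the thin annular region $Y_j:=A_j(\r_0-1,\r_0+1)$ (one uses a thinner annulus $A_j(\r_0-\delta,\r_0+\delta)$, $\delta<\r_0/2$, if $\r_0\le 2$), and Corollary \ref{lem-Phi} gives $(Y_j,\di_{\Omega_j(\r)})\GHto(A^b(\r_0-1,\r_0+1),d_\infty)$ together with $Q_j\rstr Y_j=(Y_j,\di_{\Omega_j(\r)},[[Y_j]])\IFto(A^b(\r_0-1,\r_0+1),d_\infty,T_\infty)$. Moreover, because each $M_j=\graph(f_j)$ with $g_j=b+V^2df_j\otimes df_j\ge b$, any curve in $M_j$ of $g_j$-length $<r<\tfrac12$ starting at $p_j$ projects onto a $b$-curve of length $<r$ issuing from $\pr B^b(\r_0)$ and hence stays over $\ovr{B^b(\r_0+r)}\setminus B^b(\r_0-r)$; thus $\ovr{B^{M_j}(p_j,r)}\subset Y_j$, and by Lemma \ref{lem-distequiv}(2) this ball coincides with $\ovr{B^{\Omega_j(\r)}(p_j,r)}$. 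Hence all hypotheses of Lemma \ref{theorem:point-convergenceH} hold, so after passing to a subsequence there is a point $p_\infty\in\ovr{B^b(\r)}$ with $(Q_j,p_j)\xrightarrow[Z,\varphi_j]{\>\mathcal{F}\>}(Q_\infty,p_\infty)$ for suitable $Z,\varphi_j$; in particular $p_j$ does not disappear.

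Next I would locate $p_\infty$. By Theorem \ref{RPI}, $\vol(\pr M_j)\le 2\w_{n-1}\m(f_j)\to 0$, so by Theorem \ref{converge} one has $(\pr M_j,\di_{\Omega_j(\r)},[[\pr M_j]])\IFto{\bf{0}}$ (cf. Lemma \ref{lem-inner-gone}); thus $\pr T_j=[[\Sigma_j(\r)]]-[[\pr M_j]]$ is a decomposition $R^1_j+R^2_j$ with $(\set(R^2_j),\di_{\Omega_j(\r)},R^2_j)\IFto{\bf{0}}$. Since any curve in $\Omega_j(\r)$ from $p_j\in\Sigma_j(\r_0)$ to $\Sigma_j(\r)=\Psi_j^{-1}(\pr B^b(\r)\times\R)$ projects to a $b$-curve joining $\pr B^b(\r_0)$ to $\pr B^b(\r)$, and $g_j\ge b$, we get $\di_{\Omega_j(\r)}(p_j,\set(R^1_j))\ge\bar R$. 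Theorem \ref{theorem:GH} then yields $d_{\Hb^n}(p_\infty,\set(\pr T_\infty))\ge\bar R$, i.e. $d_{\Hb^n}(0,p_\infty)\le\r-\bar R=\r_0$. Consequently, for every $r<\bar R$ the ball $B^{\Hb^n}(p_\infty,r)$ is already contained in $\ovr{B^b(\r)}$, which is totally convex in $\Hb^n$ by Remark \ref{rmrk-totallyGeodBalls}; therefore $S(p_\infty,r)=(\ovr{B^{\Hb^n}(p_\infty,r)},d_{\Hb^n},[[\ovr{B^{\Hb^n}(p_\infty,r)}]])$, which by homogeneity of $\Hb^n$ is identified, as an integral current space, with $(\ovr{B^b(r)},d_{\Hb^n},[[\ovr{B^b(r)}]])$ and has the same volume.

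With $p_\infty$ under control I would invoke Lemma \ref{lem-AASorh} and Remark \ref{rmrk-volConv} (using $\mass(Q_j)\to\mass(Q_\infty)$): along a further subsequence, for almost every $r>0$, $S(p_j,r)\xrightarrow[Z,\varphi_j]{\>\mathcal{F}\>}S(p_\infty,r)$ and $\mass(S(p_j,r))\to\mass(S(p_\infty,r))$. For $r<\bar R/3$, Lemma \ref{lem-distequiv}(1) identifies $S(p_j,r)$ with $(\ovr{B^{M_j}(p_j,r)},d_{M_j},[[\ovr{B^{M_j}(p_j,r)}]])$, so for almost every $r<\bar R/3$ we obtain, along the subsequence, intrinsic flat convergence of $(\ovr{B^{M_j}(p_j,r)},d_{M_j},[[\ovr{B^{M_j}(p_j,r)}]])$ to $(\ovr{B^b(r)},d_{\Hb^n},[[\ovr{B^b(r)}]])$ together with $\vol(B^{M_j}(p_j,r))\to\vol(B^b(r))$. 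Since this argument applies verbatim after passing to any subsequence of $\{p_j\}$, Theorem \ref{theorem:subsequences} --- applied with limiting Riemannian manifold $\Hb^n$ and limit point the origin (all candidate limit points $p_\infty$ produce the same ball up to a current-preserving isometry) --- upgrades both conclusions to the full sequence; and since $\bar R$ may be taken arbitrarily large, the statement holds for almost every $R>0$.

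The step I expect to be most delicate is a bookkeeping one: three distances are in play --- the restricted distance $d_{M_j}$, the intrinsic distances $\di_{\Omega_j(\cdot)}$ on the various truncations, and the limiting metric $d_\infty$ on the annulus --- and one must ensure they agree on precisely the regions where Lemma \ref{theorem:point-convergenceH}, Lemma \ref{lem-AASorh} and Theorem \ref{theorem:GH} are applied, which is the role of Lemma \ref{lem-distequiv} and Corollary \ref{lem-Phi}; getting the ranges of radii ($r<\tfrac12$, $r<\bar R/3$, $r<\bar R$) to line up requires care. Conceptually the heart of the matter is the localization of $p_\infty$ via Theorem \ref{theorem:GH}: this is what forces the limit of $\ovr{B^{M_j}(p_j,R)}$ to be an honest hyperbolic ball rather than one that feels the outer sphere $\Sigma_\infty(\r)$, and it crucially uses both the vanishing of the inner boundaries $\pr M_j$ and the pointwise inequality $g_j\ge b$ on the graphs.
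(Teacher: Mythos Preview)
Your proposal is correct and follows essentially the same route as the paper: apply Theorem \ref{thm-MainO} at a large radius $\r=\r_0+\bar R$, use Corollary \ref{lem-Phi} on an annulus together with Lemma \ref{theorem:point-convergenceH} to get a non-disappearing subsequential limit point $p_\infty$, locate $p_\infty$ via Lemma \ref{lem-inner-gone} and Theorem \ref{theorem:GH}, identify the balls $S(p_j,r)$ and $S(p_\infty,r)$ through Lemma \ref{lem-distequiv} and homogeneity of $\Hb^n$, and conclude via Lemma \ref{lem-AASorh}, Remark \ref{rmrk-volConv}, and Theorem \ref{theorem:subsequences}. The only cosmetic differences are that the paper simply assumes $\r_0>2$ (rather than shrinking the annulus), uses the containment $\overline{B^{\Omega_j(\r)}(p_j,1)}\subset\overline{B^{M_j}(p_j,1)}$ directly from $d_{M_j}\le \di_{\Omega_j(\r)}$ without invoking Lemma \ref{lem-distequiv}(2), and explicitly diagonalizes over $\bar R_i\to\infty$ \emph{before} appealing to Theorem \ref{theorem:subsequences}; your last paragraph merges these two steps, but the content is the same.
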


In the following we abuse notation and do not change indexes when passing to subsequences.

\begin{proof}[Proof of Theorem \ref{thm-MainB}]

Let $\bar{R}>1+(4+\pi \sinh(\r_0+1))\sqrt{1+\g^2}$.  Then by Theorem~\ref{thm-MainO} and Theorem \ref{thm-IFiffF} we know that 
\bee
Q_j:=(\Omega_j (\r_0+ \bar{R}), \di_{\Omega_j (\r_0+ \bar{R})}, [[\Omega_j (\r_0+ \bar{R})]])   \xrightarrow[Z, \varphi_j]{\>\mathcal{F}\>}    (\overline{B^b(\r_0+\bar{R}) }, d_{\Hb^n}, [[\overline{B^b(\r_0+\bar{R}) }]]),
\eee
for some metric space $Z$ and isometric embeddings $\varphi_j$. 
Assume with no loss of generality that $\r_0>2$, so $\r_0 - 1 > \r_0/2$, and $A_j(\r_0-1, \r_0+1) \subset \Omega_j(\r_0+ \bar{R})$ since $\bar R >1$.
Thus  we can apply Corollary~\ref{lem-Phi} to get that 
$$(A_j (\r_0-1, \r_0+1), \di_{\Omega_j (\r_0+ \bar{R})}, [[A_j (\r_0-1, \r_0+1)]])$$ 
converges in intrinsic flat sense to some integral current space
$$(\overline{B^b(\r_0 +1)} \setminus B^b(\r_0-1), d_\infty,  T_\infty)$$
and $(A_j (\r_0-1, \r_0+1), \di_{\Omega_j (\r_0+ \bar{R})})$ converges to $(\overline{B^b(\r_0 +1)} \setminus B^b(\r_0-1), d_\infty)$ in the Gromov-Hausdorff distance. 
Now, notice that  since $d_{M_j}\leq \di_{\Omega_j (\r_0+ \bar{R})}$, we always have $ \overline{B^{\Omega_j(\r_0+\bar R)}(p_j, 1)}\subset \overline{B^{M_j}(p_j, 1)} \subset A_j(\r_0-1, \r_0+1)$. 
So by
 Lemma~\ref{theorem:point-convergenceH} we obtain a subsequence of $p_j \in \Sigma_j(\r_0)$
	such that
	\be\label{eqn:converging-points}
(Q_j, p_j)
	\xrightarrow[Z, \varphi_j]{\>\mathcal{F}\>}   
	\left(    (\overline{B^b(\r_0+\bar{R}) }, d_{\Hb^n}, [[\overline{B^b(\r_0+\bar{R}) }]]), p_\infty\right).
	\ee 
By Lemma  \ref{lem-inner-gone} and Theorem~\ref{theorem:GH}, we get that 
\bee
	d_{\Hb^n}(p_\infty, \partial B^b(\r_0 + \bar{R}) ) \ge
	\limsup_{j\to\infty}    \di_{\Omega_j (\r_0+ \bar{R})}  \left(p_j, \Sigma_j (\r_0 + \bar{R}) \right)\ge  \bar{R}. 
\eee
So $ \overline{ B^b(p_\infty, R) } \subset \overline{B^b(\r_0+\bar{R})}$ holds for almost every $R\in (0, \bar R)$
and using the notation from Example \ref{lem-ball}
\begin{align*}
		S(p_\infty,R) =  & 
		( \overline{ B^b(p_\infty, R) } , d_{\Hb^n}, [[\overline{ B^b(p_\infty, R)  } ]]) \\
	 =	&  ( \overline{ B^b(R) } , d_{\Hb^n}, [[\overline{ B^b(R)  } ]]),
	\end{align*}
	where in the last equality we use the fact that for manifolds $N_i$ with integral current spaces $(N_i,d_{N_i}, [[N_i]])$, we have $(N_1,d_{N_1}, [[N_1]])=(N_2,d_{N_2}, [[N_2]])$ if and only if there exists an orientation preserving isometry between the $N_i$'s. Furthermore, by applying Example \ref{lem-ball} 
	 and Lemma \ref{lem-distequiv} item $(1)$, we get that  for almost every $R \in (0, \frac{\bar{R}}{3})$
\begin{align*}
S(p_j,R)= & \left( \overline{B^{\Omega_j (\r_0+ \bar{R})}(p_j, R) }, \di_{\Omega_j (\r_0+ \bar{R})}, [[\overline{B^{\Omega_j (\r_0+ \bar{R})}(p_j, R)}]] \right) \\
= & (\overline{ B^{M_j}(p_j, R)}  , d_{M_j}, [[  \overline{ B^{M_j}(p_j, R)} ]] ).
\end{align*}

Now by   ~\eqref{eqn:converging-points}, Lemma~\ref{lem-AASorh} and Remark \ref{rmrk-volConv}, we get that for almost every $R>0$ and a subsequence of $p_j$,
  \bee
S(p_j,R)  \xrightarrow[Z, \varphi_{j}]{\>\mathcal{F}\>}    S(p_{\infty},R),
 \eee
and 
$\mass(S(p_j,R))\to \mass(S(p_\infty,R))$. 
Thus,  for a subsequence of $p_j$ and almost every $R\in (0, \frac{\bar{R}}{3})$ it holds 
	\bee
	 (\overline{B^{M_{j}} (p_{j}, R)} , d_{M_j},  [[ \overline{B^{M_{j}} (p_{j},R) }]])  \IFto  (\overline{B^b(R) }, d_{\Hb^n}, [[\overline{B^b(R) }]])
	\eee
	and  $\vol(B^{M_j}(p_{j}, R)) \to \vol(B^b(R)).$

	To finalize the proof, take a sequence of positive real numbers $\bar{R_i} \to \infty$, $\bar R_1>1+(4+\pi \sinh(\r_0+1))\sqrt{1+\g^2}$, and  by a diagonalization argument, proceeding as above, get a further subsequence of the $p_j$ such that for almost all $R>0$ we have $\overline{B^{M_j}(p_{j}, R)} \IFto \overline{B^b(R)}$ and $\vol(B^{M_j}(p_{j}, R)) \to \vol(B^b(R))$. Thus, we can apply Theorem~\ref{theorem:subsequences} to conclude the proof. 
\end{proof}

%%%%%%%%%%%%%%%%%%%%%%%%%%%%%%%%%%%%%%%%
%%%%%%%%%%%%%%%%%%%%%%%%%%%%%%%%%%%%%%%%
%%%%%%%%%%%%%%%%%%%%%%%%%%%%%%%%%%%%%%%%

\appendix

\section{Capping and construction of suitable diffeomorphisms}\label{sec-appendix}

Let $M^n$ be an asymptotically hyperbolic manifold and let $\Psi \colon M \to \Hb^{n+1}$ be an isometric embedding such that $\Psi(M)=\graph(f)$ for some $f \colon \Hb^n \setminus U \to \R$, with non-empty minimal boundary $\pr U$.  Recall that this implies, among other things, that $f$ is constant at the boundary  -- for simplicity assume that $f\vert_{\pr U}=0$. 
 As before, we set $\Omega(\r)=\Psi^{-1}(\ovr{B^b(\r)}  \times \R)$ for $\r >0$. We also identify $\Omega(\rho)$ with $\Psi(\Omega(\rho)) \subset \Hb^{n+1}$  and equip it with the induced metric from $\Hb^{n+1}$.  Our goal is to ``cap-off''  $\Omega(\r)$ to obtain a differentiable manifold $\widetilde{\Omega}$ with a $C^0$ Riemannian metric and to
construct a $C^1$ diffeomorphism $\widetilde{\Phi}:B^b(\r) \to \widetilde{\Omega}$  so that  
the conditions on the Riemannian metrics of Theorem \ref{convBdry} (cf. \cite{A-P,HLP}) are satisfied. More precisely, we are interested in proving the following theorem.
The proof consists in a direct adaptation of the argument in \cite{HLP} to our setting.

\begin{thm}\label{thm-NewOmega}
Let $M\in\G_n(\r_0,\gamma, D)$ be an asymptotically hyperbolic graph with non-empty boundary and
 $\Psi:  M  \to  \Hb^{n+1}$ a smooth Riemannian isometric embedding as in Definition~\ref{def-Gn2}.
For any $\r>\r_0$, $L >0$,  there exists a differentiable manifold
$$\widetilde{\Omega}= \Omega(\r)  \cup \left[ \pr U \times (-L,0)  \right] \cup \widetilde U \subset \Hb^{n+1}$$
 which we endow with the induced Riemannian metric from $(\Hb^{n+1}, \bar b)$, $g$, such that  $g$ is a $C^0$ metric, $\Psi|_{\Omega(\r)}$ is a Riemannian isometric embedding into $\widetilde{\Omega}$ and for any $\lambda \in (0,1)$ there exists a $C^1$ diffeomorphism $\widetilde{\Phi}^\lambda:   \overline{B^b(\r)} \to \widetilde{\Omega}$
with the properties that 
\bee
g(  \widetilde \Phi^\lambda_*(u),  \widetilde \Phi^\lambda_*(u))  \geq \lambda b(u,u),  \quad \forall u \in T \overline{B^b(\r)}.
\eee
Further,  we have upper bounds $\widetilde D(L, \r_0)
$ and  $ \widetilde V(L, \r_0,  \vol(\partial U))
$  for  $\diam(\widetilde{\Omega}  \setminus \Omega(\r), d_{\widetilde{\Omega}}) $ and $\vol( \widetilde{\Omega} \setminus \Omega(\r))$, respectively, given by
\bee
\widetilde D(L, \r_0): =\diam( \widetilde{\Omega}  \setminus \Omega(\r),\di_{\widetilde{\Omega}\setminus \Omega(\r)}) \leq   2\cosh(\tfrac{\rho_0}{2}) L +   \tfrac{1}{2}+\cosh(\tfrac{\rho_0}{2})+ \rho_0 
\eee 
and 
\bee 
\widetilde V(L, \r_0,  \vol(\partial U)) := \Big(L \cosh(\tfrac{\rho_0}{2})+ \tfrac{1}{n-1}+2\cosh(\tfrac{\rho_0}{2})\Big)\, \vol(\partial U)
\eee 
and, for $L> D+\tfrac{1}{2}\sinh(\rho_0)\,\pi \sqrt{1+\gamma^2}$, the space $(\Omega(\r), \di_{\Omega(\r)})$ embeds in a distance preserving way into $\widetilde{\Omega}$.
\end{thm}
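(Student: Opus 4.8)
The plan is to carry out the capping-off construction of \cite{HLP}, adapted from the asymptotically flat setting to the warped product $\Hb^{n+1}=\Hb^n\times_V\R$. The first step is to understand $\graph(f)$ near its minimal boundary $\partial M=\Psi^{-1}(\partial U\times\{0\})$: I would show that $\graph(f)$ is a $C^1$ submanifold of $\Hb^{n+1}$ up to $\partial M$ and is tangent there to the vertical cylinder $\mathcal C:=\partial U\times\R\subset\Hb^{n+1}$, with the metric induced on $\partial M$ equal to $b|_{\partial U}$. This is precisely where the clause of condition $(6)$ in Definition \ref{def-Gn2} asserting that $\grad^b_X f$ stays bounded along tangential $X$ (while $\grad^b_\nu f\to\infty$) is used: it forces the tangent planes of $\graph(f)$ to converge, as one approaches $\partial U$, to those of $\mathcal C$, the $\graph(f)$-normal direction to $\partial M$ becoming purely vertical. (As the paper remarks, this boundedness was only implicitly assumed in \cite{HLP}.)

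With this in hand I would set $\widetilde\Omega=\Psi(\Omega(\r))\cup\mathcal C_L\cup\widetilde U$ inside $\Hb^{n+1}$, where $\mathcal C_L:=\partial U\times(-L,0)$ is the piece of $\mathcal C$ hanging below $\partial M$ and $\widetilde U$ is the graph over $U$ of a smooth bounded function $\tilde f$ with $\tilde f|_{\partial U}=-L$ that is vertical at its rim $\partial U\times\{-L\}$ (i.e.\ $\grad^b_\nu\tilde f\to\infty$ and $\grad^b_X\tilde f$ bounded there), whose vertical part has $s$-extent at most $2$. By the tangency statement the gluing along $\partial M$ is $C^1$, and the gluing of $\widetilde U$ to $\mathcal C_L$ along $\partial U\times\{-L\}$ is $C^1$ by construction; in both cases the two induced metrics agree on the common tangent planes, so $\widetilde\Omega$ carries a $C^0$ Riemannian metric $g$, and $\Psi|_{\Omega(\r)}$ is tautologically an isometric embedding into it. Topologically $\widetilde\Omega$ is $\overline{B^b(\r)}$ with the ball $U$ replaced by a cylinder glued to a ball, hence diffeomorphic to $\overline{B^b(\r)}$. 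The bounds on $\widetilde\Omega\setminus\Omega(\r)=\mathcal C_L\cup\widetilde U$ follow from the coarea formula, the inclusion $U\subset B^b(\r_0/2)$ (so $V=\cosh\le\cosh(\r_0/2)$ on $\overline U$), the hyperbolic isoperimetric inequality $\vol_b(U)\le\tfrac1{n-1}\vol(\partial U)$ (Proposition \ref{prop-IsoIneq} with $K=1$), and the mean convexity/star-shapedness of $\partial U$ (which keeps the level sets of $\tilde f$ near $\partial U$ of area $\le\vol(\partial U)$): the cylinder contributes at most $L\cosh(\r_0/2)\vol(\partial U)$ to the volume and at most $2L\cosh(\r_0/2)$ to the diameter, with the other terms coming from $U$ and the vertical collar of $\widetilde U$.

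The construction of the $C^1$ diffeomorphism $\widetilde\Phi^\lambda$ with $g(\widetilde\Phi^\lambda_*u,\widetilde\Phi^\lambda_*u)\ge\lambda\, b(u,u)$ is the technical heart. I would build it piecewise with respect to $\overline{B^b(\r)}=(\overline{B^b(\r)}\setminus U)\cup U$. On the bulk of $\overline{B^b(\r)}\setminus U$ take the graph map $x\mapsto(x,f(x))$, whose pulled-back metric is $b+V^2\,df\otimes df\ge b$; likewise on the bulk of $U$ take a near-identity map onto the ``horizontal'' part of $\widetilde U$, again with pulled-back metric $\ge b$. All the distortion is then confined to a thin collar of $\partial U$ in $\overline{B^b(\r)}$: there one uses that $[\graph(f)\text{ near }\partial M]\cup\partial M\cup\mathcal C_L$ is a single $C^1$ hypersurface carrying a $C^1$ (height, angle) parametrization across $\partial M$, and maps the matching collar of $\partial U$ onto it by a map that stretches in the radial direction --- which only helps, since it enlarges $g$ --- and is the identity on the $\bS^{n-1}$ factor, for which the tangential metric ratio equals $\sinh^2(r_{\partial U})/\sinh^2(r_{\partial U}-t)\ge1$. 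The only place $\lambda<1$ is genuinely needed is in interpolating between the (height, angle) parametrization and the graph chart, and, exactly as in \cite{HLP}, this interpolation can be arranged so that the pulled-back metric is $\ge\lambda b$ for any prescribed $\lambda\in(0,1)$. I expect this interpolation estimate, and the bookkeeping needed to check that the pieces assemble into a genuine $C^1$ diffeomorphism onto $\widetilde\Omega$, to be the main obstacle.

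Finally, for $L>D+\tfrac12\sinh(\r_0)\pi\sqrt{1+\gamma^2}$ I would prove the distance-preservation claim by showing that in any path of $\widetilde\Omega$ joining two points of $\Omega(\r)$, every excursion into $\mathcal C_L\cup\widetilde U$, with entry and exit points $x,y\in\partial M$, can be replaced by a no-longer path inside $\Omega(\r)$. If such an excursion stays in $\overline{\mathcal C_L}=\partial U\times[-L,0]$, then projecting out the $s$-coordinate (the metric there being the product $b|_{\partial U}\oplus V^2ds^2$) shows its length is at least $\di_{\partial M}(x,y)$, which is realized by a curve in $\partial M\subset\Omega(\r)$. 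If the excursion reaches $\widetilde U$ it must traverse the full height of $\mathcal C_L$ twice, so has length at least $2L$; but $x,y\in\partial M\subset\Omega(\r_0)$, and the depth bound of condition $(5)$ together with $\diam(\Sigma(\r_0),\di_{\Sigma(\r_0)})\le\pi\sinh(\r_0)\sqrt{1+\gamma^2}$ (as in the proof of Theorem \ref{thm-IFbounds}) give $\di_{\Omega(\r)}(x,y)\le\di_{\Omega(\r_0)}(x,y)\le 2D+\pi\sinh(\r_0)\sqrt{1+\gamma^2}<2L$. In either case the excursion is shortcut inside $\Omega(\r)$, so $d_{\widetilde\Omega}(p,q)\ge\di_{\Omega(\r)}(p,q)$ for $p,q\in\Omega(\r)$; since the reverse inequality is automatic, $(\Omega(\r),\di_{\Omega(\r)})$ embeds in $\widetilde\Omega$ in a distance-preserving way.
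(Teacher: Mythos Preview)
Your proposal is correct and follows essentially the same approach as the paper (both adapting \cite{HLP}): cap by cylinder plus graph of a function $f_c$, build $\widetilde\Phi^\lambda$ piecewise with a height-based reparametrization in a collar of $\partial U$, derive the volume and diameter bounds via coarea and the hyperbolic isoperimetric inequality, and argue distance preservation by shortcutting excursions into the cylinder or cap. The only notable implementation difference is that the paper works in \emph{normal exponential} coordinates $E:(-\epsilon_*,\epsilon_*)\times\partial U\to\Hb^n$ with $E^*b=dt^2+\omega_t$ (so the tangential-metric control is $\omega_{(s,x)}(\bar u,\bar u)/\omega_{(t,x)}(\bar u,\bar u)\ge\sqrt\lambda$ by continuity in $t$, not your $\sinh^2(r_{\partial U})/\sinh^2(r_{\partial U}-t)$, which would only be correct if $\partial U$ were a coordinate sphere), and bounds the cap level-set areas by $2\vol(\partial U)$ via continuity of $t\mapsto\vol_{\omega_t}(\partial U)$ rather than by $\vol(\partial U)$ via mean convexity.
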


The remainder of the appendix will be dedicated to working \emph{in detail} through all the steps necessary in the construction from \cite{HLP} in our hyperbolic setting.

\subsection{Normal exponential map, defining $\widetilde U$ and $f_c$}\label{subsec:cappingFunc}

Consider the normal exponential map of $\partial U \subset \Hb^n$. By compactness of $U$, there exists an $\epsilon_*>0$ and an open neighborhood $N_{\epsilon_*}$ of $\partial U$ such that $E : (-\epsilon_*,\epsilon_*) \times \partial U \to N_{\epsilon_*}, (t,x)\mapsto \exp_x(t\,\nu(x))$  is a diffeomorphism and by shrinking $\epsilon_*$ we may assume that $E$ is still a diffeomorphism on $I\times \partial U$ for some open subset $I$ with $[-\epsilon_*,\epsilon_*]\subset I$.

With the previous notation, we are able to define a map 
 $\phi$ from $(-L,\epsilon_*) \times \partial U$ to $\Hb^{n+1}$ with image in $\widetilde{\Omega}\subset \Hb^{n+1}$ by setting  $\phi(t,x):=(E(t,x), f(E(t,x)))$ if $t\geq 0$ and $\phi(t,x):=(E(0,x),t)$ if $t\leq 0$. Note that $\phi$ is not a $C^1$ map (but piecewise smooth),  so we will replace $\phi$ by a $C^1$ map $\Phi^\lambda$ in order to construct $\widetilde{\Phi}^\lambda$ appearing in 
 Theorem \ref{thm-NewOmega}.  This will be done by introducing an appropriate scaling. The scaling will crucially rely on $\nabla^b_{\nu}f=\tfrac{d}{dt} f(E(t,x))\to \infty$ as $t\to 0^+$. This will show that  the image of $\Phi^\lambda$ is indeed a $C^1$ submanifold of $\Hb^{n+1}$.

Now we want to construct a cap $\widetilde{U}$ which will be given as the graph of a capping function $f_c: \overline{U} \to [-1,0]$, smooth on $U$ and satisfying $f_c|_{\pr U}=0$ and $\nabla^b_{\nu}f_c=\tfrac{d}{dt} f_c(E(t,x))\to \infty$ as $t\to 0^-$, i.e.\ as one approaches $\partial U$, so that we will be able to attach it to the cylinder and will be able to make the resulting manifold $C^1$ in the same way as we will be doing for $f$ above. We construct this capping now: 

We start by defining $f_c $ on $ E((-\epsilon_*,0] \times \partial U)\subset \overline{U}\subset \Hb^n$ by setting
$$ f_c(E(t,x)):=\chi(t)$$
where $\chi : (-\epsilon_*,0]\to [-1,0]$ is smooth on $(-\epsilon_*,0)$, strictly monotonically increasing on $(-\tfrac{\epsilon_*}{2},0]$, $\chi(0)=0$, $\chi|_{(-\epsilon_*,-\tfrac{\epsilon_*}{2}]}=-1$ and $\chi'(t)\to \infty $ as $t\to 0^-$. On $\overline U \setminus E((-\epsilon_*,0] \times \partial U)$ we simply set $f_c= -1$.

By shrinking $\epsilon_*$ if necessary, we further assume that for all $s\in (-1,0]$ we have $\H^{n-1}((f_c\circ E)^{-1}(s))=\vol_{\omega_{\chi^{-1}(s)}}(\pr U)\leq 2\vol_{\omega_0}(\pr U) =2|\pr U|_b $,  where $\{\omega_t\}$ is the family of Riemannian metrics on $\partial U$ induced by the normal exponential map (see \eqref{eq:n-metricinnormalexp} below). 
Recall that  $\pr U$ is star-shaped and this implies that $\pr U$ has a single connected component which is $C^1$-diffeomorphic to $\bS^{n-1}$ via the graph $\theta \mapsto (\rho_{\pr U}(\theta),\theta)\in (0,\infty)\times \bS^{n-1} \cong \Hb^n\setminus \{0\} $. Thus, we may further shrink $\epsilon_*$ to additionally ensure that the level sets  are star-shaped as well: Since, by assumption, $\r_{\pr U}:\bS^{n-1}\to (0,\infty)$ is a $C^1$ function whose graph coincides with $\pr U$, the spherical coordinate vector field $\pr_{\r}$ can never be tangential to $\pr U$ and $\langle \pr_{\r},\nu \rangle_b>0$ near $\pr U$ and one can use the monotonicity of $t\mapsto \rho(E(t,\theta))$ and an implicit function theorem argument to obtain a unique differentiable family of differentiable maps $\rho_d:\bS^{n-1}\to (0,\infty)$, $d\in [-\tfrac{\epsilon_*}{2},0]$, satisfying $\{(\r_d(\theta),\theta)\in \Hb^n:\,\theta \in \bS^{n-1}\}=E(\{d\}\times \pr U)$. This construction is very similar to the process we will encounter in defining the scaling $\tilde{\alpha}$ in Lemma \ref{lemma-alpha-x} hence we will skip the details here.

\subsection{Construction of $\widetilde \Phi^\lambda$}

We already introduced the normal exponential map of $\partial U$, which gives a diffeomorphism $E : (-\epsilon_*,\epsilon_*) \times \partial U \to N_{\epsilon_*}$ onto an open neighborhood $N_{\epsilon_*}$ of $\partial U$ in $ \Hb^n$ for $\epsilon_*>0$ as before.  Pulling back the metric and using "t" to denote the coordinate on $(-\epsilon_*,\epsilon_*)$, standard properties of the normal exponential map give that the metric splits as
\be\label{eq:n-metricinnormalexp} 
E^*b=dt^2+\omega_t 
\ee
where $\omega_t=\omega_{ij}(t,x) dx^i dx^j$ (for coordinates $(x^i)$ on $\partial U$) is a family of Riemannian metrics on $\partial U$.
Let  $\hat{f}:= f\circ E: (0, \epsilon_*)\times \partial U\to\Omega(\rho)$ and $\hat{f}_c:= f_c\circ E: (-\epsilon_*, 0)\times \partial U\to \Hb^{n+1}$ be the expressions of $f$ and $f_c$ in the geodesic normal coordinates. 

Fix $\lambda \in (0,1)$.  Let $\veps\equiv \veps(\lambda, f,\partial U, L, f_c) \in (0,1)$, $\veps<\epsilon_*$, satisfy  $\veps <\veps_0$ for $\veps_0$ from the Lemmas \ref{lem:alpha-props} and \ref{lem:beta}, $\tfrac{2L}{\veps}\geq 1$ 
and
\begin{align}\label{eq:vepscondition}
\inf_{(t,s)\in (-\veps,\veps)\times (-\veps,\veps)} \;  \min_{x\in \partial U, \,\bar{u}\in \R^{n-1}, |\bar{u}|_e=1} \tfrac{  \sum_{i,j}\omega_{ij}(s,x)\bar{u}^i \bar{u}^j}{ \sum_{i,j} \omega_{ij}(t,x)\bar{u}^i \bar{u}^j }\geq \sqrt{\lambda}>\lambda %\quad  \quad \mathrm{on} \quad (0,\veps)
\end{align}
where $\| \cdot \|_e$ denotes the Euclidean norm on $\R^{n-1}$. Note that this is possible because the function in the infimum above is continuous and equal to one on the diagonal. 

We will define a $C^1$ diffeomorphism $\Phi^\lambda$ from $(-\veps,\veps)\times \pr U$ to an open subset of $\widetilde{\Omega}$ containing $\partial U\times (-L,0)$ such that  $\Phi^\lambda|_{(\tfrac{2 \veps}{3},\veps )} = (\mathrm{id}, \hat{f})$,  $\Phi^\lambda|_{(-\veps, -\tfrac{5 \veps}{6} )} = (\mathrm{id}, \hat{f}_c-L)$ 
and  $g(\Phi^\lambda_*(u),\Phi^\lambda_*(u))\geq \lambda \, (E^*b)(u,u)$ holds for all $u\in T((-\veps,\veps)\times \pr U))$. 

Once we have defined $\Phi^\lambda$ we will set
$$\widetilde{\Phi}^\lambda(p) :=\begin{cases}
	(p, f_c(p)-L) \quad \quad \quad p\in U\setminus E((-\veps,\veps)\times \partial U)\\
	\Phi^\lambda(E^{-1}(p)) \quad \quad p\in
	E((-\veps,\veps)\times \partial U)\\
	(p, f(p)) \quad \quad \quad p\in \Omega(\rho)\setminus E((-\veps,\veps)\times \partial U)
\end{cases}
$$
giving us the desired $C^1$ diffeomorphism from $\overline{B^b(\rho)}\to \widetilde{\Omega}$ satisfying  $g(\widetilde{\Phi}^\lambda_*(u),\widetilde{\Phi}^\lambda_*(u))\geq \lambda \, b(u,u)$  everywhere because it is either a graph (where $g\geq b$ automatically since $V\geq 1$) 
 or given by $\Phi^\lambda \circ E^{-1}$ which will be constructed to do so.

\subsubsection{Definition of $\Phi^{\lambda}$ on $(0,\veps)\times \pr U$} 

To make $\Phi^\lambda$ continuously differentiable across the gluing surface between $\Omega(\rho)$ and the cylinder, we re-scale the $t$ parameter. This is accomplished by the function $\alpha$ from Lemma ~\ref{lem:alpha-props}: 
Set 
$$\Phi^\lambda(t,x):= (E(\alpha(t,x), x), \hat{f}(\alpha(t,x),x))\in \Omega(\rho) \subset \Hb^{n+1} \quad\quad \mathrm{for}\; (t,x)\in (0,\veps)\times \partial U.$$
By the properties of $\alpha$ this smoothly matches $(\mathrm{id}, \hat{f})$ near $t=\veps$. We now show that 
$g(\Phi^\lambda_*(u),\Phi^\lambda_*(u))\geq \lambda \, (E^*b)(u,u)$ holds for all $u\in T((0,\veps)\times \pr U))$. 

For bookkeeping let us denote local coordinates on  $(0,\veps)\times \partial U$ by $t$ and $(x^i)$. These induce canonical coordinates on the graph $\{(E(t,x), \hat{f}(t,x))\in \Hb^{n+1}: (t,x)\in (0,\veps) \times \partial U \}\subset \mathrm{Image}(\Phi^\lambda)\subset \Omega(\rho) \hookrightarrow \Hb^{n+1}$, which we - by a  slight abuse of notation - again denote by $t$ and $(x^i)$, and on $E((0,\veps)\times \partial U) \times \R \subset \Hb^{n+1}$, which we denote by $t,(x^i),s$.

In the coordinates we are using, 
\begin{equation}\label{eqn:phigraph}
\Phi^\lambda(t,x):= (\alpha(t,x), x, \hat{f}(\alpha(t,x),x))\in  (0,\veps)\times \partial U	\times \R
\end{equation}
and we have:
\begin{align*}
& \Phi^\lambda_*(\partial_t|_{(t,x)}) =  (\partial_t\alpha)|_{(t,x)}\, \partial_t|_{\Phi^\lambda(t,x)} +   (\partial_t \hat{f})|_{(\alpha(t,x),x)} \,(\partial_t\alpha)|_{(t,x)}\, \partial_s|_{\Phi^\lambda(t,x)}\\
& \Phi^\lambda_*(\partial_{x^i}|_{(t,x)}) =   (\partial_i \alpha)|_{(t,x)} \partial_{t}|_{\Phi^\lambda(t,x)}+\partial_{i}|_{\Phi^\lambda(t,x)} + \big[(\partial_t \hat{f})|_{(\alpha(t,x),x)} \,(\partial_i\alpha)|_{(t,x)}+(\partial_i \hat{f})|_{(\alpha(t,x),x)}\big]\, \partial_s|_{\Phi^\lambda(t,x)}.
\end{align*}
Since the metric $g$ on $\widetilde \Omega$ is induced by $\bar{b}$, we have
\begin{align*}
g(\Phi^\lambda_*(\partial_t|_{(t,x)}),\Phi^\lambda_*(\partial_t|_{(t,x)}))  = &\bar{b}(\Phi^\lambda_*(\partial_t|_{(t,x)}),\Phi^\lambda_*(\partial_t|_{(t,x)}))\\
	 = & (\partial_t\alpha)^2|_{(t,x)} + V(r(\a(t,x),x))^2  (\partial_t \hat{f})^2|_{(\alpha(t,x),x)} \,(\partial_t\alpha)^2|_{(t,x)} \\
g(\Phi^\lambda_*(\partial_t),\Phi^\lambda_*(\partial_i))  = & \partial_t\alpha \; \partial_i\alpha + V^2 \big[ \partial_t\hat{f}\,\partial_i\alpha +\partial_i \hat{f} \big] \,\partial_t \hat f\; \partial_t\alpha \\
g(\Phi^\lambda_*(\partial_i),\Phi^\lambda_*(\partial_j))  = & (\partial_i\alpha)(\partial_j\alpha) +\omega_{ij} + V^2 \big[ \partial_t\hat{f}\,\partial_i\alpha +\partial_i \hat{f}  \big] \big[ \partial_t \hat{f}\,\partial_j\alpha +\partial_j \hat{f}  \big],
	\end{align*}
where we suppressed the arguments for the second and third formula (as a rule of thumb: $\hat{f}$ and its derivatives will have argument $(\alpha(t,x),x)$, $\alpha$ and its derivatives $(t,x)$ and $\omega$  and $V=\cosh$ are evaluated at $\mathrm{pr}_{\Hb^n}(\Phi^\lambda(t,x))=(\a(t,x),x)$ and $r(\a(t,x),x)$ respectively) and used the form of $\bar{b}=b + V(r)^2ds^2 = dt^2+\omega_t + V(r)^2ds^2 = dt^2+\omega_{ij}(t,x) dx^idx^j + V(r)^2ds^2$ in our chosen coordinates. 

We introduce the following notation
\begin{align} 
 B^j(t,x) &:= \omega^{ij}(\a(t,x),x)  \big[(\partial_t \hat{f})|_{(\alpha(t,x),x)} \,(\partial_i\alpha)|_{(t,x)}+(\partial_i \hat{f})|_{(\alpha(t,x),x)}\big],  \label{eq:defB} \\
	A^j(t,x) &:= \omega^{ij}(\a(t,x),x)   (\partial_i\alpha)(t,x).  \label{eq:defA}
\end{align}
With this the above expressions can be rewritten as
\begin{align*}
g(\Phi^\lambda_*(\partial_t),\Phi^\lambda_*(\partial_t)) & =(\partial_t\alpha)^2 + V^2  (\partial_t \hat{f})^2 \,(\partial_t\alpha)^2 \\
g(\Phi^\lambda_*(\partial_t),\Phi^\lambda_*(\partial_i))&= \partial_t\alpha \; \omega_{ik} A^k + V^2 \, \omega_{ik} B^k\, \partial_t \hat{f} \, \partial_t\alpha\\
g(\Phi^\lambda_*(\partial_i),\Phi^\lambda_*(\partial_j))&=\omega_{jk} A^k  \omega_{i\ell}A^{\ell} +\omega_{ij} + V^2 \, \omega_{jk} B^k  \omega_{i\ell}B^{\ell}.
\end{align*}
Now let $u=a\partial_t|_{(t,x)} +u^i \partial_i|_{(t,x)}=a\partial_t|_{(t,x)} +\bar{u} \in T_{(t,x)}((0,\veps)\times \pr U)$ be arbitrary and compute (ignoring basepoints/arguments for now for readability)
\begin{align}\nonumber
	g(\Phi^\lambda_*(u),\Phi^\lambda_*(u))  &= a^2 g(\Phi^\lambda_*(\pr_t),\Phi^\lambda_*(\pr_t))+  2au^i g(\Phi^\lambda_*(\pr_t),\Phi^\lambda_*(\pr_i)) +           u^iu^jg(\Phi^\lambda_*(\pr_i),\Phi^\lambda_*(\pr_j))  \\ \nonumber
	&=  a^2 (\partial_t\alpha)^2 + a^2 V^2  (\partial_t \hat{f})^2 \,(\partial_t\alpha)^2 + 2 a \langle \bar{u},A\rangle_{\omega} \,\partial_t\alpha + 2 a V^2 \langle \bar{u},B\rangle_{\omega} \partial_t \hat{f} \,\partial_t\alpha \\
	&+  \langle\bar{u},A\rangle_{\omega}^2+ \omega_{(\a(t,x),x)}(\bar{u},\bar{u}) + V^2 \langle \bar{u},B\rangle_{\omega}^2 \label{eqn:gest1}.
\end{align}
By Lemma \ref{lem:alpha-props} we know that either 
\begin{align*}
& (\partial_t \hat{f})^2 \,(\partial_t\alpha)^2\geq 1 \quad \text{and} \quad |B(t,x)|_{\omega(\a(t,x),x)}\leq \tfrac{1}{V(r(\a(t,x),x))^2} (1-\sqrt{\lambda}) \quad \text{or}, \\
&  (\partial_t\alpha)^2\geq 1 \quad  \text{and} \quad |A(t,x)|_{\omega(\a(t,x),x)}\leq (1-\sqrt{\lambda}).
  \end{align*}
In the first case we estimate
\begin{align}\nonumber
	g(\Phi^\lambda_*(u),\Phi^\lambda_*(u))  & \geq  (a \partial_t\alpha+ \langle\bar{u},A\rangle_{\omega})^2+ a^2 V^2  (\partial_t \hat{f})^2 \,(\partial_t\alpha)^2+ 2 a V^2 \langle \bar{u},B\rangle_{\omega} \partial_t \hat{f} \,\partial_t\alpha  + \omega_{(\a(t,x),x)}(\bar{u},\bar{u}) \\\nonumber
	&\geq a^2 V^2  (\partial_t \hat{f})^2 \,(\partial_t\alpha)^2 - V^2 |B|_{\omega(\a(t,x),x)} (2 a  |\partial_t \hat{f}| \,|\partial_t\alpha| |\bar{u}|_{\omega(\a(t,x),x)}) + \omega_{(\a(t,x),x)}(\bar{u},\bar{u})   \\\nonumber
	&\geq a^2 V^2  (\partial_t \hat{f})^2 \,(\partial_t\alpha)^2 (1-|B|_{\omega(\a(t,x),x)}) +  \omega_{(\a(t,x),x)}(\bar{u},\bar{u}) (1-V^2|B|_{\omega(\a(t,x),x)})\\\nonumber
	&\geq a^2 \sqrt{\lambda} + \omega_{(\a(t,x),x)}(\bar{u},\bar{u}) \sqrt{\lambda} \geq a^2 \lambda + \omega_{(\a(t,x),x)}(\bar{u},\bar{u}) \sqrt{\lambda} 
\end{align} 
where we used the Cauchy-Schwarz inequality and that $V^2 \geq 1$. Now by \eqref{eq:vepscondition} we have $\omega_{(\a(t,x),x)}(\bar{u},\bar{u})\geq \sqrt{\lambda} \omega_{(t,x)}(\bar{u},\bar{u})$ (note that this is scaling invariant so it is sufficient to consider $\bar{u}$ with Euclidean norm equal one) and we obtain the desired
\begin{align}
	g(\Phi^\lambda_*(u),\Phi^\lambda_*(u))  &\geq  \lambda  (a^2 + \omega_{(t,x)}(\bar{u},\bar{u}))= \lambda b(u,u).
\end{align} 

In the second case we proceed similarly and estimate
\begin{align}\nonumber
	g(\Phi^\lambda_*(u),\Phi^\lambda_*(u))  & \geq a^2 (\partial_t\alpha)^2 + (a V  \partial_t \hat{f} \,\partial_t\alpha + V \langle \bar{u},B\rangle_{\omega})^2 + 2 a  \langle \bar{u},A\rangle_{\omega} \,\partial_t\alpha  + \omega_{(\a(t,x),x)}(\bar{u},\bar{u}) \\\nonumber
	&\geq  a^2 (\partial_t\alpha)^2 -  |A|_{\omega(\a(t,x),x)} (2 a \,|\partial_t\alpha| |\bar{u}|_{\omega(\a(t,x),x)})  + \omega_{(\a(t,x),x)}(\bar{u},\bar{u})  \\\nonumber
	&\geq a^2 (\partial_t\alpha)^2 (1-|A|_{\omega(\a(t,x),x)}) +  \omega_{(\a(t,x),x)}(\bar{u},\bar{u}) (1-|A|_{\omega(\a(t,x),x)})\\\nonumber
	&\geq a^2 \sqrt{\lambda} + \omega_{(\a(t,x),x)}(\bar{u},\bar{u}) \sqrt{\lambda} \geq  \lambda b(u,u)
\end{align} 
again using the Cauchy-Schwarz inequality and \eqref{eq:vepscondition}.

\subsubsection{Definition of $\Phi^\lambda$ on $(-\tfrac{\veps}{2},0)\times \partial U$}

We set 
\begin{equation}
	\label{eqn:phicyl} \Phi^\lambda(t,x):= (E(0,x), \tfrac{2L}{\veps}t)\equiv (0, x, \tfrac{2L}{\veps}t ) \quad\quad\mathrm{for}\;(t,x)\in (-\tfrac{\veps}{2},0)\times \partial U.
\end{equation}
Let us check that $\Phi^\lambda :(-\tfrac{\veps}{2},\veps)\to \widetilde{\Omega}$ defined by \eqref{eqn:phigraph} and \eqref{eqn:phicyl} (and continuously extended across $t=0$ where both functions become $(0,x, 0)$) is $C^1$: On $(0, \veps) \times \partial U$ from \eqref{eqn:phigraph}
we see 
\begin{align*}
	(\partial_t \Phi^\lambda)(t,x) &= (\pr_t\a|_{(t,x)} ,0, \pr_t \hat f|_{(\alpha(t,x),x)} \,\pr_t\alpha|_{(t,x)})   \to (0,0, \tfrac{2L}{\veps})\quad \mathrm{as}\;t\to 0^+\\
		(\partial_i \Phi^\lambda)(t,x) &= (\pr_i\a|_{(t,x)},e_i, \partial_t \hat{f}|_{(\alpha(t,x),x)} \,\partial_i\alpha|_{(t,x)}+\partial_i \hat{f}|_{(\alpha(t,x),x)}) \to (0,e_i, 0)\quad \mathrm{as}\;t\to 0^+
\end{align*}
by Lemma \ref{lem:alpha-props}. On $(-\tfrac{\veps}{2},0)\times \partial U$ clearly $(\partial_t \Phi^\lambda)(t,x) = (0,0, \tfrac{2L}{\veps})$ and $(\partial_i \Phi^\lambda)(t,x) = (0,e_i,0)$, so $\Phi^\lambda$ is indeed $C^1$ across $\veps=0$.

Next we check that $g(\Phi_*^\lambda (u),\Phi_*^\lambda (u))\geq \lambda (E^*b)(u,u)$ for $u=a\pr_t+u^i\pr_i\in T_{ (t,x)} (-\tfrac{\veps}{2},0] \times \partial U$ for $(t,x)\in (-\tfrac{\veps}{2},0]\times \pr U$: Using $\Phi^\lambda_*(\pr_t)=\tfrac{2L}{\veps} \pr_s$ and $\Phi^\lambda_*(\pr_i)= \pr_i$ we get
 \begin{equation}
 	g(\Phi_*^\lambda (u),\Phi_*^\lambda (u))= \omega_{ij}(0,x) u^i u^j  + a^2 V^2 (\tfrac{2L}{\veps})^2 \geq  \omega_{ij}(0,x) u^i u^j + a^2 \geq \lambda (E^*b)(u,u)
 \end{equation}
where we used that $\tfrac{2L}{\veps}\geq 1$ and \eqref{eq:vepscondition} by assumption on $\veps$.

\subsubsection{Definition of $\Phi^\lambda$ on $(-\veps,-\tfrac{\veps}{2})\times \partial U$} 

This will be completely analogous to the first part, but using $\hat{f}_c-L$ instead of $\hat{f}$ and the function $\alpha_c : (-\veps, -\tfrac{\veps}{2})\to (-\veps,0)$ from Lemma \ref{lem:beta} instead of $\alpha$. We set
\begin{equation}\label{eqn:phicap}
 \Phi^\lambda(t,x):= ( E(\a_c(t,x),x), \hat{f}_c(\a_c(t,x),x)-L)\equiv (\a_c(t,x),x, \hat{f}_c(\a_c(t,x),x)-L)
\end{equation}
for $(t,x)\in (-\veps,-\tfrac{\veps}{2})\times \partial U$.
Similarly to the computations for $\Phi^\lambda$ on $(0,\veps) \times \partial U$ and  $(-\tfrac{\veps}{2},0) \times \partial U$ we see that this has the desired properties.

\subsection{Defining the rescalings $\alpha$, $\alpha_c$} 

We first show that we can guarantee suitable behavior near $0$. 

\begin{lemma} \label{lemma-alpha-x} 
Fix $C\in (0,\infty)$, $c\in (0,\epsilon_*)$. There exists $0<\delta_0(C, c, \hat{f})<\tfrac{c}{3}$ 
and a smooth, strictly increasing in $t$, map $\ta \colon (0,\delta_0)\times \pr U \to (0, \tfrac{c}{3})$ such that  for all $x\in \pr U$
\begin{enumerate}
	\item $\hat{f}(\ta(t,x),x)=C\,t$ for $\,  t \in (0,\delta_0)$ and
\item $\lm{t}{0^+}\ta(t,x)=0$ 
\end{enumerate}
\end{lemma}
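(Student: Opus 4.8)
The plan is to define $\ta(t,x)$ implicitly by inverting the normal--direction profile of $\hat f$ and composing with the linear map $t\mapsto Ct$. The key input is condition $(6)$ of Definition~\ref{def-Gn2}: since $\nabla^b_{\nu}f\to\infty$ as one approaches $\pr U$, we have $\partial_t\hat f(t,x)=\nabla^b_{\nu}f(E(t,x))\to+\infty$ as $t\to 0^+$, and this convergence is uniform in $x\in\pr U$ because $\pr U$ is compact. In particular there is a $c_1\in(0,c/3)$ (recall $c<\epsilon_*$) with $\partial_t\hat f>0$ on $(0,c_1)\times\pr U$. Moreover $\hat f$ extends continuously to $t=0$ with $\hat f(0,x)=f(x)=0$, since $f$ is continuous and $f|_{\pr U}\equiv 0$. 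Hence for each fixed $x$ the map $s\mapsto\hat f(s,x)$ is continuous on $[0,c_1)$, smooth and strictly increasing on $(0,c_1)$, with $\hat f(0,x)=0$ and $\hat f(s,x)>0$ for $s\in(0,c_1)$; by the intermediate value theorem it restricts to a homeomorphism from $(0,c_1/2)$ onto $\bigl(0,\hat f(c_1/2,x)\bigr)$.

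First I would fix the constants. Set $m:=\min_{x\in\pr U}\hat f(c_1/2,x)$, which is positive (a positive continuous function minimized over the compact $\pr U$), and put $\delta_0:=\tfrac12\min\{m/C,\,c_1,\,c/3\}$; this lies in $(0,c/3)$ and depends only on $C$, $c$ and $\hat f$ (with $\pr U$ and $\epsilon_*$ regarded as fixed background data). For $(t,x)\in(0,\delta_0)\times\pr U$ we then have $Ct<m\le\hat f(c_1/2,x)$, so there is a unique $\ta(t,x)\in(0,c_1/2)\subset(0,c/3)$ with $\hat f(\ta(t,x),x)=Ct$, which is property $(1)$ and gives the claimed codomain. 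Smoothness of $\ta$ follows from the implicit function theorem applied to $F(s,t,x):=\hat f(s,x)-Ct$, using $\partial_sF=\partial_s\hat f>0$ on the relevant range together with the uniqueness just established; and differentiating $\hat f(\ta(t,x),x)=Ct$ in $t$ gives $\partial_t\ta(t,x)=C/\,\partial_t\hat f(\ta(t,x),x)>0$, so $\ta$ is strictly increasing in $t$.

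For property $(2)$, since $t\mapsto\ta(t,x)$ is increasing and bounded below by $0$, the limit $\ell(x):=\lim_{t\to0^+}\ta(t,x)\ge 0$ exists; if $\ell(x)>0$ then continuity of $\hat f$ would force $\hat f(\ell(x),x)=\lim_{t\to0^+}Ct=0$, contradicting $\hat f(s,x)>0$ for $s\in(0,c_1)$, so $\ell(x)=0$. The only real obstacle is the bookkeeping: checking that $c_1$, $m$ and $\delta_0$ can be taken uniformly in $x$ and depend only on the allowed quantities, which is precisely where compactness of $\pr U$ and the uniform blow-up from condition $(6)$ enter. (Note that for this particular lemma only the positivity $\partial_t\hat f>0$ near $\pr U$ is needed, not the full blow-up $\partial_t \hat f\to\infty$; the latter will be used in Lemma~\ref{lem:alpha-props}.)
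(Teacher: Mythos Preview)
Your proof is correct and follows essentially the same approach as the paper: invert the map $s\mapsto \hat f(s,x)$ near $s=0$ using the positivity of $\partial_t\hat f$, then compose with $t\mapsto Ct$, choosing $\delta_0$ uniformly in $x$ via compactness of $\pr U$ and verifying smoothness by the implicit function theorem together with uniqueness. Your bookkeeping is slightly cleaner than the paper's (you take a single uniform $c_1$ rather than an $x$-dependent $\veps(x)$ chosen continuously), but the route is the same; the one minor overreach is the sentence ``this convergence is uniform in $x$ because $\pr U$ is compact'' --- compactness alone does not upgrade pointwise divergence to uniform divergence, though under the natural reading of condition~(6) (divergence as $d(\cdot,\pr U)\to 0$) the weaker conclusion you actually use, namely $\partial_t\hat f>0$ on a uniform strip, does hold.
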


\begin{proof} 
Consider the map $F(t,x):= \tfrac{1}{C}\hat{f}(t,x)$. 
This is a smooth map from $(0,\epsilon_*)$ to $(0,\infty)$ which extends continuously to $0$ as one approaches $\{0\}\times \pr U$. By the asymptotic assumption on $\nabla_\nu^b f$ we have
$\lim_{s\to 0^-} (\pr_t\hat{f})(s,x)=\infty $ for all $x\in \pr U$, so we may choose $\veps(C,c,\hat{f}, x)\equiv \veps(x)<\tfrac{c}{3}$ small enough such that $s\mapsto F(s,x)$ is strictly monotonically increasing on $(0,\veps(x)]$. Note that we may choose $\veps(x)$ depending continuously on $x$. 
    Let $\delta_0(C,c,\hat{f}):= \min_{x\in \pr U} F(\veps(x), x)$ and set $\tilde{\veps}(x):= F^{-1}(\delta_0,x)\leq \veps(x)<\tfrac{c}{3}$. Then $s\mapsto F(s,x)$ maps $(0,\tilde{\veps}(x))$ bijectively onto $(0,\delta_0)$, so for any $t< \delta_0$  there exists a {\em unique} solution $\ta(t,x)\in [0,\tilde{\veps}(x))$ to the equation $\tfrac{1}{C} \hat{f}(\ta(t,x),x)=t$. Clearly $\ta \colon (0,\delta_0)\times \pr U \to (0, \tfrac{c}{3})$ satisfies $\lm{t}{0^+}\ta(t,x)=0$ for all $x\in \pr U$ an is strictly increasing in $t$. It remains to argue that $(t,x)\mapsto \ta(t,x)$ is smooth. For this we note that $\ta$ will agree with maps obtained from the local implicit function theorem near any $(t_0,x_0)\in (0,\veps_0)\times \pr U$: Since $G(\alpha,t,x):=F(\alpha, x)-t$ satisfies $G(\alpha_0,t_0,x_0)=0$ and $\pr_\alpha G|_{(\a_0,t_0,x_0)}>0$ for $\alpha_0:=\ta(t_0,x_0)\in (0,\tfrac{c}{3})$ there exists locally around $(t_0,x_0)$ a smooth map $(t,x)\mapsto \a(t,x)$ to a neighborhood of $\alpha_0\in (0,\tilde{\veps}(x_0))$ with $G(\a(t,x),t,x)=0$. So by the noted uniqueness of the solution $\ta(t,x)$ to $\tfrac{1}{C} \hat{f}(\ta(t,x),x)=t$, $\ta=\a$ on this neighborhood and smoothness of $\ta$ follows.    
\end{proof}

We now obtain $\a$ by interpolating between $\ta$ near $t=0$ and the identity near $t=\veps$.

\begin{lemma}\label{lem:alpha-props}
Fix $L>0$ and $0<\lambda <1$. There exists $\veps_0(L,\lambda,\pr U, \hat{f})>0$ such that for any $0<\veps<\veps_0$  there exists a $C^1$ function $\a: (0,\veps)\times \pr U\to (0,\veps)$ (depending on $\hat{f}$, $\veps$, $L$ and $\lambda$) 
such that
\begin{enumerate}
	\item for any $x\in \pr U$, $t\mapsto \a(t,x)$ is strictly increasing,
	\item $\hat{f}(\a(t,x),x)=\tfrac{2L}{\veps}\,t$ for $t$ near $0$,
	\item $\a(t,x)\to 0$ as $t\to 0^+$ and
	\item $\a(t,x)=t$ for $(t,x)\in (\tfrac{2\veps}{3},\veps)\times \pr U$.
\end{enumerate}
Note that (2) implies the following formulas for the derivatives  near $t=0$
\begin{align}
	 (\pr_t\hat{f})|_{(\a(t,x),x)}\;(\pr_t\a)|_{(t,x)}&=\tfrac{2L}{\veps}\\
	 (\pr_t\hat{f})|_{(\a(t,x),x)}\;(\pr_i\a)|_{(t,x)}+ (\pr_i\hat{f})|_{(\a(t,x),x)}&= 0.
\end{align}
In particular  $(\pr_t\a)(t,x), (\pr_i\a)(t,x) \to 0$ as $t\to 0^+$. 
Further, for any $(t,x)\in (0,\veps)\times \pr U$ we can guarantee either 
\begin{itemize}
	\item $\partial_t \hat{f}|_{(\a(t,x),x)} \,(\partial_t\alpha)|_{(t,x)}\geq 1$ and $|B(t,x)|_{\omega(\a(t,x),x)}\leq \tfrac{1}{V(r(\a(t,x),x))^2} (1-\sqrt{\lambda})$ or
\item 	$\partial_t\alpha|_{(t,x)}\geq 1$ and $|A(t,x)|_{\omega(\a(t,x),x)}\leq 1-\sqrt{\lambda}$
\end{itemize} 
where $B(t,x), A(t,x)$ are as in \eqref{eq:defB} resp.\ \eqref{eq:defA}.
\end{lemma}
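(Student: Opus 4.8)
The plan is to build $\a$ on $(0,\veps)\times\pr U$ by gluing three pieces: the map $\ta$ supplied by Lemma~\ref{lemma-alpha-x} near $t=0$, the identity $\a(t,x)=t$ near $t=\veps$, and a convex interpolation of the two on a middle band. First I would apply Lemma~\ref{lemma-alpha-x} with $C=\tfrac{2L}{\veps}$ and with $c$ a fixed constant in $(0,\epsilon_*)$, obtaining $\ta$ on $(0,\delta_0)\times\pr U$ solving $\hat f(\ta(t,x),x)=\tfrac{2L}{\veps}t$; one checks from the proof of Lemma~\ref{lemma-alpha-x} that $\delta_0=\delta_0(\tfrac{2L}{\veps},c,\hat f)\ge c_3\veps$ for a constant $c_3>0$, since the monotonicity threshold used to define it does not see the scaling $C$. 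Then I would fix small constants $0<c_1<c_2$ (depending only on $L,\lambda,\pr U,\hat f$), put $t_1:=c_1\veps$, $t_2:=c_2\veps$, pick a nondecreasing $C^\infty$ cutoff $\chi\colon\R\to[0,1]$ with $\chi\equiv0$ on $(-\infty,t_1]$ and $\chi\equiv1$ on $[t_2,\infty)$, and set
\[
\a(t,x):=\begin{cases}
\ta(t,x)&\text{if }t\in(0,t_1],\\
(1-\chi(t))\,\ta(t,x)+\chi(t)\,t&\text{if }t\in[t_1,t_2],\\
t&\text{if }t\in[t_2,\veps).
\end{cases}
\]
At the end $\veps_0$ will be shrunk so that for $\veps<\veps_0$ one has $\tfrac{2L}{\veps}\ge1$, $t_2<\min(\tfrac{2\veps}{3},\delta_0)$ (so the pieces overlap correctly and (4) holds), $\ta(t,x)<t$ on the band (true once $\veps$ is small since $\hat f(t,x)/t\to\infty$ as $t\to0^+$), and the slope estimate below holds.

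The quantitative input that makes this work, and that dictates the choice of $c_1,c_2,\veps_0$, is the behaviour of $\hat f$ on the set $R:=\{(\ta(t,x),x):t\in[t_1,t_2],\,x\in\pr U\}$. Since $\hat f(\ta(t,x),x)=\tfrac{2L}{\veps}t$ lies in the fixed interval $[2Lc_1,2Lc_2]$ on the band, $R$ is a compact subset of $(0,\tfrac c3)\times\pr U$ that is bounded away from $\{0\}\times\pr U$ but collapses onto it as $c_1,c_2\to0$; hence $\pr_t\hat f$ is bounded above on $R$ by some $M^*=M^*(c_1,c_2,\hat f)<\infty$, and, because $\pr_t\hat f\to\infty$ as one approaches $\pr U$, it is bounded below on $R$ by some $M_{**}=M_{**}(c_1,c_2,\hat f)$ that can be made as large as we like by shrinking $c_1,c_2$. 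Differentiating the defining relation for $\ta$ gives $\pr_t\ta=\tfrac{2L/\veps}{\pr_t\hat f(\ta,x)}$ and $\pr_i\ta=-\tfrac{\pr_i\hat f(\ta,x)}{\pr_t\hat f(\ta,x)}$, so on $[t_1,t_2]\times\pr U$ we get $\pr_t\ta\ge\tfrac{2L/\veps}{M^*}\ge1$ as soon as $\veps<\veps_0\le\tfrac{2L}{M^*}$, and $|\pr_i\ta|_e\le\tfrac{P}{M_{**}}$, where $P$ bounds the tangential $b$-gradient of $f$ near $\pr U$ (finite by condition (6) of Definition~\ref{def-Gn2}). Choosing $c_1,c_2$ small enough makes $\tfrac{P}{M_{**}}\le\tfrac{1-\sqrt\lambda}{\sqrt{C_\omega}}$, where $C_\omega$ bounds $\omega^{-1}$ on the relevant compact neighbourhood of $\pr U\times\{0\}$.

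Granting this, properties (1)--(4) and the derivative formulas reduce to the chain rule: (2) holds on $(0,t_1)$ by construction and gives the two displayed identities upon differentiating $\hat f(\a,x)=\tfrac{2L}{\veps}t$ in $t$ and in $x^i$, whence $\pr_t\a,\pr_i\a\to0$ as $t\to0^+$ since $\pr_t\hat f(\a,x)\to\infty$; (4) holds because $t_2<\tfrac{2\veps}{3}$; (3) is immediate; and $C^1$-matching at $t_1,t_2$ is clear since $\chi,\chi'$ vanish there from one side and $\chi\equiv1,\chi'\equiv0$ from the other. Monotonicity follows from $\pr_t\ta>0$ on $(0,t_1]$, from
\[
\pr_t\a=\chi'(t)\,(t-\ta(t,x))+(1-\chi(t))\,(\pr_t\ta)(t,x)+\chi(t)\ \ge\ (1-\chi(t))\cdot1+\chi(t)=1
\]
on $[t_1,t_2]$ (using $\ta<t$, $\chi'\ge0$ and $\pr_t\ta\ge1$), and from $\pr_t\a=1$ on $[t_2,\veps)$. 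For the dichotomy: on $(0,t_1]$ we have $\a=\ta$, hence $B\equiv0$ (this is exactly the second derivative identity) and $\pr_t\hat f(\a,x)\,\pr_t\a=\tfrac{2L}{\veps}\ge1$, which is the first alternative; on $[t_2,\veps)$ we have $\a=t$, hence $A\equiv0$ and $\pr_t\a=1$, which is the second alternative; and on the band $[t_1,t_2]$ the same computation gives $\pr_t\a\ge1$, while $\pr_i\a=(1-\chi)\pr_i\ta$ gives $|A|_{\omega(\a,x)}\le\sqrt{C_\omega}\,|\pr_i\ta|_e\le1-\sqrt\lambda$, again the second alternative.

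The crux — and the only place where anything subtle happens — is the middle band. There I need $\pr_t\a\ge1$ in order to land in the \emph{second} alternative, where $|A|$ is harmless once $\ta$ is small; but $C^1$-matching with $\ta$ at $t_1$ forces $\pr_t\ta$ itself to be $\ge1$ at $t_1$, which looks impossible because $\pr_t\ta\to0$ as $t\to0^+$. The resolution is precisely the rescaling: since $C=\tfrac{2L}{\veps}\to\infty$, the slope $\pr_t\ta=\tfrac{2L/\veps}{\pr_t\hat f}$ \emph{does} exceed $1$ once $t$ is of order $\veps$ rather than of order $1$, and this is compatible with keeping $\ta$ — hence $|\pr_i\ta|$ — small provided the band is taken to be $[c_1\veps,c_2\veps]$ with $c_1,c_2$ small but fixed. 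Getting the order of quantifiers right (fix $c_1,c_2$ from $\lambda,\pr U,\hat f,L$; then shrink $\veps_0$), verifying that $R$ really is compact and bounded away from $\{0\}\times\pr U$ so that $\pr_t\hat f$ is two-sidedly bounded on it, and tracking the constants $M^*,M_{**},P,C_\omega$ is the entire content; the rest is bookkeeping, parallel to Lemma~\ref{lemma-alpha-x} and to \cite{HLP}.
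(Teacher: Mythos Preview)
Your construction has a genuine gap hidden in the claim ``$\ta(t,x)<t$ on the band (true once $\veps$ is small since $\hat f(t,x)/t\to\infty$ as $t\to 0^+$).'' This is false, and in fact the opposite inequality holds. On the band you have $\hat f(\ta(t,x),x)=\tfrac{2L}{\veps}t\in[2Lc_1,2Lc_2]$, so $\ta(t,x)=\hat f^{-1}(\tfrac{2L}{\veps}t,x)$ lies in the fixed interval $[\hat f^{-1}(2Lc_1,x),\hat f^{-1}(2Lc_2,x)]$, which is independent of $\veps$ (this is precisely what makes your set $R$ compact and bounded away from $\{0\}\times\pr U$). But then $\a(t_1,x)=\ta(t_1,x)=\hat f^{-1}(2Lc_1,x)$ is a fixed positive number, while $\a(t_2,x)=t_2=c_2\veps\to 0$; hence for small $\veps$ you have $\a(t_1,x)>\a(t_2,x)$, so $\a$ is not increasing, and its image is not contained in $(0,\veps)$. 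Your monotonicity computation $\pr_t\a\ge 1$ also fails because the term $\chi'(t)(t-\ta)$ is negative. Said differently: the condition $\ta(t,x)<t$ is equivalent to $\hat f(t,x)>\tfrac{2L}{\veps}t$, i.e.\ $\hat f(c_1\veps,x)>2Lc_1$; as $\veps\to 0$ the left side goes to $0$ while the right side is fixed.

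The paper's approach avoids this by reversing the roles: it fixes $\veps$ first and then chooses the gluing height $\xi$ small enough (depending on $\veps$) so that $\pr_t\ta<1$ on $(0,\xi)$, which forces $\ta(\xi,x)<\xi$ and makes a piecewise linear interpolation to the identity have slope $\ge 1$ automatically. The price is that the resulting $\a$ is only piecewise $C^1$ and must be smoothed at the two corners, with some care to preserve the $A$/$B$ estimates. Your attempt to get a directly $C^1$ map by arranging $\pr_t\ta\ge 1$ on the band is incompatible with the range condition $\a\colon(0,\veps)\to(0,\veps)$ once $c_1,c_2$ are fixed before $\veps$.
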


\begin{proof} 
Choose $\veps_0(L,\lambda,\pr U, \hat{f})<\min(\epsilon_*,2L)$ such that  $\hat{f}$ satisfies $\pr_t\hat{f}>1$ on $(0,\veps_0)\times \pr U$ and  \begin{align}\label{eq:Aesttobe}
		\Big|\omega^{ij}(s,x) \tfrac{1}{(\pr_t\hat{f})|_{(t,x)}} (\pr_i\hat{f})|_{(t,x)}\Big|_{\omega(s,x)}< 1-\sqrt{\lambda}
	\end{align} for any $(s,t,x)\in (0,\veps_0)\times (0,\veps_0)\times \pr U$. 
	Fix any $0<\veps<\veps_0$. Then, by Lemma \ref{lemma-alpha-x}  there exists a $\delta_0(\tfrac{2L}{\veps},\veps)$ and a $\ta : (0,\delta_0)\times \pr U \to (0,\tfrac{\veps}{3})$ which is smooth, satisfies (2) and is strictly increasing in $t$.
	
	For any $\xi \in (0,\min(\delta_0,\tfrac{\veps}{3}))$ small enough such that $(\pr_t\ta)(t,x)=\tfrac{2L}{\veps}(\pr_t\hat{f}(\ta(t,x),x))^{-1})<1$ for all $(t,x)\in (0,\xi)\times \pr U$  we set
$$		\a(t,x)=\begin{cases*} \ta(t,x) \quad\quad\quad\quad \quad\quad\quad\quad\quad\quad \quad \mathrm{on} \;\; (0,\xi]\times \pr U \\
			\tfrac{\veps-2\ta(\xi,x)}{\veps-2\xi}\,(t-\xi)+\ta(\xi,x) \quad\quad \mathrm{on} \;\;  [\xi,\tfrac{\veps}{2}]\times \pr U \\
		 t \quad\quad\quad\quad\quad\quad\quad\quad\quad\quad\quad\quad\quad\; \mathrm{on} \;\;  [\tfrac{\veps}{2},\veps)\times \pr U,
	\end{cases*} $$
	where the middle segment is a simple linear interpolation. This is piecewise $C^1$ and smooth in $x$ for any fixed $t$ with $(t,x)\mapsto (\pr_i\a)(t,x)$ continuous. Further, the slope of the linear segments is $\geq 1$ by construction since $\ta(\xi,x)<\xi$. Assumption \eqref{eq:Aesttobe} ensures $|A(t,x)|_{\omega(\a(t,x),x)}< 1-\sqrt{\lambda}$ for all $(t,x)\in (0,\veps)\times \pr U$. (We will skip the detailed computations here, let us however remark that we get that  $|A(t,x)|_{\omega}$ is given by the left hand side of \eqref{eq:Aesttobe} for $t\in (0,\xi)$, while on the first linear segment $|A(t,x)|_{\omega}$ is essentially given by \eqref{eq:Aesttobe}$|_{t=\xi}\cdot \tfrac{\veps-2t}{\veps-2\xi} $, 
		and on the second linear segment $A(t,x)=0$.) On $(0,\xi]$ on the other hand, $B(t,x)=0$ (note that $B$ is continuous on $(0,\veps)$) and $\partial_t \hat{f}|_{(\a(t,x),x)} \,(\partial_t\alpha)|_{(t,x)}=\tfrac{2L}{\veps}> 1$. So this $\a$ satisfies all our requirements except only being piecewise $C^1$. To make it $C^1$ we only need to modify it in arbitrarily small neighborhoods of the corners at $\{\xi\}\times \pr U$ and $\{\tfrac{\veps}{2}\}\times \pr U$ (and join the modified bits to the original $\alpha$ using a partition of unity). At both corners the $C^1$ versions of $\a$ can be created via convolution in $t$ while fixing $x$. At the second corner this clearly preserves  $\pr_t\a \geq 1$ and $|A(t,x)|_{\omega(\a(t,x),x)}< 1-\sqrt{\lambda}$. For the second corner note that $\hat{f},\omega,V$ are smooth near $\{\xi \}\times \pr U$ and $\pr_i$ commutes with the smoothing of $\a$ in $t$, so we can use $B=0$ on $\{\xi\}\times \pr U$ to keep $V^2\,|B|_{\omega}$ arbitrarily small, in particular smaller than $1-\sqrt{\lambda}$, by making the neighborhood of $\{\xi\}\times \pr U$ on which we modify small enough. Secondly, since we have $\pr_t\hat{f}>1$ on $(0,\veps)$ and $\pr_t\a \geq 1$ for $t>\xi$ it follows that $\partial_t \hat{f}|_{(\a(t,x),x)} \,(\partial_t\alpha)|_{(t,x)}>1$ on $(0,\xi)$ and $(\xi,\veps)$, so we may use smoothness of $\hat{f}$ near $\{\xi\}\times \pr U$ and boundedness of $\pr_t\a$ to ensure that also the smoothed $\alpha$ satisfies $\partial_t \hat{f}|_{(\a(t,x),x)} \,(\partial_t\alpha)|_{(t,x)}>1$ provided we make the neighborhood of $\{\xi\}\times \pr U$ on which we modify small enough.  
\end{proof}

\begin{lemma}\label{lem:beta} 
Fix $L>0$, $0<\lambda <1$ and the capping function $f_c$ constructed in Section \ref{subsec:cappingFunc}. There exists $\veps_0(L,\lambda,\pr U, \hat{f}_c)>0$ such that for any $0<\veps<\veps_0$  there exists a $C^1$ function $\a_c: (-\veps,-\tfrac{\veps}{2}) \times \pr U\to (-\veps, 0)$ (depending on $\hat{f}_c$, $\veps$, $L$ and $\lambda$) 
	such that
		\begin{enumerate}
			\item for any $x\in \pr U$, $t\mapsto \a_c(t,x)$ is strictly increasing,
			\item $\hat{f}_c(\a_c(t,x),x)=\tfrac{2L}{\veps}\,(t+\tfrac{\veps}{2})$ for $t$ near $-\tfrac{\veps}{2}$,
			\item $\a_c(t,x)\to 0$ as $t\to -\tfrac{\veps}{2}^-$,
			\item $\a_c(t,x)=t$ for $(t,x)\in (-\veps,-\tfrac{5\veps}{6})\times \pr U$
		\end{enumerate}
		Note that (2) implies the following formulas for the derivatives near $t=-\tfrac{\veps}{2}$
		\begin{align}
			(\pr_t\hat{f}_c)|_{(\a_c(t,x),x)}\;(\pr_t\a_c)|_{(t,x)}&=\tfrac{2L}{\veps}\\
			(\pr_t\hat{f}_c)|_{(\a_c(t,x),x)}\;(\pr_i\a_c)|_{(t,x)}+ (\pr_i\hat{f}_c)|_{(\a_c(t,x),x)}&= 0.
		\end{align}
		In particular  $(\pr_t\a_c)(t,x), (\pr_i\a_c)(t,x) \to 0$ as $t\to -\tfrac{\veps}{2}^-$. Further, for any $(t,x)\in (-\veps, -\tfrac{\veps}{2})\times \pr U$ we can guarantee either 
		\begin{itemize}
			\item $\partial_t \hat{f}_c|_{(\a_c(t,x),x)} \,(\partial_t\a_c)|_{(t,x)}\geq 1$ and $|B_{\a_c}(t,x)|_{\omega(\a_c(t,x),x)}\leq \tfrac{1}{V(r(\a_c(t,x),x))^2} (1-\sqrt{\lambda})$ or
			\item 	$\partial_t\a_c|_{(t,x)}\geq 1$ and $|A_{\a_c}(t,x)|_{\omega(\a_c(t,x),x)}\leq 1-\sqrt{\lambda}$
		\end{itemize} 
		where $B_{\a_c}(t,x), A_{\a_c}(t,x)$ are as in \eqref{eq:defB} resp.\ \eqref{eq:defA}, but with $\a$ replaced by $\a_c$ and $\hat f$ replaced by $\hat f_c$.
\end{lemma}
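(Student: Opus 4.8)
The plan is to repeat the proof of Lemma \ref{lem:alpha-props} almost verbatim, interpolating between an implicit--function--type rescaling near $t=-\tfrac{\veps}{2}$ and the identity near $t=-\veps$; the local construction replacing Lemma \ref{lemma-alpha-x} can either be redone for $\hat{f}_c$ or obtained from Lemma \ref{lemma-alpha-x} by the substitution $t\mapsto -t-\tfrac{\veps}{2}$, the relevant blow--up $\pr_t\hat{f}_c\to\infty$ as $t\to 0^-$ being built into the capping function $\chi$ of Section \ref{subsec:cappingFunc}. The one genuine simplification compared with $\a$ is that, by construction, $\hat{f}_c(t,x)=\chi(t)$ is \emph{independent of} $x$ for $t\in(-\epsilon_*,0)$; I would therefore build $\a_c$ to be independent of $x$ as well, so that $\pr_i\a_c\equiv 0$ and hence (since also $\pr_i\hat{f}_c\equiv 0$) $A_{\a_c}\equiv B_{\a_c}\equiv 0$ on all of $(-\veps,-\tfrac{\veps}{2})\times\pr U$. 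This makes the stated dichotomy hold automatically through its first alternative, and reduces everything to controlling $\pr_t\a_c$ and $\pr_t\hat{f}_c|_{(\a_c,x)}\,\pr_t\a_c$. For the choice of $\veps_0$ I would fix $\delta_\chi\in(0,\tfrac{\epsilon_*}{2}]$ with $\chi'>1$ on $(-\delta_\chi,0)$ (possible because $\chi'(t)\to\infty$ as $t\to 0^-$) and set $\veps_0:=\min(\epsilon_*,2L,\delta_\chi)$; the formal dependence on $\lambda$ and $\pr U$ is kept only for uniformity with Lemma \ref{lem:alpha-props}.

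With $0<\veps<\veps_0$ fixed, since $\chi$ is smooth and strictly increasing on $(-\delta_\chi,0]$ the map $\ta_c(t):=\chi^{-1}\!\big(\tfrac{2L}{\veps}(t+\tfrac{\veps}{2})\big)$ is smooth, strictly increasing, has image in $(-\delta_\chi,0)$ on a small left neighbourhood of $-\tfrac{\veps}{2}$, satisfies $\hat{f}_c(\ta_c(t),x)=\tfrac{2L}{\veps}(t+\tfrac{\veps}{2})$ and $\ta_c(t)\to 0$ as $t\to-\tfrac{\veps}{2}^-$; differentiating gives $\pr_t\hat{f}_c|_{(\ta_c(t),x)}\,\pr_t\ta_c(t)=\tfrac{2L}{\veps}$, hence $\pr_t\ta_c(t)=\tfrac{2L}{\veps\,\chi'(\ta_c(t))}\to 0$ and $\pr_i\ta_c\equiv 0$. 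Next I would pick $\xi\in(0,\min(\tfrac{\veps}{3},\tfrac{\veps}{2L}))$ small enough that the corner value $a_\xi:=\ta_c(-\tfrac{\veps}{2}-\xi)$ lies in $(-\tfrac{\veps}{3},0)$ and $\pr_t\ta_c<1$ on $(-\tfrac{\veps}{2}-\xi,-\tfrac{\veps}{2})$ — both hold once $\xi$ is small, since $a_\xi\to 0$ as $\xi\to 0^+$ — and define $\a_c$ to equal the identity on $(-\veps,-\tfrac{5\veps}{6}]$, the affine map carrying $-\tfrac{5\veps}{6}\mapsto-\tfrac{5\veps}{6}$ and $-\tfrac{\veps}{2}-\xi\mapsto a_\xi$ on $[-\tfrac{5\veps}{6},-\tfrac{\veps}{2}-\xi]$ (whose slope is $\ge 1$ because $a_\xi>-\tfrac{\veps}{3}$), and $\ta_c$ on $[-\tfrac{\veps}{2}-\xi,-\tfrac{\veps}{2})$. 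This $\a_c$ is continuous, piecewise $C^1$, strictly increasing in $t$, independent of $x$, satisfies (1)--(4), and has $\pr_t\a_c=1$, $\pr_t\a_c\ge 1$, resp.\ $\pr_t\hat{f}_c|_{(\a_c,x)}\,\pr_t\a_c=\tfrac{2L}{\veps}$ on the three pieces. Finally I would upgrade it to $C^1$ by convolving in $t$ (with $x$ frozen) in arbitrarily small neighbourhoods of the two corners $\{-\tfrac{5\veps}{6}\}\times\pr U$ and $\{-\tfrac{\veps}{2}-\xi\}\times\pr U$ and gluing back via a partition of unity; this preserves monotonicity and $x$--independence, so $A_{\a_c}\equiv B_{\a_c}\equiv 0$ persists and $\pr_t\a_c,\pr_i\a_c\to 0$ as $t\to-\tfrac{\veps}{2}^-$ are unaffected.

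The only delicate point — exactly as in Lemma \ref{lem:alpha-props} — is that the smoothed $\a_c$ must still satisfy the dichotomy near the corners, and here this is painless: because the entire image of $\a_c$ lies in $(-\veps,0)\subset(-\delta_\chi,0)$ we have $\pr_t\hat{f}_c|_{(\a_c,x)}=\chi'(\a_c)>1$, while $\pr_t\a_c$ equals $\tfrac{2L}{\veps}>1$ on the $\ta_c$--piece (using $\veps<2L$) and is $\ge 1$ on the two straight pieces, so $\pr_t\hat{f}_c|_{(\a_c,x)}\,\pr_t\a_c$ is bounded away from $1$ on each piece away from the corners; by continuity of $\pr_t\hat{f}_c$ and boundedness of $\pr_t\a_c$ a sufficiently small smoothing keeps $\pr_t\hat{f}_c|_{(\a_c,x)}\,\pr_t\a_c>1$, which together with $B_{\a_c}\equiv 0$ yields the first alternative of the dichotomy everywhere. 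The derivative identities near $t=-\tfrac{\veps}{2}$ are read off from (2) by differentiation, and are what guarantee the $C^1$--matching of \eqref{eqn:phicap} with \eqref{eqn:phicyl} as claimed in the main text; the metric estimate $g(\Phi^\lambda_*(u),\Phi^\lambda_*(u))\ge\lambda\,(E^*b)(u,u)$ on $(-\veps,-\tfrac{\veps}{2})\times\pr U$ then follows by rerunning the computation performed on $(0,\veps)\times\pr U$ with $\hat{f},\a$ replaced by $\hat{f}_c,\a_c$, using $V\ge 1$, Cauchy--Schwarz and \eqref{eq:vepscondition}. Thus the main (and essentially only) obstacle, the corner bookkeeping, is dissolved by the choice $\veps_0\le\min(2L,\delta_\chi)$ together with the $x$--independence of $\hat{f}_c$.
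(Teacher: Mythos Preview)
Your proof is correct and actually cleaner than the paper's. The paper dispatches the lemma in one line by reflecting: it sets $\hat{f}_c^*(t,x):=-\hat{f}_c(-t,x)$, applies Lemma~\ref{lem:alpha-props} to $\hat{f}_c^*$ to obtain an $\alpha$, and then defines $\alpha_c$ by a change of variables pulling $\alpha$ back to $(-\veps,-\tfrac{\veps}{2})$. You instead redo the construction from scratch but exploit the key structural fact that $\hat{f}_c(t,x)=\chi(t)$ is $x$-independent, so $\alpha_c$ may be taken $x$-independent and hence $A_{\alpha_c}\equiv B_{\alpha_c}\equiv 0$ identically; this collapses the dichotomy to its first alternative and removes all of the delicate $|A|_\omega,|B|_\omega$ bookkeeping from Lemma~\ref{lem:alpha-props}. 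What the paper's route buys is brevity and a formal reduction; what yours buys is transparency and a genuinely simpler smoothing step (no need to control $B$ near the corners).

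One cosmetic point: you place the first corner of the piecewise definition at $t=-\tfrac{5\veps}{6}$, which is the boundary of the interval on which (4) demands $\alpha_c(t)=t$; mollifying in a two-sided neighbourhood of that corner would then perturb $\alpha_c$ on part of $(-\veps,-\tfrac{5\veps}{6})$. Either smooth only on the affine side (the identity piece is already smooth, so a one-sided $C^1$ match suffices), or, mirroring Lemma~\ref{lem:alpha-props} where the corner sits at $\tfrac{\veps}{2}$ strictly inside the identity region $(\tfrac{2\veps}{3},\veps)$, place your corner at some $t_0\in(-\tfrac{5\veps}{6},-\tfrac{\veps}{2}-\xi)$, e.g.\ $t_0=-\tfrac{3\veps}{4}$; the slope condition $a_\xi>-\tfrac{\veps}{3}$ still forces the affine slope to exceed $1$, and (4) survives any sufficiently small smoothing.
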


\begin{proof}
	The function $\hat{f}_c^*: (t,x)\mapsto -\hat{f}_c(-t,x)$ has the same behavior near $t=0$ as $\hat{f}$ (in particular $\hat{f}_c^*(0,x)=0$ and its gradient behaves appropriately as $t\to 0^+$), so by Lemma \ref{lem:alpha-props} there exists a $\veps_0(L,\lambda,\pr U, \hat{f}^*_c)$ such that for any $0<\veps<\veps_0(L,\lambda,\pr U, \hat{f}^*_c)$ there exists a suitable $\a:(0,\veps)\times \pr U\to (0,\veps)$ corresponding to $\hat{f}_c^*$. We set $\a_c (t,x)= -\a(-2t-\tfrac{\veps}{2},x) $ for $(t,x)\in (-\veps,-\tfrac{\veps}{2})\times \pr U$ and the claims follow.
\end{proof}

\subsection{Volume and diameter estimates}\label{ssec-ap-voldiam}

We start by estimating $\vol( \widetilde{\Omega} \setminus \Omega(\r))$. First, 
\bee
 \vol( \widetilde{\Omega} \setminus \Omega(\r))=\vol(\graph(f_c))+\vol( \partial U \times(-L, 0)).
  \eee
We obtain the following estimate using the coarea formula as in Lemma \ref{lem-volO-},
\begin{align*}
	\vol ( \graph(f_c) ) &= \int_U \sqrt{1+ V^2 |\nabla^b f_c |^2}  \, \dvol_b \\
	&\leq \vol(U) + \cosh(\tfrac{\rho_0}{2})\int_{-1}^0 \H^{n-1}(f_c^{-1}(t)) \, dt \\
	&\leq (\tfrac{1}{n-1} + 2\cosh(\tfrac{\rho_0}{2})) \vol(\pr U),
\end{align*}
where we have also used the isoperimetric inequality given in Proposition \ref{prop-IsoIneq} and that by construction the level sets of $f_c$ satisfy $\H^{n-1}(f_c^{-1}(s))\leq 2\vol(\pr U)$ for $s\in (-1,0]$. 
To estimate $\vol( \partial U \times(-L, 0)   )$ we use that $g=b|_{\partial U} + V(r)^2\, ds^2$ on the cylinder, so $\vol( \partial U \times(-L, 0)   )\leq L\,\vol(\partial U)\,\cosh(\tfrac{\rho_0}{2}) $ and the desired estimate for $\vol( \widetilde{\Omega} \setminus \Omega(\r))$ follows.\\

To estimate $\diam( \widetilde{\Omega} \setminus \Omega(\r)) $ note that 
\begin{align*}
\diam( \widetilde{\Omega} \setminus \Omega(\r)))\leq  & \diam(\graph(f_c))+2 \sup_{p\in \partial U \times(-L, 0)} d_{\mathrm{Cyl}}(p,\pr U) \\
\leq & \diam(\graph(f_c))+2 \cosh(\tfrac{\rho_0}{2}) \,L 
\end{align*}
 as the metric on the cylinder part is $b|_{\partial U}+ V^2ds^2$ and, again, $V$ is bounded  above by $\cosh(\tfrac{\rho_0}{2})$ on $\overline{U}$. To estimate the diameter of $\graph(f_c)$, we go back to the explicit construction of $\hat{f}_c$ in normal exponential coordinates (see Section \ref{subsec:cappingFunc}). Fix $x\in \pr U$. 
Set $p=(E(0,x), 0)\in \Hb^{n+1}$, $q:=(E(-\tfrac{\veps_*}{2},x), -1) \in \graph(f_c)\subset \Hb^{n+1}$. Then the curve $\gamma: t\mapsto (E(t,x), \chi(t))$ for $t\in [-\tfrac{\veps_*}{2},0]$ connects $q$ to $p$ and has length 
\begin{align*}
L(\gamma) = & \int_{-\tfrac{\veps_*}{2}}^{0} \sqrt{1 + V(r(E(t,x)))^2 (\chi')^2} dt  \\
\leq & \int_{-\tfrac{\veps_*}{2}}^{0} (1+\cosh(\tfrac{\rho_0}{2})\chi'(t))dt \\
 = & \tfrac{\veps_*}{2}+\cosh(\tfrac{\rho_0}{2})\leq \tfrac{1}{2}+\cosh(\tfrac{\rho_0}{2})
\end{align*}
So  
\bee 
\max_{x\in \pr U}  d_{\graph(f_c)}\Big(  (E(0,x), 0), E(-\tfrac{\veps_*}{2},\pr U) \times \{-1 \}  \Big) 
\leq \tfrac{1}{2}+\cosh(\tfrac{\rho_0}{2}).
\eee 
Thus, using star-shapedness of $ E(-\tfrac{\veps_*}{2},\pr U)$, we readily see that 
\begin{align*}
\diam(\graph(f_c)) \leq &
\tfrac{1}{2}+\cosh(\tfrac{\rho_0}{2})+\diam \{  (r,\theta)\in \Hb^n: \theta\in \bS^{n-1}, 0\leq r\leq \rho_{-\tfrac{\veps_*}{2}}(\theta)\leq \tfrac{\rho_0}{2} \}\\
\leq & \tfrac{1}{2}+\cosh(\tfrac{\rho_0}{2})+\rho_0
\end{align*} 
establishing the desired diameter bound.\\

Finally, to show that for $L> D+\tfrac{1}{2}\sinh(\rho_0)\,\pi \sqrt{1+\gamma^2}$,  
$(\Omega(\rho), \di_{\Omega(\rho)})$  embeds in a distance preserving way into $(\widetilde \Omega, d_{\widetilde \Omega})$
take any two points in $p,q \in \Omega(\rho)$ and a piece-wise Lipschitz curve $ c:[0,1] \to \widetilde \Omega$  joining $p$ to $q$. 
Assume the curve exits $\Omega(\rho)$ at $c(s_0)\in \pr U$  and reenters $\Omega(\rho)$ at $c(s_1)\in \pr U$, if it does not enter $\graph(f_c)$ for any $s\in [s_0,s_1]$, i.e., it remains in the cylinder, then replacing the segment $c|_{[s_0,s_1]}$ by a curve contained in $\partial U\subset \Omega(\rho)$ connecting $p_0=c(s_0)$  to $p_1=c(s_1)$
will clearly yield a shorter curve. If, on the other hand, $c$ enters $\graph(f_c)$ at any $s\in [s_0,s_1]$, then $L(\gamma|_{[s_0,s_1]})>2\cosh(\rho_{min}) L\geq 2L $. We will now construct a curve connecting $c(s_0)$ to $c(s_1)$ that is entirely contained in $\Omega(\r)$ which will have length at most $2D+\sinh(\rho_0)\,\pi \sqrt{1+\gamma^2}$ (with $D$ from Definition \ref{def-Gn2}), i.e., it will be shorter than the original curve if $L> D+\tfrac{1}{2}\sinh(\rho_0)\,\pi \sqrt{1+\gamma^2}$ (cf.\ the proof of the diameter estimates for $\Omega_j(\rho)$ in Theorem \ref{thm-IFbounds}): 
 First, follow the graph of $f$ radially outward from $c(s_0)$ (resp. $c(s_1)$) until hitting $\partial \Omega(\rho_0)$ in a point $p_0$ (resp. $p_1$). Then by Definition \ref{def-Gn2} the length of each of these segments is bounded by the depth $D$. Second, connect $p_0$ and $p_1$ along an arc in $\pr \Omega(\rho_0)$. Since $V^2|\grad^b f|^2 \leq \g^2$ in the region $r\geq \tfrac{\rho_0}{2}$ by Definition \ref{def-Gn2} the length of this arc is bounded by $\sinh(\rho_0)\,\pi \sqrt{1+\gamma^2}$.

\small{
\bibliographystyle{amsalpha}
\bibliography{IFPMTAH}
}

%%%%%%%%%%%%%%%%%%%%%%%%%%%%%%%%%%%%%%%%%
%%%%%%%%%%%%%%%%%%%%%%%%%%%%%%%%%%%%%%%%
%%%%%%%%%%%%%%%%%%%%%%%%%%%%%%%%%%%%%%%%%%%
%%%%%%%%%%%%%%%%%%%%%%%%%%%%%%%%%%%%%%%%%%%%
\end{document}